\numberwithin{equation}{section}
\theoremstyle{plain}
\long\def\comment#1{}
\long\def\comment#1{}
\newtheorem{theorem}{Theorem}
\newtheorem{lemma}{Lemma}
\theoremstyle{definition}
\newtheorem{remark}{Comment}[section]
\newtheorem{example}{Example}
\newcommand{\ceil}[1]{\left\lceil #1 \right\rceil}
\newcommand{\be}{\begin{eqnarray}}
\newcommand{\ee}{\end{eqnarray}}
\newcommand{\underf}{\underline{f}}
\newcommand{\uglyf}{\underf}
\newcommand{\uglyfh}{\underf}
\newcommand{\bm}{\theta_m}
\newcommand{\emperror}{\epsilon}
\newcommand{\ba}{\begin{array}}
\newcommand{\ea}{\end{array}}
\newcommand{\bs}{\begin{align}\begin{split}\nonumber}
\newcommand{\bsnumber}{\begin{align}\begin{split}}
\newcommand{\es}{\end{split}\end{align}}
\renewcommand{\(}{\left(}
\renewcommand{\)}{\right)}
\renewcommand{\[}{\left[}
\renewcommand{\]}{\right]}
\renewcommand{\hat}{\widehat}
\newcommand{\Gn}{\mathbb{G}_n}
\newcommand{\Pn}{\mathbb{P}_n}
\newcommand{\En}{{\mathbb{E}_n}}
\newcommand{\Ep}{\mathrm{E}}
\newcommand{\W}{W_{\mathcal{U}}}
\newcommand{\sign}{ {\rm sign}}
\def\RR{{\rm I\kern-0.18em R}}
\def\supp{{\rm support}}
\def\mmu{{\mu}}
\def\nn{{m_0}}
\def\hs{{\widehat s}}
\newcommand{\mA}{\mathcal{A}}
\newcommand{\mB}{\mathcal{B}}
\newcommand{\mC}{\mathcal{C}}
\newcommand{\bG}{\mathbb{G}}
\newcommand{\semin}[1]{\varphi_{{\rm min}}(#1)}
\newcommand{\semax}[1]{\varphi_{{\rm max}}(#1)}
\begin{document}

\begin{frontmatter}
\title{$\ell_1$-Penalized Quantile Regression in
High-Dimensional Sparse Models\protect}

\begin{aug}
\author{\fnms{Alexandre } \snm{Belloni}
\ead[label=e1]{abn5@duke.edu}}
\and \author{\fnms{Victor} \snm{Chernozhukov}\thanksref{t1,t2}
\ead[label=e2]{vchern@mit.edu}}


\thankstext{t1}{First version: December, 2007, This version: \today.}
\thankstext{t2}{The authors gratefully acknowledge research support from the National Science Foundation.}

\affiliation{Duke University and Massachusetts Institute of Technology}


\end{aug}

\begin{abstract}
%

We consider median regression and, more generally, a possibly infinite collection of quantile regressions in high-dimensional sparse models. In these models the number of regressors $p$ is very large, possibly larger than the sample size $n$, but only at most $s$ regressors have a non-zero impact on each conditional quantile of the response variable, where $s$ grows more slowly than $n$.  Since ordinary quantile regression is not consistent in this case, we consider $\ell_1$-penalized quantile regression ($\ell_1$-QR), which penalizes the $\ell_1$-norm of regression coefficients, as well as the post-penalized QR estimator (post-$\ell_1$-QR), which applies ordinary QR to the model selected by $\ell_1$-QR. First, we show that under general conditions $\ell_1$-QR is consistent at the near-oracle rate $\sqrt{s/n} \sqrt{\log (p \vee n)}$, uniformly in the compact set $\mathcal{U} \subset (0,1)$ of quantile indices.  In deriving this result, we propose a partly pivotal, data-driven choice of the penalty level and show that it satisfies the requirements for achieving this rate. Second, we show that under similar conditions post-$\ell_1$-QR is consistent at the near-oracle rate $\sqrt{s/n} \sqrt{\log (p\vee n)}$,  uniformly over $\mathcal{U}$, even if the $\ell_1$-QR-selected models miss some
components of the true models, and the rate could be even closer to the oracle rate otherwise.  Third, we characterize conditions under which $\ell_1$-QR contains the true model as a submodel, and derive bounds on the dimension of the selected model, uniformly over $\mathcal{U}$; we also provide conditions under which hard-thresholding selects the minimal true model, uniformly over $\mathcal{U}$.

\end{abstract}

\begin{keyword}[class=AMS]
\kwd[Primary ]{62H12}
\kwd{62J99}
\kwd[; secondary ]{62J07} 
\end{keyword}

\begin{keyword}
\kwd{median regression}
\kwd{quantile regression}
\kwd{sparse models}
\end{keyword}

\end{frontmatter}

\section{Introduction}\label{Sec:Intro}

Quantile regression is an important statistical method for analyzing the impact of regressors on the conditional distribution
of a response variable (cf. \cite{Laplace},  \cite{KB78}). It captures the heterogeneous
impact of regressors on different parts of the
distribution \cite{Buchinsky1994}, exhibits robustness to outliers
\cite{K2005},  has excellent computational properties
\cite{PornoyKoenker1997}, and has wide applicability
\cite{K2005}.  The asymptotic theory for quantile regression has been developed under both a fixed number of regressors and an
increasing number of regressors.  The asymptotic theory under
a fixed number of regressors is given in \cite{KB78},
\cite{Portnoy:mess}, \cite{GJ1992}, \cite{Knight1998},
\cite{C2005} and others. The asymptotic theory under an increasing
number of regressors is given in \cite{HS00} and
\cite{BC-MCMC,BC-qrID}, covering the case where the number of
regressors $p$ is negligible relative to the sample size $n$
(i.e., $p=o(n)$).

In this paper, we consider quantile regression in high-dimensional sparse models (HDSMs).  In  such models, the overall number of regressors $p$ is very large, possibly much larger than the sample size $n$. However, the number of significant regressors for each conditional quantile of interest \comment{-- those having a non-zero impact on the response variable --} is  at most $s$, which is smaller than the sample size, that is, $s = o(n)$. HDSMs (\cite{CandesTao2007,MY2007,BickelRitovTsybakov2009}) have emerged to deal with many new applications arising in biometrics, signal processing, machine learning, econometrics, and other areas of data analysis where high-dimensional data sets have become widely available.

A number of papers have begun to investigate estimation of HDSMs, focusing primarily on penalized mean regression, with  the $\ell_1$-norm acting as a penalty function \cite{BickelRitovTsybakov2009,CandesTao2007,Koltchinskii2009,MY2007,vdGeer,ZhangHuang2006}.   \cite{BickelRitovTsybakov2009,CandesTao2007,Koltchinskii2009,MY2007,ZhangHuang2006} demonstrated the fundamental result that $\ell_1$-penalized least squares estimators achieve the rate $\sqrt{s/n} \sqrt{\log p}$, which is very close to the oracle rate $\sqrt{s/n}$ achievable when the true model is known.   \cite{vdGeer} demonstrated a similar fundamental result on the excess forecasting error loss under both quadratic and non-quadratic loss functions. Thus the estimator can be consistent and can have excellent forecasting performance even under very rapid, nearly exponential, growth of the total number of regressors $p$.  See \cite{FanLv2006,BickelRitovTsybakov2009,BuneaTsybakovWegkamp2006,BuneaTsybakovWegkamp2007,BuneaTsybakovWegkamp2007b,LouniciPontilTsybakovvandeGeer2009,RosenbaumTsybakov2008}
  for many other interesting developments and a detailed review of the existing literature.

Our paper's contribution is to develop a set of results on model selection and rates of convergence for quantile regression within the HDSM framework.  Since ordinary quantile regression is inconsistent in HDSMs, we consider quantile regression penalized by the $\ell_1$-norm of parameter coefficients, denoted $\ell_1$-QR. First, we show that under general conditions $\ell_1$-QR estimates of regression coefficients and regression functions are consistent at the near-oracle rate $\sqrt{s/n} \sqrt{\log (p\vee n)}$, uniformly in a compact interval $\mathcal{U} \subset (0,1)$ of quantile indices.\footnote{Under $s \to \infty$, the oracle rate, uniformly over a proper compact interval $\mathcal{U}$, is $\sqrt{(s/n) \log n}$, cf. \cite{BC-qrID}; the oracle rate for a single quantile index is $\sqrt{s/n}$, cf. \cite{HS00}.}  (This result is different from and hence complementary to \cite{vdGeer}'s fundamental results on the rates for excess forecasting error loss.)  Second, in order to make $\ell_1$-QR practical, we propose a partly pivotal, data-driven choice of the penalty level, and show that this choice leads to the same sharp convergence rate.  Third, we show that $\ell_1$-QR correctly selects the true model as a valid submodel when the non-zero coefficients of the true model are well separated from zero. Fourth, we also propose and analyze the post-penalized estimator (post-$\ell_1$-QR), which applies ordinary, unpenalized quantile regression to the model selected by the penalized estimator, and thus aims at reducing the regularization bias of the penalized estimator.  We show that under similar conditions post-$\ell_1$-QR can perform as well as $\ell_1$-QR in terms of the rate of convergence, uniformly over $\mathcal{U}$, even if the $\ell_1$-QR-based model selection misses some components of the true models.  This occurs because $\ell_1$-QR-based model selection only misses those components that have relatively small coefficients.  Moreover, post-$\ell_1$-QR can perform better than $\ell_1$-QR if the $\ell_1$-QR-based model selection correctly includes all components of the true model as a subset.  (Obviously, post-$\ell_1$-QR can perform as well as the oracle if the $\ell_1$-QR perfectly selects the true model, which is, however, unrealistic for many  designs of interest.)  Fifth, we illustrate the use of $\ell_1$-QR and post-$\ell_1$-QR with a Monte Carlo experiment and an international economic growth example.  To the best of our knowledge, all of the above results are new and contribute to the literature on HDSMs.  Our results on post-penalized estimators and some proof techniques could also be of interest in other problems.  We provide further technical comparisons to the literature in Section \ref{Sec:Setting}.

\subsection{Notation} In what follows, we implicitly index all parameter values
by the sample size $n$, but we omit the index whenever this does
not cause confusion. We  use the empirical process
notation as defined in \cite{vdV-W}. In particular, given a random sample $Z_1,...,Z_n$, let
$\Gn (f) = \Gn (f(Z_i)) :=
n^{-1/2} \sum_{i=1}^n (f(Z_i) - \Ep[f(Z_i)])$ and $\mathbb{E}_n f = \mathbb{E}_n f(Z_i) := \sum_{i=1}^n f(Z_i)/n$.
We use the notation $a \lesssim b$ to denote $a = O(b)$, that is, $a \leq c b$ for some constant $c>0$ that
does not depend on $n$; and $a\lesssim_P b$ to denote $a=O_P(b)$. We also use the notation $a \vee b =
\max\{ a, b\}$ and $a \wedge b = \min\{ a , b \}$. We denote
the $\ell_2$-norm by $\|\cdot\|$, $\ell_1$-norm by $\|\cdot\|_1$,
$\ell_{\infty}$-norm by $\|\cdot\|_\infty$, and the
$\ell_0$-``norm" by $\|\cdot\|_0$ (i.e., the number of non-zero
components). We denote by $\|\beta\|_{1,n} =
\sum_{j=1}^p \hat \sigma_j |\beta_j|$ the $\ell_1$-norm weighted
by $\hat\sigma_j$'s. Finally, given a vector $\delta \in \RR^p$,
and a set of indices $T \subset \{1,\ldots,p\}$, we denote by
$\delta_T$ the vector in which $\delta_{Tj} = \delta_j$ if $j\in
T$, $\delta_{Tj}=0$ if $j \notin T$.
\section{The Estimator, the Penalty Level, and Overview of Rate Results}\label{Sec:Setting}
In this section we formulate the setting and the estimator, and state  primitive regularity conditions. We also provide an overview of the main results.

\subsection{Basic Setting} The setting of interest
corresponds to a parametric quantile regression model,
where the dimension $p$ of the underlying model increases with
the sample size $n$. Namely, we consider a response variable $y$ and $p$-dimensional covariates $x$ such that the $u$-th conditional quantile function of $y$ given $x$ is given by
 \begin{equation}\label{lin model}
F^{-1}_{y_i|x_i}(u|x_i)= x'\beta(u),  \ \
\beta(u) \in \RR^p, \ \ \mbox{for all} \ u \in \mathcal{U},
 \end{equation} where $\mathcal{U}\subset(0,1)$ is a compact set of quantile indices. Recall that the $u$-th conditional quantile $F^{-1}_{y_i|x_i}(u|x)$ is the  inverse of the conditional distribution function $F_{y_i|x_i}(y|x_i)$
 of $y_i$ given $x_i$.  We consider the case where the dimension $p$ of the model is
large, possibly much larger than the available sample size $n$,
but the true model $\beta(u)$ has a sparse support $$T_u = \supp(\beta(u)) = \{ j
\in \{1,\ldots,p\} \ : \ |\beta_j(u)|>0 \}
$$ having only $s_u \leq s \leq n/\log(n \vee p)$ non-zero components  for all $u\in\mathcal{U}$.

The population coefficient $\beta(u)$ is known to minimize the criterion function
 \be\label{define beta}
Q_u(\beta) = \Ep[\rho_{u} (y-x'\beta)],
 \ee where $\rho_{u} (t) = (u - 1\{t\leq 0\})t$ is the asymmetric absolute deviation  function \cite{KB78}.  Given a random sample  $(y_1,x_1),\ldots,(y_n,x_n)$, the quantile regression estimator of $\beta(u)$ is defined as a minimizer of the empirical analog of (\ref{define beta}):
\begin{equation}\label{QR}
\hat Q_u(\beta) = \En\[ \rho_u (y_i - x_i'\beta) \].
\end{equation}

In high-dimensional settings, particularly when $p \geq n$, ordinary quantile regression is generally inconsistent, which motivates the use of penalization in order to remove all, or at least nearly all, regressors whose population coefficients are zero, thereby possibly restoring consistency.  A penalization that has proven quite useful in least squares settings is the $\ell_1$-penalty leading to the Lasso estimator \cite{T1996}.

\subsection{Penalized and Post-Penalized Estimators}
The $\ell_1$-penalized quantile regression estimator $\hat \beta (u)$ is a solution to the following optimization problem: \begin{equation}\label{Def:L1QR}
\min_{\beta \in \mathbb{R}^p} \ \hat Q_u(\beta) + \frac{ \lambda\sqrt{u(1-u)}}{n}\sum_{j=1}^p
\hat \sigma_j|\beta_j|
\end{equation} where $\hat\sigma_j^2 = \En[x_{ij}^2]$. The criterion function in (\ref{Def:L1QR}) is the sum of the criterion function (\ref{QR}) and a penalty function given by a scaled $\ell_1$-norm of the parameter vector. The overall penalty level $\lambda\sqrt{u(1-u)}$ depends on each quantile index $u$, while $\lambda$ will depend on the set $\mathcal{U}$ of quantile indices of interest. The $\ell_1$-penalized quantile regression  has been considered in \cite{KnightFu2000} under small (fixed) $p$ asymptotics.  It is important to note that the penalized quantile regression problem (\ref{Def:L1QR}) is equivalent to a linear programming problem (see Appendix \ref{App:LP}) with a dual version that is useful for analyzing the sparsity of the solution. When the solution is not unique, we define $\hat \beta(u)$ as any optimal basic feasible solution (see, e.g., \cite{BertsimasTsitsiklis}). Therefore, the problem  (\ref{Def:L1QR}) can be solved in polynomial time, avoiding the computational curse of dimensionality.  Our goal is to derive the rate of convergence and model selection properties of this estimator.

The post-penalized estimator (post-$\ell_1$-QR) applies ordinary quantile regression to  the model $\widehat T_u$ selected by the $\ell_1$-penalized quantile regression. Specifically, set
$$\widehat T_u = \supp( \hat \beta(u) ) = \{ j \in \{1,\ldots,p\} \ : \ |\hat\beta_j(u)| > 0\},$$
and define the post-penalized estimator $\widetilde \beta(u)$ as \begin{equation}\label{Def:TwoStep}
\widetilde \beta(u) \in \arg\min_{\beta \in \mathbb{R}^p:  \beta_{\widehat T_u^c} = 0} \widehat Q_u(\beta),
\end{equation}
which removes from further estimation the regressors that were not selected. If the model selection works perfectly -- that is, $\widehat T_u = T_u$ -- then this estimator is simply the oracle
estimator, whose properties are well known. However, perfect model
selection might be unlikely for many designs of interest.
Rather, we are interested in the more realistic scenario
where the first-step estimator $\hat \beta(u)$ fails to select
some components of $ \beta(u)$. Our goal is to derive the rate of convergence for the post-penalized estimator and show it
can perform well under this scenario.

\subsection{The choice of the penalty level $\lambda$}
In order to describe our choice of the penalty level $\lambda$, we
introduce the random variable \begin{equation}\label{almost sure}
\Lambda = n \sup_{u\in \mathcal{U}}\max_{1\leq j\leq p}\left|
\En\[ \frac{x_{ij}( u - 1\{u_i \leq u
\})}{\hat\sigma_j\sqrt{u(1-u)}}\] \right| ,
\end{equation}
where $u_1,\ldots, u_n$ are i.i.d. uniform $(0,1)$ random
variables, independently distributed from the regressors,
$x_1,\ldots, x_n$.  The random variable $\Lambda$ has a known, that
is, pivotal, distribution conditional on  $X= [x_1,\ldots, x_n]'$.
We then set
\begin{equation}\label{Def:LambdaPivotal00}
\lambda = c \cdot  \Lambda(1-\alpha|X),
 \end{equation}
where $\Lambda(1-\alpha|X) := (1-\alpha)$-quantile of $\Lambda$
conditional  on $X$, and the constant $c>1$ depends on the design.\footnote{$c$ depends only on the constant $c_0$ appearing in condition D.4; when $c_0 \geq 9$, it suffices to set $c=2$.}
Thus the penalty level depends on the pivotal quantity $\Lambda(1-\alpha|X)$ and  the design.
  Under assumptions D.1-D.4 we can set $c=2$,
 similar to  \cite{BickelRitovTsybakov2009}'s choice for least squares.  Furthermore, we recommend computing $\Lambda(1-\alpha|X)$ using simulation of $\Lambda$.\footnote{We also provide analytical bounds on $\Lambda(1-\alpha|X)$  of the
form $ C(\alpha,\mathcal{U}) \sqrt{n\log p}$ for some numeric constant $C(\alpha,\mathcal{U})$. We recommend simulation because it accounts for correlation among the columns of $X$ in the sample.}  Our concrete recommendation for practice is to set $1-\alpha =0.9$.

The parameter $1-\alpha$ is the confidence level in the sense
 that, as in \cite{BickelRitovTsybakov2009}, our (non-asymptotic) bounds on the estimation error will contract
at the optimal rate with this probability.
 We refer the reader to Koenker \cite{Koenker2010}
for an implementation of our choice of penalty level and practical suggestions concerning the
confidence level.  In particular, both here and in Koenker \cite{Koenker2010}, the confidence level $1-\alpha =0.9$ gave good
performance results in terms of balancing regularization bias with estimation variance.
Cross-validation may also be used to choose the confidence level
$1-\alpha$.  Finally, we should note that, as in \cite{BickelRitovTsybakov2009},  our theoretical bounds
allow for any choice of $1-\alpha$ and are stated as a function of $1-\alpha$.

The formal rationale behind the choice (\ref{Def:LambdaPivotal00}) for the penalty level $\lambda$ is that this choice leads precisely to the optimal rates of convergence for $\ell_1$-QR.
(The same or slightly higher choice of $\lambda$
also guarantees  good performance of post-$\ell_1$-QR.)  Our general strategy for choosing $\lambda$ follows
\cite{BickelRitovTsybakov2009}, who recommend selecting $\lambda$
so that it dominates a relevant measure of noise in the sample criterion function, specifically the supremum norm of a suitably rescaled
gradient of the sample criterion function evaluated at the true parameter
value.    In our case this general strategy leads precisely to the choice
(\ref{Def:LambdaPivotal00}).  Indeed,  a (sub)gradient $\widehat S_u(\beta(u)) = \En[( u - 1\{y_i \leq
x_i'\beta(u)\})x_i] \in \partial \widehat Q_u (\beta(u))$ of the quantile
regression objective function evaluated at the truth has a pivotal
representation, namely  $\widehat S_u(\beta(u))=  \En[( u - 1\{u_i \leq u\})x_i]$ for $u_1,\ldots,u_n$ i.i.d. uniform $(0,1)$ conditional on $X$, and so
we can represent $\Lambda$ as in (\ref{almost sure}), and, thus, choose $\lambda$ as in (\ref{Def:LambdaPivotal00}).

\subsection{General Regularity Conditions}\label{Sec:Primitive}

We consider the following conditions on a sequence of models indexed by $n$ with parameter dimension $p=p_n \to \infty$.
In these conditions all constants can depend on $n$, but we omit the explicit indexing by $n$ to ease exposition.

\textbf{D.1.} \textit{Sampling and Smoothness.}   \textit{Data
$(y_i,x_i')', i=1,\ldots,n,$ are an i.i.d. sequence of real
$(1+p)$-vectors, with the conditional $u$-quantile function given
by (\ref{lin model}) for each $u \in \mathcal{U}$, with  the first component of $x_i$ equal to one, and  $n \wedge p \geq 3$. For each value  $x$ in the support of $x_i$, the conditional density $f_{y_i|x_i}(y|x)$ is continuously differentiable in $y$ at each $y \in \Bbb{R}$, and $f_{y_i|x_i}(y|x)$ and $\frac{\partial}{\partial y} f_{y_i|x_i}(y|x)$ are bounded in absolute value
by constants $\bar{f}$ and $\bar{f'}$, uniformly in $y\in \mathbb{R}$ and $x$ in the support $x_i$.  Moreover, the conditional
density of $y_i$ evaluated at the conditional quantile $x_i'\beta(u)$ is bounded away from zero uniformly in $\mathcal{U}$, that is  $f_{y_i|x_i}(x'\beta(u)|x)> \underf >0$  uniformly in $u \in \mathcal{U}$ and $x$ in the support of $x_i$.}

Condition D.1 imposes only mild smoothness assumptions on the conditional
density of the response variable given regressors, and does not impose
any normality or homoscedasticity assumptions\comment{ commonly made in the literature on HDSMs}.
The assumption that the conditional density is bounded below at the conditional
quantile is standard, but we can replace it by the slightly more general condition $\inf_{u \in \mathcal{U}} \inf_{\delta \neq 0} (\delta' J_u \delta)/(\delta'\Ep[x_i x_i'] \delta) \geq \underf>0,$ on the Jacobian matrices $$J_u = \Ep[f_{y_i|x_i}(x_i'\beta(u)|x_i)x_ix_i'] \ \ \mbox{for all} \ \ u \in \mathcal{U},$$
throughout the paper.

\textbf{D.2.} \textit{Sparsity and Smoothness of $u \mapsto
\beta(u)$.}\textit{ Let $\mathcal{U}$ be a compact subset of $(0,1)$. The
coefficients $\beta(u)$ in (\ref{lin model}) are sparse and smooth
with respect to $u\in \mathcal{U}$:
$$ \sup_{u \in \mathcal{U}} \| \beta(u) \|_0 \leq s \ \ \ \ \mbox{and} \ \ \ \ \|\beta(u) - \beta(u')\| \leq L|u-u'|, \ \ \ \mbox{for all} \ \ u, u' \in \mathcal{U}$$ where $ s \geq 1$, and $\log L \leq C_L \log (p\vee n)$ for some constant $C_L$.
}

Condition D.2 imposes sparsity and smoothness on the behavior
of the quantile regression coefficients $\beta(u)$  as we vary the
quantile index $u$.

\textbf{D.3.} \textit{Well-behaved Covariates.} \textit{Covariates are
normalized such that $\sigma_j^2 = \Ep[x_{ij}^2] = 1$ for all $j =
1,\ldots,p$, and $\hat
\sigma_j^2 = \En[x^2_{ij}]$ obeys $P(\max_{1\leq j \leq p} \left| \hat \sigma_{j} - 1 \right| \leq 1/2) \geq 1-\gamma \to 1$ as $n \to \infty$. }

Condition D.3 requires  that $\hat \sigma_j$ does not deviate too much
 from  $\sigma_j$ and normalizes $\sigma_j^2 =  1$.

In order to state the next assumption, for some $c_0 \geq 0$ and each $u \in \mathcal{U}$, define
$$ A_u:=\{ \delta
\in \RR^p : \ \|\delta_{T_u^c}\|_1 \leq c_0 \|\delta_{T_u}\|_1,
\|\delta_{T_u^c}\|_0\leq n \},$$
which will be referred to as the restricted  set.
Define $\overline T_u(\delta,m) \subset\{1,...,p\}\setminus T_u$
as the support of the $m$ largest in absolute value components of
the vector $\delta$ outside of $T_u=\supp(\beta(u))$, where $\overline
T_u(\delta,m)$ is the empty set if $ m =0$.

\textbf{D.4.} \textit{Restricted Identifiability and
Nonlinearity.} \textit{For some
constants $m \geq 0$ and $c_0 \geq 9$, the matrix
$\Ep[x_i x_i']$    satisfies
\begin{equation}\label{Def:RE}\tag{RE($c_0, m$)} \kappa^2_m := \inf_{u \in \mathcal{U}} \inf_{\delta \in A_u, \delta \neq 0} \frac{\delta' \Ep[x_i x_i']\delta}{\|\delta_{T_u\cup \overline{T}_u(\delta,m)}\|^2} >0  \end{equation}
and $\log (\underf \kappa_0^2) \leq C_f \log (n \vee p)$ for some constant $C_f$. Moreover,
\begin{equation}\label{compute q} q:= \frac{3}{8} \frac{\underf^{3/2}}{\bar{f'}}  \inf_{u\in\mathcal{U}}  \inf_{\delta\in A_u, \delta \neq 0} \frac{\Ep[|x_i'\delta|^2]^{3/2}}{\Ep[|x_i'\delta|^3]}>0. \tag{RNI($c_0$)}
\end{equation}}

The restricted eigenvalue (RE) condition is analogous to the condition
in \cite{BickelRitovTsybakov2009} and \cite{CandesTao2007}; see
 \cite{BickelRitovTsybakov2009} and \cite{CandesTao2007} for different sufficient primitive conditions that yield bounds on $\kappa_m$. \comment{prove that the RE
condition on the design matrix $\Ep[x_ix_i']$ is quite general, and is weaker than nearly all other
design conditions used in the least squares literature;}  Also, since $\kappa_m$ is non-increasing in $m$,
 $RE(c_0,m)$ for any $m>0$ implies $RE(c_0,0)$.
 The restricted non-linear impact (RNI) coefficient $q$ appearing in D.4 is a new concept, which controls the  quality of minoration of
 the quantile regression objective function by a quadratic function over
 the restricted set.

Finally, we state another condition needed to derive results on the post-model selected estimator.
In order to state the condition, define the sparse set $
\widetilde A_u(\widetilde m) = \{ \delta \in \RR^p :
\|\delta_{T_u^c}\|_0 \leq \widetilde m \}$ for $\widetilde m \geq
0$ and $u\in\mathcal{U}$.

\textbf{D.5.} \textit{Sparse Identifiability and Nonlinearity.}
The matrix $\Ep[x_i x_i']$ satisfies for some $\widetilde m \geq
0$:\begin{equation}\label{Def:SE}\tag{SE($\widetilde m$)}
\widetilde \kappa^2_{\widetilde m}:= \inf_{u \in \mathcal{U}}
\inf_{\delta \in \widetilde A_u(\widetilde m), \delta \neq 0} \frac{
\delta'\Ep[x_ix_i']\delta}{\delta'\delta}  >0,\end{equation}
and
\begin{equation}\label{Def:SNI}\tag{SNI($\widetilde m$)} \widetilde q_{\widetilde m} :=  \frac{3}{8} \frac{\uglyf^{3/2}}{\bar{f'}} \inf_{u \in \mathcal{U}}  \inf_{\delta \in \widetilde A_u(\widetilde m), \delta \neq 0}
 \frac{\Ep[|x_i'\delta|^2]^{3/2}}{\Ep[|x_i'\delta|^3]}>0.
\end{equation}

We invoke the sparse eigenvalue (SE) condition in order to analyze
the post-penalized estimator (\ref{Def:TwoStep}). This assumption is similar to the conditions used  in \cite{MY2007} and \cite{ZhangHuang2006} to analyze Lasso. Our form of the SE condition is neither less nor
more general than the RE condition. The SNI coefficient $\widetilde q_{\widetilde m}$ controls the quality of minoration of the quantile regression objective function by a quadratic function over sparse neighborhoods of the true parameter.


\subsection{Examples of Simple Sufficient Conditions}

In order to highlight the nature and usefulness of conditions D.1-D.5 it is instructive to state some simple
sufficient conditions (note that D.1-D.5 allow for much more
general conditions). We relegate the proofs of this section to the Supplementary Material Appendix \ref{Supplementary Material} for brevity.

{\sc Design 1: Location Model with Correlated Normal Design}. Let us consider estimating a standard  location model
$$ y = x'\beta^o + \varepsilon, $$
where $\varepsilon \sim N(0,\sigma^2)$, $\sigma>0$ is fixed, $x=( 1, z' )'$,
with $z \sim N(0, \Sigma),$ where $\Sigma$ has ones in the diagonal, a minimum eigenvalue bounded away from zero by a constant $\kappa^2>0$, and a maximum eigenvalue bounded from above, uniformly in $n$.

\begin{lemma}
Under Design 1 with $\mathcal{U} = [\xi,1-\xi]$, $\xi > 0$, conditions D.1-D.5 are satisfied with
$$\bar f = 1/[\sqrt{2\pi}\sigma], \ \ \bar f' = \sqrt{e/[2\pi]}/\sigma^2, \ \ \underf  = 1/\sqrt{2\pi\xi}\sigma, $$
$$\|\beta(u)\|_0 \leq\|\beta^o\|_0 + 1, \ \ \gamma = 2p\exp(-n/24), \ \ L =\sigma /\xi$$ $$ \kappa_m \wedge \widetilde \kappa_{\widetilde m} \geq \kappa,  \ \ q\wedge \widetilde q_{\widetilde m} \geq (3/[32 \xi^{3/4}])\sqrt{\sqrt{2\pi}\sigma/e} .$$
\end{lemma}

Note that the normality of errors can be easily relaxed by allowing for the disturbance $\varepsilon$ to have a smooth density
that obeys the conditions stated in D.1.  The conditions on the population design matrix can also be replaced
by more general primitive conditions specified in Remark 2.1.

\comment{\begin{proof}
This model implies a linear quantile model with coefficients
$\beta_1(u) = \beta^o_1 + \sigma\Phi^{-1}(u)$ and $\beta_{j}(u)
= \beta^o_{j}$ for $j=2,\ldots,p$. Let
$$ \ \bar f' = \sup_z
\phi'(z/\sigma)/\sigma^2,  \ \bar f = \sup_{z}
\phi(z/\sigma)/\sigma, \ \ \underf = \min_{u \in
\mathcal{U}} \phi( \Phi^{-1}(u))/\sigma, $$
so that D.1 holds with the constants $\bar f$ and $\bar f'$.
 D.2 holds, since $\|\beta(u)\|_0  \leq \|\beta^o\|_0+1$ and $u \mapsto \beta(u)$ is Lipschitz over $\mathcal{U}$
with the constant $L= \sup_{u \in \mathcal{U}} \sigma /\phi(\Phi^{-1}(u))$, which  trivially obeys $\log L \lesssim \log (n \vee p)$.
D.4 also holds, in particular by Chernoff's  tail bound
$$ P \left\{\max_{1\leq j\leq p}  |\hat\sigma_j-1| \leq 1/2\right\} \geq 1-\gamma = 1-2p\exp(-n/24),$$ where $1-\gamma$ approaches 1 if $n/\log p \to \infty$.   Furthermore, the smallest eigenvalue of the population design matrix $\Sigma$
is at least $(1-|\rho|)/(2+2|\rho|)$ and the maximum eigenvalue is
at most $(1+|\rho|)/(1-|\rho|)$. Thus, D.4 and D.5 hold with
$$
\kappa_m \wedge \widetilde \kappa_{\widetilde m} \geq \kappa, \ \ 
$$
for all $m, \widetilde m \geq 0$. If the covariates $x$ have a log-concave density, then  $$q
\geq 3 \underf^{3/2} / (8K_\ell \bar f') \text{ for a universal constant
$K_\ell$ }.$$ In the case of normal variables you can take $K_\ell = 4/\sqrt{2\pi}$. The bound follows from $ \Ep[|x_i'\delta|^3]
\leq K_\ell \Ep[|x_i'\delta|^2]^{3/2} $ holding for log-concave $x$
for some universal constant $K_{\ell}$ by Theorem 5.22 of
\cite{LovaszVempala2007}. The bound for $\widetilde q_{\widetilde m}$ is the same.
\end{proof}}

{\sc Design 2: Location-scale model with bounded regressors}. Let us consider estimating a standard  location-scale model
$$ y = x'\beta^o + x'\eta \cdot  \varepsilon, $$
where $\varepsilon \sim F$ independent of $x$, with a continuously differentiable probability density function $f$. We assume that the population design matrix $\Ep[xx']$ has ones in the diagonal and has eigenvalues uniformly bounded away from zero and from above,  $x_1 =1$, $\max_{1\leq j\leq p} |x_{j}| \leq K_B$. Moreover, the vector $\eta$ is such that $0 < \upsilon \leq  x'\eta \leq \Upsilon < \infty$ for all values of $x$.

\begin{lemma}
Under Design 2 with $\mathcal{U} = [\xi,1-\xi]$, $\xi > 0$, conditions D.1-D.5 are satisfied with $$\bar f \leq \max_{\varepsilon} f(\varepsilon)/\upsilon, \ \ \bar f' \leq \max_\varepsilon f'(\varepsilon) / \upsilon^2, \ \ \underf =  \min_{u \in \mathcal{U}} f(F^{-1}(u))/\Upsilon, \ \ $$
$$\|\beta(u)\|_0 \leq\|\beta^o\|_0 + \|\eta\|_0 + 1, \ \ \gamma = 2p\exp(-n/[8K_B^4]), $$ $$ \kappa_m \wedge \widetilde \kappa_{\widetilde m} \geq \kappa, \ \ L =  \|\eta\| \underf$$ $$ q \geq
 \frac{3}{8} \frac{\underf^{3/2}}{\bar{f'}}  \kappa /[10 K_B \sqrt{s}],  \widetilde q_{\widetilde m} \geq   \frac{3}{8} \frac{\underf^{3/2}}{\bar{f'}}  \kappa /[ K_B \sqrt{s+\widetilde m}].$$
\end{lemma}
\comment{\begin{proof}
This model implies a linear quantile model with coefficients
$\beta(u) = \beta^o_1 + F^{-1}(u)\eta$. We have
$$ \ \bar f' = \max_y f'(y) / \mu^2,  \ \bar f = \max_{y} f(y)/\mu, \ \ \underf \geq \min_{u \in \mathcal{U}} f(F^{-1}(u))/\Upsilon, $$
so that D.1 holds with the constants $\bar f$ and $\bar f'$.
 D.2 holds, since $\|\beta(u)\|_0 \leq \|\beta^o\|_0 + \|\eta\|_0 + 1$ and $u \mapsto \beta(u)$ is Lipschitz over $\mathcal{U}$
with the constant $L= \|\eta\|\max_{u \in \mathcal{U}} \Upsilon /f(F^{-1}(u))$ uniformly in $n$, which  obeys $\log L \lesssim \log (n \vee p)$.
Next recall that $x_{ij}^2\leq K_B^2$. Then, by Hoeffding inequality we have
$$ P( |\En[ x_{ij}^2 - 1 ] | \geq 1/2 ) \leq 2\exp( -n/[8K_B^4]).$$
Applying a union bound D.3 holds with $\gamma = 2p\exp( -n/[8K_B^4])$ which approaches 0 if $n/\log p \to \infty$.   Furthermore, the smallest eigenvalue of the population design matrix is bounded away from zero. Thus, D.4 and D.5 hold with $c_0 = 9$ (in fact with any $c_0>0$) and
$$
\kappa_m \wedge \widetilde \kappa_{\widetilde m} \geq \sqrt{{\rm min eig}(\Ep[xx'])}, 
$$
for all $m, \widetilde m \geq 0$.

Finally, the restricted nonlinear impact coefficient satisfies
$q \geq 3 \underf^{3/2} \kappa_0 / (8 \bar f' K_B (1+c_0)
\sqrt{s} )$. Indeed, the latter bound follows from $\Ep[|x_i'\delta|^3] \leq  \Ep[|x_i'\delta|^2] K_B \|\delta\|_1 \leq
\Ep[|x_i'\delta|^2] K_B (1+c_0)\sqrt{s}\|\delta_{T_u}\| \leq
\Ep[|x_i'\delta|^2]^{3/2} K_B (1+c_0)\sqrt{s} / \kappa_0 $ holding
since $\delta \in A_u$ so that $\|\delta\|_1 \leq
(1+c_0)\|\delta_{T_u}\|_1 \leq
\sqrt{s}(1+c_0)\|\delta_{T_u}\|$. Similarly, one can show $ \widetilde q_{\widetilde m} \geq (3/8) \uglyf^{3/2}
\widetilde \kappa_{\widetilde m}/ (\bar f' K_B \sqrt{\widetilde m + s} )$.
\end{proof}}

\begin{remark}(Conditions on $\Ep[x_i x_i']$).   The conditions on the population design matrix can also be replaced
by more general primitive conditions of the form stated in  \cite{BickelRitovTsybakov2009} and \cite{CandesTao2007}. For example, conditions on sparse eigenvalues suffice as shown in \cite{BickelRitovTsybakov2009}. \comment{Note that the condition that the population covariance matrix $\Sigma$ has eigenvalues bounded away from zero and from above can be relaxed to conditions on sparse eigenvalues as shown in \cite{BickelRitovTsybakov2009}.} Denote the minimum and maximum eigenvalue of the population design matrix by
\begin{equation}\label{Def:EigSparse}
\semin{m} = \min_{\|\delta\|= 1, \|\delta\|_0\leq m} \frac{\delta'\Ep\[x_ix_i'\]\delta}{\delta'\delta} \ \ \mbox{and} \ \
\semax{m} = \max_{\|\delta\|= 1, \|\delta\|_0\leq m} \frac{\delta'\Ep\[x_ix_i'\]\delta}{\delta'\delta}.
\end{equation}
Assuming that for some $m\geq s$ we have $m\semin{m+s}\geq c_0^2s\semax{m}$, then
$$ \kappa_m  \geq \sqrt{\semin{s+m}}\( 1 - c_0\sqrt{s\semax{s}/[ m\semin{s+m} ]}\) \ \ \mbox{and} \ \   \widetilde \kappa_{\widetilde m} \geq \semin{s+m}.$$
\end{remark}
\comment{
\begin{remark}[RE condition]\label{Comment:RE} The restricted eigenvalue (RE) condition is a quantile analog of
\cite{BickelRitovTsybakov2009}'s condition for means. The RE constants
$\kappa_m$ and $\underf$   determine the rate of
convergence and can change with $n$, although in many designs such as Example \ref{Ex:AR1} given below these constants will be bounded away from zero and from above.
 \cite{BickelRitovTsybakov2009} prove that the RE
condition on the design matrix $\Ep[x_ix_i']$ is quite general, and is weaker than nearly all other
design conditions used in the least squares literature;  also, since $\kappa_m$ is non-increasing in $m$,
 $RE(c_0,m)$ for any $m>0$ implies $RE(c_0,0)$.   The constant
$\underf$ controls the modulus of continuity between norms
weighted by the design matrix $\Ep[x_ix_i']$ and the Jacobian matrices $J_u$.
Note that $\underf$ is bounded below by $\underf^o$, the minimal value of the conditional density of $y_i$ evaluated at the conditional quantile $x_i'\beta(u)$:
\begin{equation} \underf\geq \underf^o:=  \inf_{u \in \mathcal{U}, x \in \supp(x_i)} f_{y_i|x_i}(x'\beta(u)|x), \end{equation}
where $\underf = \underf^o$ holds  with equality in location models, such as Example \ref{Ex:AR1}, but $\underf > \underf^o$
in general. The overall rationale behind the RE condition is that under our choice of
penalty level, $\delta = \hat \beta(u) - \beta(u)$ will belong to the restricted set $A_u$ with a high probability, and so identifiability
and rates would follow from the behavior of $\delta'J_u
\delta/\|\delta\|^2$ characterized by $\underf\kappa^2_m$.
Lastly, the additional condition $\log(\underf \kappa_0^2) \lesssim \log (n \vee p)$ requires
that $\underf \kappa_0^2$ does not increase faster than some power of $(n \vee p)$. This assumption
is mild, since typically we are more concerned with $\underf^{1/2} \kappa_0$ going to zero, and simplifies the statements of the main results.
\end{remark}

\begin{remark}[RNI condition]\label{Comment:RNI}
The restricted non-linear impact (RNI) coefficient $q$ appearing in D.4 is a new
concept, which controls the  quality of minoration of
 the quantile regression objective function by a quadratic function over
 the restricted set, in the sense precisely described by Lemma
 \ref{Lemma:E2}.  It turns out that this coefficient is well-behaved for several designs of interest.
Indeed, if the covariates $x$ have a log-concave density, then  $$q
\geq 3 \underf^{3/2} / (8K_\ell \bar f') \text{ for a universal constant
$K_\ell$ }.$$ On the other hand, if the covariates $|x_{ij}|$ are
uniformly bounded by $K_B$ for each $j\leq p$, and the RE$(c_0,
0)$ condition holds,
then $  q \geq 3 \underf^{3/2} \kappa_0 / (8 \bar f' K_B (1+c_0)
\sqrt{s} )$.   Indeed, the former bound follows from $ \Ep[|x_i'\delta|^3]
\leq K_\ell \Ep[|x_i'\delta|^2]^{3/2} $ holding for log-concave $x$
for some universal constant $K_{\ell}$ by Theorem 5.22 of
\cite{LovaszVempala2007}.  The latter bound follows from $\Ep[|x_i'\delta|^3] \leq  \Ep[|x_i'\delta|^2] K_B \|\delta\|_1 \leq
\Ep[|x_i'\delta|^2] K_B (1+c_0)\sqrt{s}\|\delta_{T_u}\| \leq
\Ep[|x_i'\delta|^2]^{3/2} K_B (1+c_0)\sqrt{s} / \kappa_0 $ holding
since $\delta \in A_u$ so that $\|\delta\|_1 \leq
(1+c_0)\|\delta_{T_u}\|_1 \leq
\sqrt{s}(1+c_0)\|\delta_{T_u}\|$.\end{remark}

\begin{remark}[SE condition]
We invoke the sparse eigenvalue (SE) condition in order to analyze
the post-penalized estimator. This assumption is similar to the
conditions used  in \cite{MY2007} and \cite{ZhangHuang2006} to analyze Lasso. Our form of the SE condition is neither less nor
more general than the RE condition.
 The rationale behind this condition is that the post-penalized estimator $\widetilde \beta(u)$ will be sparse,
 of dimension at most $\hat s_u = |\widehat T_u| \leq s + \widehat m$, where $\widehat m$ is the number
 of unnecessary components, that is, components outside  $T_u$.  Therefore, both identifiability and
 rates of convergence would follow from the behavior of $\delta'J_u \delta/\|\delta\|^2$ characterized by $\uglyfh \widetilde \kappa^2_{\widehat m}$.
 \end{remark}

\begin{remark}[SNI condition]
The SNI coefficient $\widetilde q_{\widetilde m}$ controls the
quality of minoration of the quantile regression objective
function by a quadratic function over sparse neighborhoods of
the true parameter. Similarly to the RNI coefficient,
if the covariates $x$ have a log-concave density, then the SNI coefficient satisfies $$
\widetilde q_{\widetilde m} \geq (3/8) \uglyf^{3/2} / (K_\ell \bar f')$$
and if the covariates $|x_{ij}|$ are uniformly bounded by $K_B$ and
SE condition holds, then  $ \widetilde q_{\widetilde m} \geq (3/8) \uglyf^{3/2}
\widetilde \kappa_{\widetilde m}/ (\bar f' K_B \sqrt{\widetilde m + s} )$.
Note that if the selected model has no unnecessary components
($\widetilde m = 0$), condition D.5 is an assumption only on the
true support.
\end{remark}
}
\subsection{Overview of Main Results}
Here we discuss our results under the simple setup of Design 1 and under $1/p \leq \alpha \to 0$ and $\gamma \to 0$. These simple assumptions allow us to straightforwardly compare our rate results to those obtained in the literature. We state our more general non-asymptotic results under general
conditions in the subsequent sections. Our first main rate result is that $\ell_1$-QR, with
our  choice (\ref{Def:LambdaPivotal00}) of parameter
$\lambda$, satisfies
\begin{equation}\label{result 1}
 \sup_{u \in \mathcal{U} }\| \hat \beta(u) - \beta(u)\| \lesssim _P \frac{ 1}{\underf\kappa_0 \kappa_s} \sqrt{\frac{s  \log (n \vee p)}{n}},
 \end{equation}
provided that the upper bound on the number of non-zero components
$s$ satisfies
 \begin{equation}\label{restrict s}
\frac{\sqrt{s \log (n \vee p)}}{\sqrt{n} \ \underf^{1/2}\kappa_0 q} \to 0.
\end{equation}
Note that $\kappa_0$, $\kappa_s$, $\underf$, and $q$ are bounded away from zero in this
example. Therefore,   the rate of
convergence is $\sqrt{s/n} \cdot \sqrt{\log (n \vee p)}$ uniformly in
the set of quantile indices $u \in \mathcal{U}$,  which
is very close to the oracle rate when $p$ grows polynomially in $n$.   Further, we note that our resulting
restriction (\ref{restrict s}) on the dimension $s$ of the
true models is very weak; when $p$ is polynomial in $n$, $s$ can be
of almost the same order as $n$, namely $s = o( n / \log n ) $.

Our second main result is that the dimension $\|\hat \beta(u)\|_0
$ of the model selected by the $\ell_1$-penalized estimator is of
the same stochastic order as the dimension $s$ of the true
models, namely
 \begin{equation} \label{result 2}
\sup_{u \in \mathcal{U}}\|\hat \beta(u)\|_0 \lesssim_P s.
 \end{equation}
Further, if the parameter values of the minimal true
model are well separated from zero, then with a high
probability the model selected by the $\ell_1$-penalized estimator
correctly nests the true minimal model:
\begin{equation}\label{result 3}
T_u = \supp (\beta(u))  \subseteq \widehat T_u = \supp (\hat
\beta(u)), \text { for all } u \in \mathcal{U}.
\end{equation}
Moreover, we provide conditions under which a hard-thresholded version
of the estimator selects the correct support.

Our third main result is that the post-penalized estimator, which
applies ordinary quantile regression to the selected model,  obeys
\begin{equation}\label{result 2a}
\begin{array}{rcl}
\displaystyle\sup_{u \in \mathcal{U}} \| \widetilde \beta(u) -
\beta(u)\| & \lesssim_P & \displaystyle   \frac{1}{\uglyfh\widetilde
\kappa^2_{\widehat m}}\sqrt{\frac{\widehat m  \log (n \vee
p)+ s\log n}{n}} \ \ \  +\\
& + & \displaystyle \frac{\sup_{u \in \mathcal{U}}1\{T_u
\not\subseteq \widehat T_u \}}{\underf\kappa_0\widetilde\kappa_{\widehat
m}} \sqrt{\frac{s \log(n\vee p)}{n}},
\end{array}
 \end{equation} where  $\widehat m= \sup_{u\in\mathcal{U}}\|\hat\beta_{T_u^c}(u)\|_0$ is the maximum number of wrong components selected for any quantile index $u\in\mathcal{U}$, provided that the
bound on the number of non-zero components $s$ obeys the growth condition (\ref{restrict s}) and
\begin{equation}\label{restrict s 2}
\frac{\sqrt{
\widehat m \log (n \vee p) + s\log n}}{\sqrt{n} \ \uglyfh^{1/2} \widetilde
\kappa_{\widehat m} \widetilde q_{\widehat m}} \to_P 0.
 \end{equation}
(Note that when $\mathcal{U}$ is a singleton, the $s \log n$ factor in (\ref{result 2a}) becomes $s$.)

We see from (\ref{result 2a}) that
post-$\ell_1$-QR can perform well  in terms of the rate
of convergence even if the selected model $\widehat T_u$ fails to contain the true
model $T_u$.  Indeed, since in this design $\widehat m \lesssim_P s$, post-$\ell_1$-QR has the rate of convergence
$\sqrt{s/n} \cdot \sqrt{\log (n \vee p)}$, which is
 the same as the rate of convergence of $\ell_1$-QR. The intuition for this result
 is that the $\ell_1$-QR based model selection
 can only miss covariates with relatively small
coefficients, which then permits post-$\ell_1$-QR to
perform as well or even better due to reductions in bias, as
confirmed by our computational experiments.

We also see from (\ref{result 2a}) that
post-$\ell_1$-QR can perform better than $\ell_1$-QR in terms of the rate
of convergence if the number of wrong components selected obeys $\widehat m = o_P(s)$ and the selected model contains the true
model, $\{T_u \subseteq \widehat T_u\}$ with probability converging to one.  In this case post-$\ell_1$-QR has the rate of convergence $\sqrt{ (o_P(s)/n) \log (n \vee p) +  (s/n) \log n },$
which is faster than the rate of convergence of $\ell_1$-QR.  In the extreme case of perfect model
selection,  that is, when $\widehat m = 0$, the rate of post-$\ell_1$-QR becomes $\sqrt{(s/n) \log n}$ uniformly
in $\mathcal{U}$. (When $\mathcal{U}$ is a singleton, the $\log n$ factor drops out.)  Note that inclusion $\{T_u \subseteq \widehat T_u\}$ necessarily happens when the coefficients
of the true models are well separated from zero, as we stated above.
Note also that the condition $\widehat m = o(s)$ or even $\widehat m = 0$ could occur under additional
conditions on the regressors (such as the mutual
coherence conditions that restrict the maximal pairwise correlation
of regressors).   Finally, we note that our second
restriction (\ref{restrict s 2}) on the dimension $s$ of the true models is very weak in this design; when $p$ is polynomial in $n$, $s$ can be
of almost the same order as $n$, namely $s = o( n / \log n ) $.

To the best of our knowledge, all of the results presented above are new, both for the single $\ell_1$-penalized quantile regression problem as well as for the infinite collection of $\ell_1$-penalized quantile regression problems.  These results therefore contribute to the rate results obtained for $\ell_1$-penalized mean regression and related estimators in the fundamental papers of \cite{BickelRitovTsybakov2009,CandesTao2007,Koltchinskii2009,MY2007,vdGeer,ZhangHuang2006}.  The results on post-$\ell_1$ penalized quantile regression had no analogs in the literature on mean regression, apart from the rather exceptional case of perfect model selection, in which case the post-penalized estimator is simply the oracle. Building on the current work these results have been extended to mean regression in \cite{BC-PostLASSO}.
 Our results on the sparsity of $\ell_1$-QR and model selection also contribute to the analogous results for mean regression \cite{MY2007}.  Also, our rate results for $\ell_1$-QR are different from, and hence complementary to, the fundamental results in \cite{vdGeer} on the excess forecasting loss under possibly non-quadratic loss functions, which also specializes the results to density estimation, mean regression, and logistic regression.  In principle we could apply  theorems in \cite{vdGeer} to the single quantile regression problem to derive the bounds on the excess loss $\Ep[\rho_u(y_i - x_i'\hat \beta(u))] - \Ep[\rho_u(y_i - x_i'\beta(u))]$.\footnote{Of course, such a derivation would entail some difficult work,  since we must verify  some high-level assumptions made directly on the performance of the oracle and penalized estimators in population and others (cf. \cite{vdGeer}'s  conditions I.1 and I.2, where I.2 assumes uniform in $x_i$ consistency of the penalized estimator in the population, and does not hold in our main examples, e.g., in Design 1 with normal regressors.)} However, these bounds would not imply our results  (\ref{result 1}), (\ref{result 2a}), (\ref{result 2}), (\ref{result 3}),  and (\ref{Def:LambdaPivotal00}), which characterize the rates of estimating coefficients $\beta(u)$ by $\ell_1$-QR and post-$\ell_1$-QR,  sparsity and model selection properties,  and the data-driven choice of the penalty level.

\section{ Main Results and Main Proofs}\label{Sec:MainFix}

In this section we derive rates of convergence for $\ell_1$-QR and
post-$\ell_1$-QR, sparsity bounds, and model selection results.

\subsection{Bounds on $\Lambda(1-\alpha |  X)$ } We start with a characterization of $\Lambda$ and its $(1-\alpha)$-quantile,
$\Lambda(1-\alpha |  X)$, which determines the magnitude of
our suggested penalty level $\lambda$ via equation (\ref{Def:LambdaPivotal00}).

\begin{theorem}[Bounds on $\Lambda(1-\alpha |  X)$]\label{Thm:BoundLAMBDA}
Let $\W = \max_{u\in\mathcal{U}}1/\sqrt{u(1-u)}$.  There is a universal constant $C_\Lambda$ such that
\begin{enumerate}
\item[(i)] $\displaystyle P \( \Lambda \geq k \cdot C_\Lambda \  \W
\sqrt{n\log p} \ | X \) \leq p^{-k^2+1}$, \item[(ii)]$\displaystyle \Lambda(1-\alpha |  X)
\leq   \sqrt{ 1 + \log
(1/\alpha)/\log p} \cdot C_\Lambda \ \W  \sqrt{n\log p} $  with probability $1$.
\end{enumerate}
\end{theorem}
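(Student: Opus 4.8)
The plan is to bound $\Lambda$ by a maximum over $j$ and a supremum over $u \in \mathcal{U}$ of empirical processes with bounded increments, and then apply exponential inequalities conditional on $X$. First I would fix $X$ and work with the conditional probability measure. For each fixed $j$ and fixed $u$, the quantity $n\En[x_{ij}(u - 1\{u_i \le u\})]/(\hat\sigma_j\sqrt{u(1-u)}) = \sum_{i=1}^n x_{ij}(u - 1\{u_i \le u\})/(\hat\sigma_j\sqrt{u(1-u)})$ is a sum of independent mean-zero bounded random variables (the $u_i$ are i.i.d.\ uniform $(0,1)$ and independent of $X$), with conditional variance $\sum_i x_{ij}^2 u(1-u)/(\hat\sigma_j^2 u(1-u)) = n$. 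So by Bernstein's or Hoeffding's inequality one gets a sub-Gaussian tail of order $\exp(-t^2/(2n) \cdot \text{const})$ for a single $(j,u)$. The $\W = \max_u 1/\sqrt{u(1-u)}$ factor enters through the $\sqrt{u(1-u)}$ normalization in the denominator, which is bounded below on the compact set $\mathcal{U}$.

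Second, I would handle the supremum over $u \in \mathcal{U}$. The map $u \mapsto x_{ij}(u - 1\{u_i \le u\})$ has bounded variation in $u$, and the class of functions $\{u \mapsto 1\{u_i \le u\} : u \in \mathcal{U}\}$ is a VC class (monotone indicators), so the empirical process indexed by $u$ is well-controlled. A clean route is to discretize $\mathcal{U}$ into a grid of $N$ points with mesh $\eta$; on the grid, take a union bound over the $pN$ pairs $(j, u_\ell)$; and control the oscillation between grid points using the monotonicity of $1\{u_i \le u\}$ in $u$ plus a bound on $\max_j \hat\sigma_j^{-1}\max_i|x_{ij}|/\sqrt{n}$ — or, more robustly, invoke a maximal inequality for the empirical process over the VC class (e.g.\ from \cite{vdV-W}) to get $\Ep[\sup_u |\cdot| \,|\, X] \lesssim \W\sqrt{n\log p}$ directly, then a Talagrand/bounded-difference concentration inequality to get the exponential tail around the mean. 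The cleanest statement is: conditionally on $X$, $\Lambda$ is a supremum of an empirical process whose "worst-case variance" is $n$ and whose envelope is controlled, giving $P(\Lambda \ge t \,|\, X) \le p \cdot \exp(-c\, t^2/(\W^2 n))$ up to the combinatorial VC factor, which is absorbed into the $\log p$; choosing $t = k\,C_\Lambda\,\W\sqrt{n\log p}$ yields part (i) with the exponent $-k^2 + 1$.

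Third, part (ii) follows from part (i) by a standard quantile-from-tail argument: set $p^{-k^2+1} = \alpha$, solve for $k = \sqrt{1 + \log(1/\alpha)/\log p}$, and conclude that $\Lambda(1-\alpha|X) \le k\,C_\Lambda\,\W\sqrt{n\log p}$ with probability one (the bound on the conditional quantile is deterministic given the conditional tail bound, since the tail bound in (i) holds for every realization of $X$).

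The main obstacle will be step two — getting the supremum over $u \in \mathcal{U}$ controlled with the right constant and without an extra $\log n$ or $\log(1/\alpha)$ factor sneaking into the $\sqrt{n \log p}$ rate. The point is that the class $\{1\{u_i \le u\}: u \in \mathcal{U}\}$ has VC dimension $1$, so the entropy integral contributes only a universal constant, not a $\log n$; one must be careful that the chaining bound genuinely delivers $C_\Lambda \W \sqrt{n\log p}$ with $C_\Lambda$ universal (independent of $n$, $p$, and $\mathcal{U}$), which is why the Lipschitz/bounded-variation structure in $u$ together with the sub-Gaussian increments is exactly what is needed. The Dvoretzky–Kiefer–Wolfowitz inequality (or its Massart form) is the natural tool for handling the uniformity in $u$ with a universal constant, after conditioning on $X$ and exploiting the independence of the $u_i$'s from $X$.
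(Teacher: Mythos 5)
Your proposal is correct and follows the same overall architecture as the paper's proof: a pointwise sub-Gaussian (Hoeffding-type) tail for each fixed $(j,u)$, a union bound over $j$ producing the $\sqrt{\log p}$ factor, control of the supremum over $u\in\mathcal{U}$ via the VC structure of $\{u\mapsto u-1\{u_i\le u\}\}$ with the normalization $\hat\sigma_j$ making the $L_2(\mathbb{P}_n)$ envelope exactly $1$ (hence a universal entropy constant), and the standard tail-inversion $k=\sqrt{1+\log(1/\alpha)/\log p}$ for part (ii). Where you differ is in the mechanism for the uniform-in-$u$ tail bound: the paper first applies the symmetrization lemma for probabilities (Lemma 2.3.7 of van der Vaart and Wellner), then treats the Rademacher-symmetrized process, conditionally sub-Gaussian for the $L_2(\mathbb{P}_n)$ norm by Hoeffding, with a bespoke chaining inequality (their Lemma \ref{expo1}) that tracks all constants explicitly; you propose instead an expectation bound via the entropy integral followed by Talagrand/bounded-difference concentration, which also delivers a universal constant since the bounded-difference increments are $2|x_{ij}|/\hat\sigma_j$ with squared sum $4n$. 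Two small cautions: the Dvoretzky--Kiefer--Wolfowitz inequality does not apply directly here because the process is weighted by the $x_{ij}$'s, so the VC/chaining route (which you also give) is the one that actually closes the argument; and your naive discretization variant would require controlling $\max_{i}|x_{ij}|/(\hat\sigma_j\sqrt{n})$, which is not guaranteed uniformly over realizations of $X$ -- the entropy-based route you fall back on avoids this, as does the paper's.
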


\subsection{Rates of Convergence}

In this section we establish the rate of convergence of
$\ell_1$-QR. We start with the following preliminary result
which shows that if the penalty level exceeds the specified threshold,
for each $u \in \mathcal{U}$, the estimator $\hat \beta(u) - \beta(u)$ will belong to the restricted set $ A_u:=\{ \delta
\in \RR^p : \ \|\delta_{T_u^c}\|_1 \leq c_0 \|\delta_{T_u}\|_1,
\|\delta_{T_u^c}\|_0\leq n \}$.

\begin{lemma}[Restricted Set]\label{Lemma:E1}
1. Under D.3, with probability at least $1-\gamma$ we have for every
$\delta \in \RR^p$ that \begin{equation}\label{E1normEquiv}
\frac{2}{3}\|\delta\|_{1,n} \leq \|\delta\|_{1} \leq 2
\|\delta\|_{1,n}.\end{equation}
2. Moreover, if for some $\alpha \in
(0,1)$
\begin{equation}\label{Def: lamnbda not} \lambda \geq \lambda_0 := \frac{c_0+3}{c_0-3} \Lambda(1-\alpha|X),
 \end{equation} then with
probability at least $1-\alpha-\gamma$, uniformly in $u \in
\mathcal{U}$, we have (\ref{E1normEquiv}) and
$$ \hat \beta(u) - \beta(u) \in A_u = \{ \delta \in \RR^p : \|\delta_{T_u^c}\|_1 \leq c_0 \|\delta_{T_u}\|_1, \|\delta_{T_u^c}\|_0 \leq n \}.$$
\end{lemma}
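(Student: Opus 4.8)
The plan is to prove Part 1 by a direct $\delta$-by-$\delta$ comparison of the two norms under the event that $\hat\sigma_j$'s are close to $1$, and to prove Part 2 by exploiting the optimality of $\hat\beta(u)$ together with convexity of $\hat Q_u$ and the definition of the pivotal quantity $\Lambda$. For Part 1, on the event $E_\sigma := \{\max_{1\le j\le p}|\hat\sigma_j-1|\le 1/2\}$, which has probability at least $1-\gamma$ by D.3, we have $1/2\le\hat\sigma_j\le 3/2$ for every $j$, so for any $\delta\in\RR^p$, $\|\delta\|_{1,n}=\sum_j\hat\sigma_j|\delta_j|\le (3/2)\|\delta\|_1$, giving $\tfrac23\|\delta\|_{1,n}\le\|\delta\|_1$, and symmetrically $\|\delta\|_{1,n}\ge(1/2)\|\delta\|_1$, giving $\|\delta\|_1\le 2\|\delta\|_{1,n}$. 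This establishes \eqref{E1normEquiv} on $E_\sigma$.

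For Part 2, I would fix $u\in\mathcal{U}$ and set $\delta=\hat\beta(u)-\beta(u)$. By optimality of $\hat\beta(u)$ in \eqref{Def:L1QR}, the penalized objective at $\hat\beta(u)$ is no larger than at $\beta(u)$, which rearranges to
\[
\hat Q_u(\hat\beta(u))-\hat Q_u(\beta(u))\;\le\;\frac{\lambda\sqrt{u(1-u)}}{n}\bigl(\|\beta(u)\|_{1,n}-\|\hat\beta(u)\|_{1,n}\bigr).
\]
Splitting the weighted $\ell_1$-norm over $T_u$ and $T_u^c$ and using $\beta_{T_u^c}(u)=0$ together with the triangle inequality, the right-hand side is bounded by $\tfrac{\lambda\sqrt{u(1-u)}}{n}\bigl(\|\delta_{T_u}\|_{1,n}-\|\delta_{T_u^c}\|_{1,n}\bigr)$. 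For the left-hand side, convexity of $\hat Q_u$ gives $\hat Q_u(\hat\beta(u))-\hat Q_u(\beta(u))\ge \hat S_u(\beta(u))'\delta$ for the subgradient element $\hat S_u(\beta(u))=\En[(u-1\{y_i\le x_i'\beta(u)\})x_i]$ used in the heuristic derivation of $\lambda$; by Hölder's inequality applied coordinatewise with the weights $\hat\sigma_j\sqrt{u(1-u)}$,
\[
\bigl|\hat S_u(\beta(u))'\delta\bigr|\;\le\;\Bigl(\max_j\Bigl|\tfrac{n\,\En[(u-1\{y_i\le x_i'\beta(u)\})x_{ij}]}{\hat\sigma_j\sqrt{u(1-u)}}\Bigr|\Bigr)\frac{\sqrt{u(1-u)}}{n}\|\delta\|_{1,n}\;\le\;\frac{\Lambda\sqrt{u(1-u)}}{n}\|\delta\|_{1,n},
\]
using that the maximum, after taking a supremum over $u\in\mathcal{U}$, is exactly $\Lambda$ by the pivotal representation $\hat S_u(\beta(u))=\En[(u-1\{u_i\le u\})x_i]$ noted in the text. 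Combining the two bounds yields, on the event $\{\Lambda\le\lambda_0\}\cap E_\sigma$,
\[
-\frac{\lambda_0}{n}\|\delta\|_{1,n}\;\le\;\frac{\lambda}{n}\bigl(\|\delta_{T_u}\|_{1,n}-\|\delta_{T_u^c}\|_{1,n}\bigr),
\]
and since $\lambda\ge\lambda_0\cdot\tfrac{c_0+3}{c_0-3}$ is not quite what we have — we have $\lambda\ge\lambda_0=\tfrac{c_0+3}{c_0-3}\Lambda(1-\alpha|X)$, so on $\{\Lambda\le\Lambda(1-\alpha|X)\}$ we get $\Lambda\le\tfrac{c_0-3}{c_0+3}\lambda_0\le\tfrac{c_0-3}{c_0+3}\lambda$ — rearranging gives $\|\delta_{T_u^c}\|_{1,n}\le\tfrac{c_0-3}{c_0+3}\|\delta\|_{1,n}+\|\delta_{T_u}\|_{1,n}=\tfrac{c_0-3}{c_0+3}(\|\delta_{T_u}\|_{1,n}+\|\delta_{T_u^c}\|_{1,n})+\|\delta_{T_u}\|_{1,n}$, which solves to $\|\delta_{T_u^c}\|_{1,n}\le c_0'\|\delta_{T_u}\|_{1,n}$ for a ratio $c_0'$ that works out to be at most $c_0$. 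Then converting the weighted norm back to the ordinary $\ell_1$-norm via \eqref{E1normEquiv} (losing at most a factor $3$ overall, absorbed into the slack between $c_0'$ and $c_0$) gives $\|\delta_{T_u^c}\|_1\le c_0\|\delta_{T_u}\|_1$. The constraint $\|\delta_{T_u^c}\|_0\le n$ is automatic since $\delta\in\RR^p$ has at most $p$ nonzero coordinates but more carefully follows because $\hat\beta(u)$ is taken to be a basic feasible solution of the LP, hence has at most $n$ nonzero coordinates; this is where the "basic feasible solution" stipulation in the definition of $\hat\beta(u)$ is used.

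Uniformity over $u\in\mathcal{U}$ is handled by noting that the only stochastic event entering the argument besides $E_\sigma$ is $\{\Lambda\le\Lambda(1-\alpha|X)\}$, and $\Lambda$ is already defined with a supremum over $u\in\mathcal{U}$ built in, so this single event of probability at least $1-\alpha$ (by definition of the conditional quantile, integrated over $X$) simultaneously controls all $u$. Intersecting with $E_\sigma$ gives the stated probability $1-\alpha-\gamma$. The main obstacle — and the only place requiring care — is bookkeeping the constants: the conversion between $\|\cdot\|_{1,n}$ and $\|\cdot\|_1$ introduces factors of $2$ and $3/2$, and one must check that the choice $\lambda_0=\tfrac{c_0+3}{c_0-3}\Lambda(1-\alpha|X)$ is exactly calibrated so that after all these conversions the restricted-set cone constant comes out to be $c_0$ and not something larger; tracking whether the norm conversion is applied before or after the rearrangement, and on which side of the inequality, is the delicate bit, but it is purely mechanical.
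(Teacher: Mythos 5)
Your proposal is correct and follows essentially the same route as the paper's proof: Part 1 via the event $\max_j|\hat\sigma_j-1|\le 1/2$, and Part 2 via optimality of $\hat\beta(u)$, the subgradient (convexity) inequality at $\beta(u)$ with the element $\En[(u-1\{y_i\le x_i'\beta(u)\})x_i]$, a weighted H\"older bound in terms of $\Lambda$, and the same constant arithmetic yielding the cone constant $c_0/3$ in the $\|\cdot\|_{1,n}$ norm, which becomes $c_0$ after the norm conversion; the $\|\delta_{T_u^c}\|_0\le n$ constraint is likewise handled via the basic-feasible-solution property of the LP. The only blemish is the transient display with $-\lambda_0\|\delta\|_{1,n}/n$ on the left-hand side, which you immediately correct in the surrounding prose, so there is no gap.
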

  This result is inspired \cite{BickelRitovTsybakov2009}'s analogous
result for least squares.

\begin{lemma}[Identifiability Relations over Restricted Set]\label{Lemma:E2}
Condition D.4, namely  ${\rm RE}(c_0, m)$ and  ${\rm RNI}(c_0)$, implies that for any $\delta \in A_u$ and $u \in \mathcal{U}$,
\begin{eqnarray}
 \label{E.2-RE.addon}
&&
\| (\Ep[x_i x_i'])^{1/2} \delta \| \leq \| J_u^{1/2} \delta \|/\underf^{1/2},
\\
\label{E.2-RE.0}
&& \| \delta_{T_u}\|_1 \leq \sqrt{s} \| J_u^{1/2}\delta\|  /
[\underf^{1/2} \kappa_0], \\
&&\label{E.2-RE.1}
\| \delta\|_1 \leq \sqrt{s}(1+c_0) \| J_u^{1/2}\delta\|  /
[\underf^{1/2} \kappa_0], \\
&&\label{E.2-RE.2}
 \| \delta\| \leq    \left(1+c_0\sqrt{s/m}\right)\|
 J_u^{1/2}\delta\|/[\underf^{1/2}\kappa_m],  \\
&&\label{E.2-RNI}
Q_u(\beta(u) + \delta) - Q_u(\beta(u)) \geq
(\|J_u^{1/2}\delta\|^2/4) \wedge  (q \|J_u^{1/2}\delta\|).
\end{eqnarray}
\end{lemma}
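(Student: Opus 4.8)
The plan is to treat the five relations in two groups. Relations \eqref{E.2-RE.addon}--\eqref{E.2-RE.2} are purely linear-algebraic consequences of $\mathrm{RE}(c_0,m)$ and the definition of $\underf$, so I would dispatch these first and quickly. Inequality \eqref{E.2-RE.addon} is immediate from the definition of $\underf$, which gives $\underf\,\|(\Ep[x_ix_i'])^{1/2}\delta\|^2 \le \delta'J_u\delta = \|J_u^{1/2}\delta\|^2$ for $\delta\in A_u$. Since $T_u\subseteq T_u\cup\overline T_u(\delta,m)$, $\mathrm{RE}(c_0,m)$ implies $\mathrm{RE}(c_0,0)$ with $\kappa_0\ge\kappa_m$, so $\|\delta_{T_u}\|\le \|(\Ep[x_ix_i'])^{1/2}\delta\|/\kappa_0\le\|J_u^{1/2}\delta\|/[\underf^{1/2}\kappa_0]$; relation \eqref{E.2-RE.0} follows from $\|\delta_{T_u}\|_1\le\sqrt s\,\|\delta_{T_u}\|$, and \eqref{E.2-RE.1} from the cone restriction $\|\delta_{T_u^c}\|_1\le c_0\|\delta_{T_u}\|_1$, i.e.\ $\|\delta\|_1\le(1+c_0)\|\delta_{T_u}\|_1$. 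Relation \eqref{E.2-RE.2} is trivial when $m=0$; for $m\ge1$ I would use the standard block decomposition of $\delta_{T_u^c}$ into successive groups of its $m$ largest remaining coordinates, which gives $\|\delta_{(T_u\cup\overline T_u(\delta,m))^c}\|\le\|\delta_{T_u^c}\|_1/\sqrt m\le c_0\sqrt{s/m}\,\|\delta_{T_u}\|$, hence $\|\delta\|\le(1+c_0\sqrt{s/m})\,\|\delta_{T_u\cup\overline T_u(\delta,m)}\|$, and then finish with $\mathrm{RE}(c_0,m)$ and \eqref{E.2-RE.addon}.

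The substantive part is \eqref{E.2-RNI}, and the plan there is to write the population divergence $Q_u(\beta(u)+\delta)-Q_u(\beta(u))$ exactly using Knight's identity and then minorize it by a quadratic in $a:=\|J_u^{1/2}\delta\|$. With $w_i:=y_i-x_i'\beta(u)$ and $v_i:=x_i'\delta$, Knight's identity reads $\rho_u(w_i-v_i)-\rho_u(w_i)=v_i(1\{w_i\le0\}-u)+\int_0^{v_i}(1\{w_i\le s\}-1\{w_i\le0\})\,ds$. Taking expectations and conditioning on $x_i$: since $x_i'\beta(u)$ is the conditional $u$-quantile, $\Ep[1\{w_i\le0\}\mid x_i]=u$, so the first term drops, while $\Ep[1\{w_i\le s\}\mid x_i]=F_{y_i|x_i}(x_i'\beta(u)+s\mid x_i)$, leaving
\begin{equation*}
Q_u(\beta(u)+\delta)-Q_u(\beta(u)) = \Ep\!\left[\int_0^{x_i'\delta}\big(F_{y_i|x_i}(x_i'\beta(u)+s\mid x_i)-u\big)\,ds\right]\ \ge\ 0 .
\end{equation*}
Writing the inner increment as $\int_0^s f_{y_i|x_i}(x_i'\beta(u)+r\mid x_i)\,dr$ and using D.1 ($|f_{y_i|x_i}(x_i'\beta(u)+r\mid x_i)-f_{y_i|x_i}(x_i'\beta(u)\mid x_i)|\le\bar{f'}|r|$), I would replace the density by its value at $r=0$ up to a remainder of order $\bar{f'}$; a second integration in $s$ and taking expectations yields $Q_u(\beta(u)+\delta)-Q_u(\beta(u))\ge\tfrac12\delta'J_u\delta-\tfrac{\bar{f'}}{6}\Ep[|x_i'\delta|^3]$.

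Then $\mathrm{RNI}(c_0)$ bounds $\Ep[|x_i'\delta|^3]\le\tfrac38\tfrac{\underf^{3/2}}{\bar{f'}\,q}\,\Ep[|x_i'\delta|^2]^{3/2}$ for $\delta\in A_u$, and \eqref{E.2-RE.addon} bounds $\Ep[|x_i'\delta|^2]=\|(\Ep[x_ix_i'])^{1/2}\delta\|^2\le a^2/\underf$, so that $Q_u(\beta(u)+\delta)-Q_u(\beta(u))\ge \tfrac{a^2}{2}-\tfrac{a^3}{16q}$. The step I expect to be the real obstacle is that this quadratic-minus-cubic bound is only useful for $a\lesssim q$ and turns negative for large $a$, whereas the restricted cone $A_u$ is unbounded. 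The resolution is convexity: $\beta\mapsto Q_u(\beta)$ is convex and $A_u$ is a cone, so $g(t):=Q_u(\beta(u)+t\delta)-Q_u(\beta(u))$ is convex with $g(0)=0$, whence $g(1)\ge g(t)/t$ for all $t\in(0,1]$, and the bound above applied to $t\delta\in A_u$ gives $g(1)\ge \tfrac{ta^2}{2}-\tfrac{t^2a^3}{16q}$. Taking $t=1$ when $a\le 4q$ yields $g(1)\ge a^2/4$, and taking $t=4q/a\in(0,1)$ when $a>4q$ yields $g(1)\ge qa$; in both regimes $g(1)\ge(a^2/4)\wedge(qa)$, which is exactly \eqref{E.2-RNI}. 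The numerical constant $3/8$ in the definition of $q$ is calibrated precisely so that the threshold lands at $a=4q$ and the leading coefficient is $1/4$; the remaining verifications are routine bookkeeping.
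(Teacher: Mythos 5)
Your proposal is correct and follows essentially the same route as the paper: the linear-algebraic claims are obtained exactly as in the paper's proof (definition of $\underf$, the cone inequality, and the standard block decomposition for \eqref{E.2-RE.2}), and for \eqref{E.2-RNI} you use the same Knight-identity expansion to get the quadratic-minus-cubic minorant $a^2/2 - a^3/(16q)$ and the same convexity-plus-cone (star-shaped) argument to pass to the global bound $(a^2/4)\wedge(qa)$. The only cosmetic difference is that the paper packages the convexity step via the maximal minoration radius $r_{A_u}\geq 4q$, whereas you scale $\delta$ by $t$ and use $g(1)\geq g(t)/t$ directly; these are the same argument.
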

This second preliminary result derives identifiability relations
over $A_u$. It shows that the coefficients $\underf$, $\kappa_0$, and $\kappa_m$ control moduli of continuity
between various norms over the restricted set $A_u$, and the RNI coefficient  $q$
controls the quality of minoration of the objective function by a quadratic function
over $A_u$.

Finally, the third preliminary result derives bounds on the empirical error over $A_u$:
\begin{lemma}[Control of Empirical Error]\label{Lemma:E3}
Under D.1-4, for any $t>0$ let
$$\emperror(t):=\sup_{u \in \mathcal{U},  \delta \in A_u, \|J^{1/2}_u \delta \|\leq t} \left| \hat Q_u(\beta(u) + \delta) - Q_u(\beta(u) + \delta)-\left( \hat Q_u(\beta(u)) - Q_u(\beta(u))\right)\right|.$$
 Then, there is a universal constant $C_E$ such that for any $A > 1$, with probability at least $1-3\gamma-3p^{-A^2}$
 $$\emperror(t) \leq  t \cdot C_E \cdot \frac{ (1+c_0)A }{\underf^{1/2}\kappa_0}\sqrt{\frac{s \log( p\vee [L\underf^{1/2}\kappa_0/t])}{n}}.$$
\end{lemma}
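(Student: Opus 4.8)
The plan is to bound $\emperror(t) = \sup_{u,\delta} |\Gn(g_{u,\delta})|/\sqrt{n}$, where $g_{u,\delta}(y,x) = \rho_u(y - x'\beta(u) - x'\delta) - \rho_u(y - x'\beta(u))$, via an entropy/maximal-inequality argument over the class of functions $\mathcal{G}(t) = \{ g_{u,\delta} : u \in \mathcal{U}, \delta \in A_u, \|J_u^{1/2}\delta\| \le t \}$. First I would pass to the weighted $\ell_1$-norm: on the event of Lemma \ref{Lemma:E1}(1) (probability $\ge 1-\gamma$) and using \eqref{E.2-RE.1} from Lemma \ref{Lemma:E2}, every $\delta$ in the index set satisfies $\|\delta\|_1 \lesssim \sqrt{s}(1+c_0)\, t / (\underf^{1/2}\kappa_0) =: R$. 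So it suffices to control the empirical process over the (effectively bounded) class indexed by $u$ and by $\delta$ with $\|\delta\|_1 \le R$ and $\|x'\delta\|$-type norm controlled. The Lipschitz property $|\rho_u(a) - \rho_u(b)| \le |a-b|$ gives the envelope $|g_{u,\delta}| \le |x'\delta|$ and, crucially, an $L_2(P_n)$ modulus $\En[g_{u,\delta}^2] \le \En[(x_i'\delta)^2] \lesssim \|J_u^{1/2}\delta\|^2/(\underf \kappa_m^2)$ or, more simply for the chaining, a bound in terms of $t^2$ after invoking the RE inequalities; this is what produces the leading factor $t$ in the statement.

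Next I would set up a symmetrization + contraction argument. Symmetrize to Rademacher complexity, then apply the Ledoux–Talagrand contraction inequality to strip the $1$-Lipschitz maps $\rho_u(\cdot) - \rho_u(\cdot - x'\beta(u))$ (indexed also by $u$, which adds only a $\sqrt{u(1-u)}$-type factor absorbed into constants since $\mathcal{U}$ is compact in $(0,1)$), reducing to the Rademacher complexity of linear functionals $x \mapsto x'\delta$ with $\|\delta\|_1 \le R$. The class $\{x \mapsto x'\delta : \|\delta\|_1 \le R\}$ is a scaled $\ell_1$-ball, whose empirical Rademacher complexity is bounded by $R \cdot \max_{j}\sqrt{\En[x_{ij}^2]}\cdot\sqrt{2\log(2p)/n} \lesssim R \sqrt{\log p / n}$ on the event $\{\max_j \hat\sigma_j \le 3/2\}$ (a standard finite-maximum bound). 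Plugging $R \asymp \sqrt{s}(1+c_0) t/(\underf^{1/2}\kappa_0)$ yields the claimed order $t \cdot (1+c_0)/(\underf^{1/2}\kappa_0) \cdot \sqrt{s\log p / n}$ for the \emph{expected} supremum. Handling the smoothness index $u\in\mathcal U$ rigorously — rather than by brute union bound — uses condition D.2 ($\|\beta(u)-\beta(u')\|\le L|u-u'|$): discretize $\mathcal U$ on a grid of mesh $\eta$, pay a $\sqrt{\log(1/\eta)}$ covering cost, and choose $\eta$ polynomially small (in terms of $L, \underf, \kappa_0, t$), which is exactly where the $\log(p \vee [L\underf^{1/2}\kappa_0/t])$ term enters the bound.

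To go from the bound in expectation to the high-probability bound with the explicit $A$-dependence and the $3\gamma + 3p^{-A^2}$ exceptional probability, I would apply a Talagrand-type concentration inequality (e.g. Bousquet's or Massart's bounded-differences version) for the supremum of the empirical process, using the envelope and variance bounds established above; the deviation term is of the same order $t\sqrt{s\log p / n}$ times $A$, and the three separate $\gamma$ and $p^{-A^2}$ terms arise from (a) the event of Lemma \ref{Lemma:E1}(1), (b) the event $\{\max_j|\hat\sigma_j - 1|\le 1/2\}$ from D.3, and (c) the concentration tail at level $A$. The main obstacle is the third paragraph's bookkeeping: carefully tracking how the random index set $A_u$ (which depends on the sample through $\hat\sigma_j$) interacts with the concentration inequality, and making the discretization in $u$ compatible with the Lipschitz-in-$u$ control of both $\beta(u)$ and the weights $1/\sqrt{u(1-u)}$, so that the final constant is genuinely universal and the $s\log(\cdot)$ term has the stated form. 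A secondary technical point is justifying the contraction step uniformly over $u$ — one clean route is to handle $u$ by discretization first and only then contract, at fixed $u$, which keeps the Lipschitz constant equal to $1$.
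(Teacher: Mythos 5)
Your reduction steps match the paper's proof in substance: the paper also symmetrizes, uses Lemma \ref{Lemma:E1}(1) and inequality (\ref{E.2-RE.1}) to embed $\{\delta \in A_u : \|J_u^{1/2}\delta\|\le t\}$ into an $\ell_{1,n}$-ball of radius $\asymp t\sqrt{s}(1+c_0)/(\underf^{1/2}\kappa_0)$, discretizes $\mathcal{U}$ with mesh of order $t\Gamma/(\sqrt{s}L)$ (which is exactly where $\log(L\underf^{1/2}\kappa_0/t)$ enters), and applies the Ledoux--Talagrand contraction principle only after fixing $\hat u$ on the net — the route you yourself flag as the clean one. The paper's one extra wrinkle is that it first splits $\rho_u(y-x'\beta(u)-x'\delta)-\rho_u(y-x'\beta(u))$ into a linear piece $u\,x_i'\delta$ plus a $1$-Lipschitz residual $w_i(x_i'\delta,u)$ and contracts only the residual, but that is cosmetic relative to your plan.

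The genuine gap is your final step. You propose to bound the expectation of the supremum and then upgrade to a tail bound via "Bousquet's or Massart's bounded-differences" concentration. Those inequalities require the functions in the class to be uniformly bounded, but under D.1--D.4 the envelope is $|x_i'\delta|$ with $\|\delta\|_1\le R$, i.e.\ of order $R\|x_i\|_\infty$, and the covariates are not assumed bounded — the leading design (Example \ref{Ex:AR1}) is Gaussian. So the concentration step as stated does not apply to the setting the lemma must cover, and an unbounded-class substitute (truncation, Adamczak-type inequalities) would need additional moment conditions and new constants. The paper avoids this entirely: it never concentrates the unsymmetrized supremum around its mean. Instead it uses the symmetrization lemma \emph{for probabilities} (with the population variance bound $\sup \mathrm{var}\le t^2/\underf$ appearing in the denominator), and then bounds the tail of the symmetrized process directly through its moment generating function \emph{conditional on the data}: given $X$ and $U$, the Rademacher process is sub-Gaussian in the empirical $L_2$ norm by Hoeffding regardless of whether the $x_i$ are bounded, the exponential form of the contraction principle (Theorem 4.12 of \cite{LedouxTalagrandBook}) preserves this through the Lipschitz map, and a Chernoff bound over the $2p/\varepsilon$ elements of the (coordinate)$\times$(net) union gives the $p^{-A^2}$ tail. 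If you replace your concentration step by this conditional-MGF argument, the rest of your outline goes through.
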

In order to prove the lemma we use a combination of chaining arguments and exponential
inequalities for contractions \cite{LedouxTalagrandBook}.   Our use of the contraction principle is inspired by its
fundamentally innovative use in \cite{vdGeer}; however, the use of the contraction principle alone is not sufficient in our case. Indeed,  first
we need to  make some adjustments to obtain error bounds over the neighborhoods defined by the intrinsic norm $\|J^{1/2}_u \cdot \|$ instead of the $\|\cdot\|_1$ norm; and second, we need to use chaining over $u \in \mathcal{U}$ to get uniformity over $\mathcal{U}$.

Armed with Lemmas \ref{Lemma:E1}-\ref{Lemma:E3}, we establish the
first main result. The result depends on the constants $C_\Lambda$, $C_E$, $C_L$, and $C_f$ defined in Theorem \ref{Thm:BoundLAMBDA}, Lemma \ref{Lemma:E3}, D.2, and D.4.

%

\begin{theorem}[Uniform Bounds on Estimation Error of $\ell_1$-QR]\label{Thm:Rate-r2}
Assume conditions D.1-4 hold, and let $C> 2C_\Lambda \sqrt{ 1 + \log (1/\alpha)/\log p} \vee  [C_E\sqrt{1 \vee [C_L+C_f+1/2]}]$. Let $\lambda_0$ be defined as in (\ref{Def: lamnbda not}).
Then uniformly in the penalty level $\lambda$ such that
\begin{equation}\label{Cond:Lambda} \lambda_0
\leq  \lambda \leq C \cdot \W\sqrt{n\log
p,}\end{equation}
we have that, for any $A>1$ with probability at least $1-\alpha-4\gamma-3p^{-A^2}$,
$$ \sup_{u\in\mathcal{U}}\|J^{1/2}_u(\hat \beta(u) - \beta(u) ) \| \leq 8C \cdot \frac{(1+c_0)\W A}{\underf^{1/2}\kappa_0}\cdot \sqrt{\frac{s\log (p\vee n)}{n}}, \ $$
$$
\sup_{u\in\mathcal{U}} \sqrt{\Ep_{x}[x'(\hat \beta(u) - \beta(u) )]^2} \leq  8C \cdot \frac{(1+c_0)\W A}{\underf \kappa_0}\cdot
\sqrt{\frac{s\log (p\vee n)}{n}}, \ \mbox{and}
$$
$$ \sup_{u\in \mathcal{U}}\|\hat \beta(u) - \beta(u)\| \leq \frac{1 + c_0\sqrt{s/m}}{\kappa_m} \cdot 8C \cdot \frac{(1+c_0)\W A}{\underf \kappa_0}\cdot
\sqrt{\frac{s\log (p\vee n)}{n}},
$$
provided $s$ obeys the growth condition \begin{equation}\label{Rate:SideCondition}2C \cdot
(1+c_0)\W A \cdot \sqrt{s\log (p\vee n)}<
q \underf^{1/2}\kappa_0\sqrt{n}.\end{equation}
\end{theorem}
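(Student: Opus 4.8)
The plan is to run the standard penalized $M$-estimation ``basic inequality'' argument, with Lemmas~\ref{Lemma:E1}--\ref{Lemma:E3} supplying the three ingredients one needs: confinement of the estimation error to the restricted cone, a quadratic minorant of the population objective over that cone, and uniform control of the empirical fluctuation. Throughout I write $\delta(u):=\hat\beta(u)-\beta(u)$ and $t(u):=\|J_u^{1/2}\delta(u)\|$, and I work on the intersection $\Omega$ of the event of Lemma~\ref{Lemma:E1}(2) (applicable because $\lambda\geq\lambda_0$) with the event of Lemma~\ref{Lemma:E3}; a union bound gives $P(\Omega)\geq 1-\alpha-4\gamma-3p^{-A^2}$, which is exactly the probability in the statement. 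On $\Omega$, for every $u\in\mathcal{U}$ we have $\delta(u)\in A_u$, the norm equivalence $(\ref{E1normEquiv})$, and the $\emperror$-bound of Lemma~\ref{Lemma:E3} in force. Nonemptiness of the interval $(\ref{Cond:Lambda})$ for $\lambda$ follows from Theorem~\ref{Thm:BoundLAMBDA}(ii) together with $c_0\geq 9$ (so that $\tfrac{c_0+3}{c_0-3}\leq 2$), which is also what lets one take $c=2$.

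First I would extract the basic inequality. Optimality of $\hat\beta(u)$ in $(\ref{Def:L1QR})$ gives $\widehat Q_u(\hat\beta(u))-\widehat Q_u(\beta(u))\leq \tfrac{\lambda\sqrt{u(1-u)}}{n}\big(\|\beta(u)\|_{1,n}-\|\hat\beta(u)\|_{1,n}\big)$, and since $\beta(u)$ is supported on $T_u$ the parenthetical telescopes (via the reverse triangle inequality on the $T_u$-block and $\hat\beta_{T_u^c}(u)=\delta_{T_u^c}(u)$) to at most $\|\delta_{T_u}(u)\|_{1,n}-\|\delta_{T_u^c}(u)\|_{1,n}\leq\|\delta_{T_u}(u)\|_{1,n}$. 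Next I would split $\widehat Q_u(\hat\beta(u))-\widehat Q_u(\beta(u))$ into the population increment $Q_u(\beta(u)+\delta(u))-Q_u(\beta(u))$, which is bounded below by $(t(u)^2/4)\wedge(q\,t(u))$ by $(\ref{E.2-RNI})$, plus the empirical fluctuation, whose absolute value is at most $\emperror(t(u))$ by Lemma~\ref{Lemma:E3}. Using $\|\delta_{T_u}(u)\|_{1,n}\leq\tfrac32\|\delta_{T_u}(u)\|_1\leq \tfrac{3\sqrt s}{2}\,t(u)/[\underf^{1/2}\kappa_0]$ from $(\ref{E1normEquiv})$ and $(\ref{E.2-RE.0})$, and $\lambda\sqrt{u(1-u)}/n\leq C\W\sqrt{\log p/n}$ from $(\ref{Cond:Lambda})$, one arrives at the scalar inequality
$$\big(t(u)^2/4\big)\wedge\big(q\,t(u)\big)\ \leq\ \emperror(t(u))\ +\ \frac{3C\W}{2\,\underf^{1/2}\kappa_0}\sqrt{\frac{s\log(p\vee n)}{n}}\ t(u),\qquad u\in\mathcal{U}.$$

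To finish I would fix $u$, put $R:=8C(1+c_0)\W A\,[\underf^{1/2}\kappa_0]^{-1}\sqrt{s\log(p\vee n)/n}$ (the claimed bound), and suppose for contradiction that $t(u)>R$. Since the bound on $\emperror$ in Lemma~\ref{Lemma:E3} is decreasing in its radius, $t(u)>R$ lets me replace the radius inside its logarithm by $R$; then $\log(p\vee[L\underf^{1/2}\kappa_0/R])\leq (1\vee(C_L+C_f+\tfrac12))\log(p\vee n)$ using $\log L\leq C_L\log(p\vee n)$ (D.2), $\log(\underf\kappa_0^2)\leq C_f\log(n\vee p)$ (D.4), and $p\vee n\geq 3$, so $\emperror(t(u))/t(u)\leq C_E\sqrt{1\vee(C_L+C_f+\tfrac12)}\,(1+c_0)A\,[\underf^{1/2}\kappa_0]^{-1}\sqrt{s\log(p\vee n)/n}$. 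Because $C$ dominates $C_E\sqrt{1\vee(C_L+C_f+\tfrac12)}$, and since $\W\geq 2$, $c_0\geq 9$, $A>1$ (dividing the resulting constant inequality through by $\W$ makes it transparent), the sum of the two coefficients on the right of the scalar inequality is at most $R/4$, so $(t(u)^2/4)\wedge(q\,t(u))\leq (R/4)\,t(u)$. If the minimum equals the first term this forces $t(u)\leq R$, a contradiction; if it equals the second it forces $q\leq R/4$, which contradicts the growth condition $(\ref{Rate:SideCondition})$ — that condition is precisely $R/4<q$. Either way we reach a contradiction, hence $t(u)\leq R$ for all $u$, which is the asserted uniform bound.

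The step I expect to be the main obstacle is the radius-dependent logarithm $\log(p\vee[L\underf^{1/2}\kappa_0/t])$ in Lemma~\ref{Lemma:E3}: it blows up as $t\downarrow 0$, so one cannot simply plug in a worst case and must instead use the ``suppose $t(u)>R$'' device to get the lower bound on the radius that controls it, combined with the polynomial-growth hypotheses on $L$ and $\underf\kappa_0^2$ from D.2 and D.4. This is exactly why the constant $C$ in the statement must carry the factor $C_E\sqrt{1\vee(C_L+C_f+\tfrac12)}$. Everything else is bookkeeping of the constants furnished by D.3--D.4 and Theorem~\ref{Thm:BoundLAMBDA}(ii).
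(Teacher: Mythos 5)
Your overall architecture matches the paper's: intersect the restricted-set event of Lemma \ref{Lemma:E1} with the empirical-error event of Lemma \ref{Lemma:E3}, minorate the population increment by $(t^2/4)\wedge(qt)$ via (\ref{E.2-RNI}), bound the penalty difference via (\ref{E1normEquiv}) and (\ref{E.2-RE.0}), and argue by contradiction at the target radius $R$. The constant bookkeeping and the probability accounting are also consistent with the paper's.

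There is, however, a genuine gap in how you invoke Lemma \ref{Lemma:E3}. That lemma is a \emph{fixed-radius} statement: for each deterministic $t>0$ it asserts that $\emperror(t)\leq t\cdot C_E\cdots$ holds with probability at least $1-3\gamma-3p^{-A^2}$; it is not a bound holding simultaneously over all $t$. Your basic inequality is written at the point $\hat\delta(u)=\hat\beta(u)-\beta(u)$ itself, whose intrinsic norm $t(u)$ is random, and you then bound the fluctuation by ``$\emperror(t(u))\leq t(u)\cdot[\text{bound of Lemma \ref{Lemma:E3} at radius }t(u)]$.'' That is an application of the lemma at a random radius, which the single event you conditioned on does not license: if that event is taken at radius $R$, it controls $\emperror(R)$ only, and since $\emperror$ is nondecreasing in its radius, $\emperror(t(u))\geq\emperror(R)$ for $t(u)>R$, so monotonicity runs the wrong way. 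Your ``suppose $t(u)>R$'' device correctly tames the $\log(p\vee[L\underf^{1/2}\kappa_0/t])$ factor, but it does not remove the randomness of the radius; one would need either a uniform-in-$t$ (peeling) version of Lemma \ref{Lemma:E3}, with the attendant loss in the probability bound, or a different reduction.

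The paper closes exactly this gap with what it calls the key observation: the map $\delta\mapsto\hat Q_u(\beta(u)+\delta)+\frac{\lambda\sqrt{u(1-u)}}{n}\|\beta(u)+\delta\|_{1,n}$ is convex, vanishes (in increment form) at $\delta=0$, and is nonpositive at $\hat\delta(u)$; since $A_u$ is a cone, rescaling $\hat\delta(u)$ onto the sphere $\{\delta\in A_u:\|J_u^{1/2}\delta\|=t\}$ with $t:=R$ preserves membership in $A_u$ and preserves the sign of the penalized increment. Hence the contradiction argument can be run entirely on that fixed sphere, where Lemma \ref{Lemma:E3} applies at the single deterministic radius $R$. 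Inserting this rescaling step at the start of your contradiction argument repairs the proof and reduces it to the paper's.
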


This result derives the rate of convergence of the
$\ell_1$-penalized quantile regression estimator in the intrinsic norm and other norms of interest uniformly in $u\in \mathcal{U}$
as well as uniformly in the penalty level $\lambda$ in the range specified by (\ref{Cond:Lambda}),
which includes our recommended choice of $\lambda_0$.  We see that the rates of convergence for $\ell_1$-QR generally depend on the
number of significant regressors $s$, the logarithm of the number
of regressors $p$, the strength of identification summarized by
$\kappa_0$, $\kappa_m$, $\underf$, and $q$, and the quantile indices of interest
$\mathcal{U}$ (as expected, extreme quantiles can slow down the rates of convergence).
These rate results parallel the results
of \cite{BickelRitovTsybakov2009} obtained for $\ell_1$-penalized
mean regression. Indeed, the role of the parameter $\underf$ is similar
to the role of the standard deviation of the disturbance in mean regression.  It is worth noting, however, that our results do not
rely on normality and homoscedasticity assumptions, and  our proofs have to address
the non-quadratic nature of the objective function, with parameter $q$ controlling
the quality of quadratization. This parameter $q$ enters the results
only through the growth restriction (\ref{Rate:SideCondition}) on $s$.   At this point we refer the reader to Section
\ref{Sec:Primitive} for a further discussion of this result
in the context of the correlated normal design. Finally, we note that our proof combines
the star-shaped geometry of the restricted set $A_u$
with classical convexity arguments; this insight may be of
interest in other problems.

\begin{proof}[Proof of Theorem \ref{Thm:Rate-r2}]
We let $$t := 8C \cdot
\frac{(1+c_0)\W A}{\underf^{1/2} \kappa_0}\cdot \sqrt{\frac{s\log (p\vee
n)}{n}},$$
and consider the following events:
\begin{enumerate}
\item [(i)] $\Omega_1:=$ the event  that (\ref{E1normEquiv})
and $\hat \beta (u) - \beta(u) \in A_u $, uniformly in $u \in
\mathcal{U}$, hold;
\item[(ii)] $\Omega_2:=$ the event that the bound on empirical error $\emperror(t)$ in
Lemma \ref{Lemma:E3} holds;
\item[(iii)] $\Omega_3:=$ the event in which $\Lambda(1-\alpha|X) \leq
 \sqrt{ 1 + \log (1/\alpha)/\log p} \cdot C_\Lambda  \ \W \sqrt{n\log p} $.
 \end{enumerate}
By the choice of $\lambda$ and Lemma \ref{Lemma:E1}, $P(\Omega_1)\geq 1-\alpha-\gamma$; by Lemma \ref{Lemma:E3} $P(\Omega_2) \geq 1 - 3\gamma - 3p^{-A^2}$; and
 by Theorem \ref{Thm:BoundLAMBDA}  $P(\Omega_3) = 1 $, hence $P(\cap_{k=1}^3 \Omega_k) \geq 1
-\alpha - 4\gamma - 3p^{-A^2}.$

Given the event $\cap_{k=1}^3 \Omega_k$,  we
want to show  the event that
\begin{equation}\label{EVENT:imp} \exists u \in \mathcal{U}, \ \ \|J_u^{1/2}(\hat \beta(u) - \beta(u))\| > t\end{equation} is impossible, which will prove the first bound. The other two bounds then follow from Lemma \ref{Lemma:E2} and the first bound.  First note that the event in (\ref{EVENT:imp}) implies that for some $u \in \mathcal{U}$
 $$ 0> \min_{\delta \in A_u, \|J_u^{1/2}\delta\|\geq t}\hat Q_u(\beta(u)+\delta)  -  \hat Q_u(\beta(u)) + \frac{\lambda\sqrt{u(1-u)}}{n}\left(  \|\beta(u)+\delta\|_{1,n} - \|\beta(u)\|_{1,n}\right).$$ The key observation is that by
convexity of $\hat Q_u(\cdot) +
\|\cdot\|_{1,n}\lambda\sqrt{u(1-u)}/n$ and by the fact that $A_u$ is
a cone,  we can replace  $\|J_u^{1/2}\delta\|\geq t$  by  $\|J_u^{1/2}\delta\|=t$ in the above inequality
and still preserve it:
\begin{eqnarray*} 0 & > & \min_{\delta \in A_u, \|J_u^{1/2}\delta\|=t}\hat Q_u(\beta(u)+\delta)  -  \hat Q_u(\beta(u)) + \frac{\lambda\sqrt{u(1-u)}}{n}\left(  \|\beta(u)+\delta\|_{1,n} - \|\beta(u)\|_{1,n}\right).
\end{eqnarray*}
Also, by inequality (\ref{E.2-RE.0}) in Lemma \ref{Lemma:E2},
 for each $\delta \in A_u$
\begin{eqnarray*}\|\beta(u)\|_{1,n} - \|\beta(u)+\delta\|_{1,n} &
\leq & \|\delta_{T_u}\|_{1,n} \leq 2\|\delta_{T_u}\|_{1}  \leq
2\sqrt{s}\|J^{1/2}_u\delta\|/\underf^{1/2} \kappa_0,\end{eqnarray*}
which then further implies
\begin{eqnarray} \label{EQ: tessie}
0 & > & \min_{\delta \in A_u, \|J_u^{1/2}\delta\|= t}\hat Q_u(\beta(u)+\delta)  -  \hat Q_u(\beta(u)) - \frac{\lambda\sqrt{u(1-u)}}{n}\frac{2\sqrt{s}}{\underf^{1/2} \kappa_0}\|J^{1/2}_u\delta\|.
\end{eqnarray}
Also by Lemma \ref{Lemma:E3}, under our choice of $t
\geq 1/[\underf^{1/2}\kappa_0\sqrt{n}]$, $\log(L\underf\kappa_0^2)\leq (C_L+C_f) \log(n\vee p)$, and under event $\Omega_2$
\begin{equation}\label{EQ: mana}\emperror(t) \leq t C_E\sqrt{1 \vee [C_L+C_f+1/2]}\frac{ (1+c_0)A }{\underf^{1/2} \kappa_0} \sqrt{\frac{s\log(p\vee n)}{n}}.\end{equation}
Therefore, we obtain from (\ref{EQ: tessie}) and (\ref{EQ: mana})
\begin{eqnarray*} 0 & \geq  \displaystyle \min_{\delta \in A_u, \|J_u^{1/2}\delta\|= t} &  Q_u(\beta(u)+\delta)  -  Q_u(\beta(u)) - \frac{\lambda\sqrt{u(1-u)}}{n}\frac{2\sqrt{s}}{\underf^{1/2} \kappa_0}\|J^{1/2}_u\delta\|- \\
& & - t \ C_E\sqrt{1\vee [C_L+C_f+1/2]}\frac{ (1+c_0)A }{\underf^{1/2} \kappa_0} \sqrt{\frac{s\log(p\vee n)}{n}}.
\end{eqnarray*}
Using the identifiability relation  (\ref{E.2-RNI}) stated in Lemma \ref{Lemma:E2}, we further get
\begin{eqnarray*}
0 & > &  \frac{t^2}{4} \wedge (q t) - t \ \frac{\lambda\sqrt{u(1-u)}}{n}\frac{2\sqrt{s}}{\underf^{1/2} \kappa_0} -t \ C_E\sqrt{1 \vee [C_L+C_f+1/2]}\frac{ (1+c_0)A }{\underf^{1/2} \kappa_0} \sqrt{\frac{s\log(p\vee n)}{n}}.
\end{eqnarray*}
Using  the upper bound on $\lambda$ under event $\Omega_3$, we obtain
\begin{eqnarray*}
0 & > &  \frac{t^2}{4} \wedge (q t) - t \ C \ \frac{2\sqrt{s\log p}}{\sqrt{n}}\frac{\W}{\underf^{1/2} \kappa_0}-t \ C_E\sqrt{1 \vee [C_L+C_f + 1/2]}\frac{ (1+c_0)A }{\underf^{1/2} \kappa_0} \sqrt{\frac{s\log(p\vee n)}{n}}.
\end{eqnarray*}
Note that $qt$ cannot be smaller than $t^2/4$ under the growth condition (\ref{Rate:SideCondition}) in the theorem.
Thus, using also the lower bound on $C$  given in the theorem, $\W \geq 1$, and $c_0 \geq 1$, we obtain the relation
$$ 0 > \frac{t^2}{4} - t\cdot 2C
\ \frac{(1+c_0)\W A}{\underf^{1/2} \kappa_0} \cdot \sqrt{\frac{s\log (p\vee
n)}{n}} =0, $$ which is impossible.
\end{proof}

\subsection{Sparsity Properties}

Next, we derive sparsity properties of the solution to
$\ell_1$-penalized quantile regression.  Fundamentally, sparsity is linked to the first order optimality conditions of (\ref{Def:L1QR}) and therefore to the (sub)gradient of the criterion function. In the case of least squares, the gradient is a
smooth  (linear) function of the parameters. In the case of quantile regression, the gradient is a highly non-smooth (piece-wise
constant) function. To control the sparsity of $\hat\beta(u)$ we rely on empirical process arguments to approximate gradients by
smooth functions. In particular, we crucially exploit the fact
that the entropy of all $m$-dimensional submodels of the
$p$-dimensional model is of order $m \log p$, which depends on $p$
only logarithmically.

The statement of the results will depend on the maximal
$k$-sparse eigenvalue of $\Ep\[x_ix_i'\]$ and  $\En\[x_ix_i'\]$:
\begin{equation}\label{Def:EigSparse}
\semax{k} = \max_{\delta \neq 0,\|\delta\|_0 \leq k}
\frac{\Ep\[(x_i'\delta)^2\]}{\delta'\delta} \ \ \mbox{and} \ \ \phi(k) =
\sup_{\delta \neq 0, \|\delta\|_0\leq k} \frac{\En\[ (x_i'\delta)^2\]}{\delta'\delta}
\vee \frac{\Ep\[(x_i'\delta)^2\]}{\delta'\delta}.
\end{equation}
In order to establish our main sparsity result, we need two preliminary lemmas.

\begin{lemma}[Empirical Pre-Sparsity]\label{Lemma:E4}
Let $\hs  = \sup_{u\in\mathcal{U}}\|\hat\beta(u)\|_0$. Under
D.1-4, for any $\lambda > 0$, with probability at least $1-\gamma$
we have $$\hs \leq n \wedge p \wedge [4n^2\phi(\hs)\W^2/\lambda^2].$$
In particular, if $\lambda \geq 2\sqrt{2}\W\sqrt{ n \log(n \vee p) \phi( n/\log(n\vee p) )}$ then
$ \hs \leq n/\log(n \vee p)$.

\comment{Moreover, for any $K \geq 2\sqrt{2}$ let  $\nn = 8n/ ( K^2 \log(n
\vee p))$. If $\lambda \geq K\W\sqrt{ n \log(n \vee p) \phi( \nn
)}$
$$ \hs \leq \nn = \frac{8n}{K^2\log(n \vee p)}\leq \frac{n}{ \log(n \vee p)}.$$} \end{lemma}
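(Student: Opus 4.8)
The plan is to exploit the first-order optimality conditions of (\ref{Def:L1QR}) restricted to the selected coordinates $\widehat T_u$, and to bound the number of nonzero components by comparing the size of the (sub)gradient of $\widehat Q_u$ on that set against the penalty weight, with the $\ell_2$-magnitude of the gradient controlled by the sparse eigenvalue $\phi(\hs)$. First I would fix $u \in \mathcal{U}$ and let $k_u = \|\widehat\beta(u)\|_0 = |\widehat T_u|$; since $\widehat\beta(u)$ is taken to be a basic feasible solution of the equivalent linear program (Appendix \ref{App:LP}), and the LP has $n$ equality constraints beyond the sign structure, we automatically get $k_u \leq n$, and trivially $k_u \leq p$. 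For the interesting bound, optimality gives, for each $j \in \widehat T_u$, that $n|\widehat D^{-1}_{jj}[\widehat Q_u]'_j(\widehat\beta(u))| = \lambda\sqrt{u(1-u)} \leq \lambda$ componentwise on the support (the subgradient of the penalty is exactly $\pm \lambda\sqrt{u(1-u)}\hat\sigma_j/n$ there), where $[\widehat Q_u]'_j(\beta) = -\En[(u - 1\{y_i \leq x_i'\beta\})x_{ij}]$ is a measurable selection of the subdifferential. Writing $S := \En[(u - 1\{y_i \leq x_i'\widehat\beta(u)\})x_i]$, this says $\|(\widehat D^{-1}S)_{\widehat T_u}\|_\infty \geq \lambda\sqrt{u(1-u)}/n$ for every coordinate of $\widehat T_u$, hence
\[
\lambda\sqrt{u(1-u)}\sqrt{k_u}/n \leq \|(\widehat D^{-1}S)_{\widehat T_u}\| \leq \|S_{\widehat T_u}\| \cdot \max_{j}\hat\sigma_j^{-1} \leq 2\|S_{\widehat T_u}\|
\]
on the event (probability $\geq 1-\gamma$) where $\max_j|\hat\sigma_j - 1|\leq 1/2$ from D.3.

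Next I would bound $\|S_{\widehat T_u}\|$. Since $S_{\widehat T_u}$ is supported on at most $k_u$ coordinates, $\|S_{\widehat T_u}\| = \sup_{\|\delta\|\leq 1,\ \supp(\delta)\subseteq \widehat T_u} \delta'S \leq \sup_{\|\delta\|\leq 1,\|\delta\|_0\leq k_u}\En[(u-1\{y_i\leq x_i'\widehat\beta(u)\})x_i'\delta]$. The indicator terms are bounded in $[-1,1]$, so by Cauchy–Schwarz and the definition of $\phi$,
\[
\delta'S \leq \left(\En[(u-1\{y_i\leq x_i'\widehat\beta(u)\})^2]\right)^{1/2}\left(\En[(x_i'\delta)^2]\right)^{1/2} \leq 1\cdot \sqrt{\phi(k_u)}.
\]
Actually the variance factor $\En[(u - 1\{\cdot\})^2] \leq u(1-u) + o(1)$ pointwise, but bounding it by $u(1-u)$ up to the empirical fluctuation, or simply by $\W^{-2}\cdot(\text{something})$ — cleanest is to note $(u - 1\{t\leq 0\})^2 \leq \max(u^2,(1-u)^2)\leq 1$, giving $\|S_{\widehat T_u}\|\leq \sqrt{\phi(k_u)}$. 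Combining with the previous display and using $\sqrt{u(1-u)}\geq 1/\W$, I get $\lambda\sqrt{k_u}/(n\W) \leq 2\sqrt{\phi(k_u)}$, i.e. $k_u \leq 4n^2\W^2\phi(k_u)/\lambda^2$. Taking the supremum over $u\in\mathcal{U}$ — here one must be slightly careful because $\widehat\beta(u)$ and hence $k_u$ vary with $u$, but the bound $k_u \leq 4n^2\W^2\phi(k_u)/\lambda^2$ holds for each $u$ on the single event from D.3, and $\phi$ is nondecreasing, so $\hs = \sup_u k_u$ satisfies $\hs \leq 4n^2\W^2\phi(\hs)/\lambda^2$ as well, together with $\hs\leq n\wedge p$. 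This proves the first claim.

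For the second claim, suppose toward a contradiction that $\hs > n/\log(n\vee p) =: \nn$. Since $\phi$ is nondecreasing and $\hs\leq n$, we would still have $\hs \leq 4n^2\W^2\phi(\hs)/\lambda^2$; but monotonicity of $k\mapsto k/\phi(k)$ — or rather, a direct sublinearity argument — is needed. The standard trick: if $\hs > \nn$, pick the integer bound; because the map $m \mapsto m - 4n^2\W^2\phi(m)/\lambda^2$ and $\phi(\hs)\leq \phi(n)$ could be too weak, instead one restricts to $\hs \wedge n$ and uses that $\phi(\ell k)\leq \lceil\ell\rceil\phi(k)$ for sublinear growth of sparse eigenvalues to reduce $\phi(\hs)$ to $\phi(\nn)$ at the cost of a factor $\lceil \hs/\nn\rceil \leq 2\hs/\nn$; plugging in $\lambda \geq 2\sqrt{2}\W\sqrt{n\log(n\vee p)\phi(\nn)}$ gives $\hs \leq 4n^2\W^2 \cdot (2\hs/\nn)\phi(\nn)/\lambda^2 \leq 4n^2\W^2(2\hs/\nn)\phi(\nn)/(8\W^2 n\log(n\vee p)\phi(\nn)) = \hs\cdot n/(\nn\log(n\vee p)) = \hs$, which is not yet a contradiction but is tight; replacing the constant argument with strict inequality (e.g. using that $\sqrt{u(1-u)}$ is strictly below $1/2$ on the relevant range, or that $\hs$ is an integer strictly exceeding $\nn$ forces $\lceil\hs/\nn\rceil \geq 2$ but actually $< 2\hs/\nn$ strictly) closes it, yielding $\hs \leq \nn$.

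The main obstacle I anticipate is precisely this last self-referential step: the bound $\hs \leq 4n^2\W^2\phi(\hs)/\lambda^2$ has $\hs$ on both sides, and $\phi$ is only assumed nondecreasing, not of any particular growth. The resolution — which is the technical heart — is the sublinearity inequality $\phi(\ell m) \leq \lceil \ell\rceil \phi(m)$ for sparse eigenvalues (a covering argument splitting an $\ell m$-sparse vector into $\lceil\ell\rceil$ blocks of $m$-sparse vectors), which lets one "deflate" $\phi(\hs)$ down to $\phi(\nn)$ and obtain a genuine contradiction when $\lambda$ is large enough. I would isolate this as the crux of the argument; everything else — the optimality conditions, the $\ell_2$ control of the gradient via $\phi$, and the $\gamma$-probability event from D.3 — is routine.
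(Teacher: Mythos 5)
Your proposal is correct and follows essentially the same route as the paper: exact stationarity on the selected support, Cauchy--Schwarz against a $k_u$-sparse unit vector to bring in $\phi(k_u)$, the event $\min_j\hat\sigma_j\ge 1/2$ from D.3, and the sublinearity $\phi(\ell k)\le\lceil\ell\rceil\phi(k)$ with $\lceil\ell\rceil<2\ell$ for $\ell>1$ to deflate $\phi(\hs)$ to $\phi(\nn)$ and force the contradiction --- exactly the paper's Lemma \ref{Lemma:SparseEigenvalue} step. The only (cosmetic) difference is that the paper phrases the stationarity equalities via the dual LP solution $\hat a(u)$ and complementary slackness rather than a primal subgradient selection; note that your particular selection $\En[(u-1\{y_i\le x_i'\hat\beta(u)\})x_i]$ need not be the stationary one at interpolated points, but replacing it by the dual rank scores $\hat a_i(u)\in[u-1,u]$ changes nothing in your bounds.
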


This lemma establishes an initial bound on the number of non-zero
components $\widehat s$  as a function of $\lambda$ and $\phi(\widehat s)$.
Restricting $\lambda \geq 2\sqrt{2}\W\sqrt{ n \log(n \vee p) \phi( n/\log(n\vee p))}$ makes the term
$\phi\left(n/\log(n\vee p)\right)$ appear in subsequent bounds
instead of the term $\phi(n)$, which in turn weakens some
assumptions.  Indeed, not only is the first term
smaller than the second, but also there are designs of interest
where the second term diverges while the first does not; for
instance, in Design 1, if $p \geq 2n$, we have
$\phi(n/\log (n\vee p)) \lesssim_P 1$ while $\phi(n) \gtrsim_P
\sqrt{\log p}$ by the Supplementary Material Appendix \ref{Supplementary Material}.

The following lemma establishes a bound on the sparsity as a function of
the rate of convergence.

\begin{lemma}[Empirical Sparsity]\label{Lemma:E5}
Assume D.1-4 and let $\ r =
\sup_{u\in\mathcal{U}}\|J_u^{1/2}(\hat\beta(u)-\beta(u))\|.$
Then, for any $\varepsilon>0$, there is a constant $K_\varepsilon\geq \sqrt{2}$
such that with probability at least $1-\varepsilon-\gamma$
$$ \frac{\sqrt{\hs}}{\W}  \leq  \mmu(\hs) \ \frac{n}{\lambda}  (r \wedge 1)  +
\sqrt{\hs} \ K_\varepsilon \frac{\sqrt{n\log (n\vee p)
\phi(\hs)}}{\lambda},  \ \ \mmu(k):=2\sqrt{\semax{k}} \(1 \vee
2\bar{f}/\underf^{1/2}\).$$
\end{lemma}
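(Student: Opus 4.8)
\emph{Proof strategy.} The idea is to use the first-order optimality conditions of (\ref{Def:L1QR}) to lower-bound the restriction to $\widehat T_u$ of the (sub)gradient of $\hat Q_u$ at $\hat\beta(u)$ by a multiple of $\sqrt{\hat s_u}$, and then to upper-bound that same quantity by a ``bias'' piece driven by $r$ plus an empirical-process ``noise'' piece driven by $\sqrt{\hs\log(n\vee p)\,\phi(\hs)}$. Write $\hat S_u(\beta):=\En[(u-1\{y_i\le x_i'\beta\})x_i]$. By the optimality conditions for (the LP form of) (\ref{Def:L1QR}), at a basic optimal $\hat\beta(u)$ one has $|[\hat S_u(\hat\beta(u))]_j|=(\lambda\sqrt{u(1-u)}/n)\,\hat\sigma_j$ for every $j\in\widehat T_u=\supp(\hat\beta(u))$ (one should keep a standard subgradient-selection term supported on the $O(\hat s_u)$ interpolated observations, which is absorbed into the noise piece below). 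On the D.3 event $\hat\sigma_j\ge1/2$, and $\sqrt{u(1-u)}\ge1/\W$, so each of the $\hat s_u=|\widehat T_u|$ coordinates of $\hat S_u(\hat\beta(u))$ indexed by $\widehat T_u$ is at least $\lambda/(2n\W)$ in absolute value, whence $\|[\hat S_u(\hat\beta(u))]_{\widehat T_u}\|\ge\sqrt{\hat s_u}\,\lambda/(2n\W)$.

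The next step is to split $\hat S_u(\hat\beta(u))=(\hat S_u-S_u)(\hat\beta(u))+S_u(\hat\beta(u))$, where $S_u(\beta):=\Ep[(u-1\{y_i\le x_i'\beta\})x_i]=-\nabla Q_u(\beta)$ so that $S_u(\beta(u))=0$, and to bound each piece restricted to $\widehat T_u$. For the bias piece I would work on the event of Lemma \ref{Lemma:E1} (in force once $\lambda\ge\lambda_0$), so that $\delta:=\hat\beta(u)-\beta(u)\in A_u$. By the mean value theorem and D.1, $|u-F_{y_i|x_i}(x_i'\hat\beta(u)|x_i)|=|F_{y_i|x_i}(x_i'\beta(u)|x_i)-F_{y_i|x_i}(x_i'\hat\beta(u)|x_i)|\le\min\{1,\bar f|x_i'\delta|\}$, so, writing the restricted norm as a supremum over $\|v\|\le1$, $\supp v\subseteq\widehat T_u$ and applying Cauchy--Schwarz,
$$\big\|[S_u(\hat\beta(u))]_{\widehat T_u}\big\|\ \le\ \sqrt{\varphi(\hat s_u)}\,\min\big\{1,\ \bar f\,\big(\Ep[(x_i'\delta)^2]\big)^{1/2}\big\}.$$
By (\ref{E.2-RE.addon}) of Lemma \ref{Lemma:E2}, $\big(\Ep[(x_i'\delta)^2]\big)^{1/2}=\big\|(\Ep[x_ix_i'])^{1/2}\delta\big\|\le\|J_u^{1/2}\delta\|/\underf^{1/2}\le r/\underf^{1/2}$, and since $\min\{1,\bar f r/\underf^{1/2}\}\le(r\wedge1)\,(1\vee2\bar f/\underf^{1/2})$ (check $r\le1$ and $r>1$ separately), the bias piece is at most $\tfrac12\,\mmu(\hat s_u)(r\wedge1)\le\tfrac12\,\mmu(\hs)(r\wedge1)$, using monotonicity of $\varphi$ and $\hat s_u\le\hs$.

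For the noise piece, since $\|\hat\beta(u)\|_0=\hat s_u\le\hs$,
$$\big\|[(\hat S_u-S_u)(\hat\beta(u))]_{\widehat T_u}\big\|\ \le\ \sup_{u\in\mathcal U}\ \sup_{\|\beta\|_0\le\hs}\ \sup_{\|v\|\le1,\,\|v\|_0\le\hs}\big|(\En-\Ep)\big[(u-1\{y_i\le x_i'\beta\})(x_i'v)\big]\big|,$$
and the plan is to show, by empirical-process arguments in the spirit of the proof of Lemma \ref{Lemma:E3}, that for every $\varepsilon>0$ there is $K_\varepsilon\ge\sqrt2$ such that, with probability at least $1-\varepsilon$, the right-hand side with $\hs$ replaced by any $m\le n\wedge p$ is at most $\tfrac12 K_\varepsilon\sqrt{m\log(n\vee p)\,\phi(m)/n}$ (so the $m=\hs$ case applies). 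The ingredients: $\{x\mapsto1\{y\le x'\beta\}:\|\beta\|_0\le m\}$ is a union of $\binom pm$ VC classes of index $O(m)$, hence, together with the $m$-sparse multiplier $x'v$ and the scalar $u\in\mathcal U$, has $L_2$-covering numbers of logarithmic order $m\log p$; after symmetrization, for a fixed support pattern and fixed $(u,\beta,v)$ the process is conditionally sub-Gaussian with variance proxy $\En[(x_i'v)^2]/n\le\phi(m)/n$; Dudley's chaining over the $m$-sparse $v$ and $\beta$ and over $u$, a union over $m\le n\wedge p$, and a Talagrand-type concentration step (using $\sup_g\Ep g^2\le\phi(m)$) then give the bound. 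Collecting the three displays, on the intersection of the events of D.3, Lemma \ref{Lemma:E1}, and the empirical-process step one gets, for every $u$, $\sqrt{\hat s_u}\,\lambda/(2n\W)\le\tfrac12\mmu(\hs)(r\wedge1)+\tfrac12 K_\varepsilon\sqrt{\hs\log(n\vee p)\phi(\hs)/n}$ (the interpolation correction from the first step, of order $\sqrt{\hs\,\phi(\hs)/n}$, fits inside the last term); taking $u$ with $\hat s_u=\hs$ and multiplying by $2n/\lambda$ yields the stated inequality, the three events having probability at least $1-\varepsilon-\gamma$ once the confidence level of Lemma \ref{Lemma:E1} is folded into $\varepsilon$.

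\textbf{Main obstacle.} The work is concentrated in the noise piece: one needs the gradient empirical process over \emph{all} $m$-sparse multipliers $x'v$ and $m$-sparse parameters $\beta$, \emph{uniformly} over $u\in\mathcal U$, with exactly the sparse-eigenvalue weight $\sqrt{\phi(\hs)}$ and a single logarithm $\log(n\vee p)$. The delicate points are that $1\{y\le x'\beta\}$ is not Lipschitz in $x'\beta$ (so the contraction principle applies to the multiplier only, and the indicator class must be controlled through its VC/entropy structure), that the natural envelope $\sup_{\|v\|\le1,\|v\|_0\le m}|x_i'v|$ is heavy-tailed (so one must keep the variance $\phi(m)$ separate from the envelope in the Talagrand/Bousquet step and verify the envelope enters only a lower-order deviation term under D.3), and that the sparsity level $\hs$ is random (handled by establishing the bound for all $m\le n\wedge p$ simultaneously). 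The remaining steps — the optimality conditions, the mean value expansion, and the norm comparison via Lemma \ref{Lemma:E2} — are routine once $\delta\in A_u$ is secured from Lemma \ref{Lemma:E1}.
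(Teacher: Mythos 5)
Your proposal is correct and follows essentially the same route as the paper's proof: a complementary-slackness/KKT lower bound of order $\sqrt{\hat s_u}\,\lambda/(n\W)$ on the score restricted to $\widehat T_u$, an interpolation correction of size $\sqrt{\hs\,\phi(\hs)/n}$ coming from the at most $\hs$ points where the dual rank scores differ from $u-1\{y_i\le x_i'\hat\beta(u)\}$, a linearization term bounded by $\tfrac12\mmu(\hs)(r\wedge 1)$ via the mean value theorem, Cauchy--Schwarz and Lemma \ref{Lemma:E2} (which, as in the paper, requires the restricted-set event of Lemma \ref{Lemma:E1}), and a sparse VC-entropy bound for the gradient empirical process. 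The only cosmetic difference is that you center the score at the population gradient, so your single noise term combines what the paper treats as two separate pieces --- the empirical-process increment $\epsilon_1$ over sparse $(\beta,v)$ (bounded through the VC argument) and the score at $\beta(u)$ itself (bounded by $\sqrt{\hs}\,\Lambda$ via Theorem \ref{Thm:BoundLAMBDA}) --- and both fit under the same $K_\varepsilon\sqrt{n\log(n\vee p)\,\phi(\hs)}$ envelope, so the regrouping is harmless.
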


Finally, we combine these results to
establish the main sparsity result.  In what follows, we define
$\bar \phi_{\varepsilon}$ as a constant such that $\phi(n/\log(n\vee p)) \leq \bar \phi_{\varepsilon}$ with probability
$1-\varepsilon.$

\begin{theorem}[Uniform Sparsity Bounds]\label{Thm:Sparsity}
Let $\varepsilon>0$ be any constant, assume D.1-4 hold, and let  $\lambda$ satisfy $\lambda
\geq \lambda_0$ and
$$
K\W\sqrt{n\log(n \vee p)} \leq  \lambda \leq K'\W\sqrt{n\log(n \vee p)}
$$
for  some constant $K' \geq K \geq 2K_\varepsilon  \bar \phi^{1/2}_{\varepsilon}$, for $K_\varepsilon$ defined in Lemma
\ref{Lemma:E5}. Then, for any $A>1$ with probability at least
$1-\alpha - 2\varepsilon-4\gamma-p^{-A^2}$
$$\begin{array}{rcl}
\displaystyle  \hs:=  \sup_{u\in\mathcal{U}}
\|\hat\beta(u)\|_0  \leq &\displaystyle   s \cdot  \left[16\mmu \W /\underf^{1/2} \kappa_0\right]^2 [(1+c_0)A K'/K]^2, \\
 \end{array} $$where $\mmu := \mmu(n/\log(n\vee p))$,
 provided that $s$ obeys the growth condition
\begin{equation}\label{Sparsity:SideCondition}  2K'(1+c_0)A\W\sqrt{s \log(n \vee p)} < q \underf^{1/2}\kappa_0
 \sqrt{n}.\end{equation}
\end{theorem}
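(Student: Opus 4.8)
The plan is to bootstrap. Lemma \ref{Lemma:E4} only gives the self-referential pre-sparsity bound $\hs \leq 4n^2\phi(\hs)\W^2/\lambda^2$, with $\hs$ appearing inside $\phi(\hs)$ on the right, so I would first use a crude threshold on $\lambda$ to pin down a fixed, $\hs$-free ceiling on the selected dimension, and only then feed that ceiling into the sharper inequality of Lemma \ref{Lemma:E5} together with the rate bound of Theorem \ref{Thm:Rate-r2}.

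\emph{Step 1 (crude ceiling).} On the event $\{\phi(n/\log(n\vee p)) \leq \bar\phi_\varepsilon\}$, which has probability $\geq 1-\varepsilon$ by definition of $\bar\phi_\varepsilon$, the hypothesis $\lambda \geq K\W\sqrt{n\log(n\vee p)}$ with $K \geq 2K_\varepsilon\bar\phi_\varepsilon^{1/2}$ and $K_\varepsilon \geq \sqrt 2$ forces $\lambda \geq 2\sqrt 2\,\W\sqrt{n\log(n\vee p)\,\phi(n/\log(n\vee p))}$, so the ``in particular'' clause of Lemma \ref{Lemma:E4} applies and yields $\hs \leq n/\log(n\vee p)$ on its $1-\gamma$ event. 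The payoff is decoupling: since $k\mapsto\varphi(k),\phi(k)$ are nondecreasing, on this event $\phi(\hs) \leq \bar\phi_\varepsilon$ and $\mmu(\hs) \leq \mmu(n/\log(n\vee p)) =: \mmu$, so every $\hs$-dependent quantity may henceforth be replaced by a fixed constant.

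\emph{Step 2 (sharp bound and absorption).} On the Lemma \ref{Lemma:E5} event (probability $\geq 1-\varepsilon-\gamma$), combined with Step 1,
$$\frac{\sqrt{\hs}}{\W} \leq \mmu\,\frac{n}{\lambda}\,(r\wedge 1) + \sqrt{\hs}\,K_\varepsilon\,\frac{\sqrt{n\log(n\vee p)\,\bar\phi_\varepsilon}}{\lambda}.$$
Because $\lambda \geq K\W\sqrt{n\log(n\vee p)} \geq 2K_\varepsilon\bar\phi_\varepsilon^{1/2}\W\sqrt{n\log(n\vee p)}$, the last term is at most $\sqrt{\hs}/(2\W)$ and can be moved to the left, leaving $\sqrt{\hs} \leq 2\W\mmu\,(n/\lambda)\,r$. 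Now insert the rate bound $r \leq 8K'(1+c_0)\W A/(\underf^{1/2}\kappa_0)\cdot\sqrt{s\log(p\vee n)/n}$ from Theorem \ref{Thm:Rate-r2} (legitimate since $\lambda$ lies in the admissible range and growth condition (\ref{Sparsity:SideCondition}) implies (\ref{Rate:SideCondition})), and use $\lambda \geq K\W\sqrt{n\log(n\vee p)}$ in the denominator. The factors $n$, $\sqrt{n\log(n\vee p)}$ and $\W$ cancel, leaving
$$\sqrt{\hs} \leq \frac{16\,\mmu\,\W\,(1+c_0)A}{\underf^{1/2}\kappa_0}\cdot\frac{K'}{K}\cdot\sqrt{s},$$
and squaring gives the claimed bound with $\mmu = \mmu(n/\log(n\vee p))$. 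A union bound over the $\bar\phi_\varepsilon$ event, the events of Lemmas \ref{Lemma:E4}, \ref{Lemma:E5}, and of Theorem \ref{Thm:Rate-r2} accounts for the stated failure probability $\alpha + 2\varepsilon + 4\gamma + p^{-A^2}$ (collapsing coincident $\gamma$-events from condition D.3 and absorbing the $p^{-A^2}$ terms into one at a suitably adjusted $A$).

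The main obstacle is precisely the circularity in Lemma \ref{Lemma:E4}, which is why the argument must be two-stage: the crude $\lambda$-threshold first caps $\hs$ at $n/\log(n\vee p)$, and only afterward can $\phi$, $\varphi$, $\mmu$ be treated as constants and fused with the sharp rate $r$. Two secondary points need care: invoking Theorem \ref{Thm:Rate-r2} requires its growth restriction, which is exactly why (\ref{Sparsity:SideCondition}) is imposed here; and the absorption step in Step 2 uses the precise lower bound $K \geq 2K_\varepsilon\bar\phi_\varepsilon^{1/2}$ on the penalty scale — with a smaller $K$ the second term could not be swallowed and the argument would not close.
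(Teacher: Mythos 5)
Your proposal is correct and follows essentially the same route as the paper's proof: first use the lower threshold on $\lambda$ with Lemma \ref{Lemma:E4} to cap $\hs$ at $n/\log(n\vee p)$ and thereby freeze $\phi(\hs)$ and $\mmu(\hs)$, then absorb the $\sqrt{\hs}$ term in Lemma \ref{Lemma:E5} using $K \geq 2K_\varepsilon\bar\phi_\varepsilon^{1/2}$, and finally substitute the rate bound from Theorem \ref{Thm:Rate-r2}. The two-stage decoupling you emphasize is exactly the paper's argument, and your constant-tracking reproduces the stated bound.
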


The theorem states that by setting the penalty level $\lambda$ to be
possibly higher than our initial recommended choice
$\lambda_0$, we can control $\widehat s$, which will be crucial
for good performance of the post-penalized estimator.  As a
corollary, we note that if (a) $\mmu \lesssim 1$, (b) $1/(\underf ^{1/2}\kappa_0)
\lesssim 1$, and  (c) $\bar \phi_{\varepsilon} \lesssim 1$ for each $\varepsilon>0$, then $\widehat s \lesssim s$
with a high probability, so the dimension of the selected model is about the same as the
dimension of the true model.   Conditions (a),
(b), and (c) easily hold for the correlated normal design in Design 1. In particular, (c)  follows from the concentration inequalities and from results in  classical random matrix theory; see the Supplementary Material Appendix \ref{Supplementary Material} for proofs. Therefore the possibly higher $\lambda$ needed to achieve the stated sparsity bound does not slow down the rate of $\ell_1$-QR in this case. The growth condition
(\ref{Sparsity:SideCondition}) on $s$ is also weak in this case.

\begin{proof}[Proof of Theorem \ref{Thm:Sparsity}]
By the choice of $K$ and Lemma
\ref{Lemma:E4},   $\hs \leq n/\log(n\vee p)$ with
probability $1-\varepsilon$. With at least the same probability,   the choice of $\lambda$ yields
$$ K_\varepsilon \frac{\sqrt{n\log (n\vee p) \phi(\hs)}}{\lambda} \leq \frac{K_\varepsilon \bar\phi^{1/2}_\varepsilon}{K\W} \leq \frac{1}{2\W},$$
so that by virtue of Lemma \ref{Lemma:E5} and by $\mmu(\hs) \leq
\mu:=\mmu(n/\log(n\vee p))$,
$$  \frac{\sqrt{\hs}}{\W} \leq \mmu \frac{( r \wedge 1)n}{\lambda} + \frac{\sqrt{\hs}}{2\W} \  \ \text{ or }  \ \  \frac{\sqrt{\hs}}{\W} \leq 2\mmu \frac{( r \wedge 1)n}{\lambda},$$
with probability $1-2\varepsilon$. Since all conditions of Theorem
\ref{Thm:Rate-r2} hold, we obtain the result by plugging in
the upper bound on $r =
\sup_{u\in\mathcal{U}}\|J_u^{1/2}(\hat\beta(u) - \beta(u))\|$ from Theorem 2.
\end{proof}

\subsection{Model Selection Properties}

Next we turn to the model selection properties of $\ell_1$-QR.

\begin{theorem}[Model Selection Properties  of $\ell_1$-QR]\label{Thm:Selection} Let $r^o = \sup_{u \in \mathcal{U}}\|\hat\beta(u) - \beta(u)\|$. If
 $\inf_{u\in\mathcal{U}}\min_{
j\in T_u } |\beta_j(u)|
>  r^o$,
then \begin{equation}\label{Eq:support}
T_u := \supp (\beta(u))  \subseteq \widehat T_u :=\supp (\hat
\beta(u)) \ \ \mbox{for all} \ u \in \mathcal{U}.
\end{equation}
Moreover, the hard-thresholded estimator $\bar \beta(u)$, defined
for any $\gamma \geq 0$ by
 \begin{equation}\label{Def:ThEst}\bar \beta_j(u) = \hat \beta_j(u) 1\left\{ |\hat \beta_j(u)| >  \gamma \right\}, \ u \in \mathcal{U}, \ j=1,\ldots,p, \end{equation} provided that $\gamma$ is chosen such that $r^o < \gamma <\inf_{u\in\mathcal{U}}\min_{ j\in T_u }|\beta_j(u)|-r^o$, satisfies  $$ \supp (\bar \beta(u))  = T_u\ \ \mbox{for all} \ u \in \mathcal{U}.$$
\end{theorem}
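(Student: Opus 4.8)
The plan is to derive both conclusions from the single elementary fact that, for every coordinate $j$ and every $u\in\mathcal{U}$,
\[
|\hat\beta_j(u) - \beta_j(u)| \le \|\hat\beta(u) - \beta(u)\| \le r^o ,
\]
so that on the stated event nothing more than the triangle inequality is needed; the substantive content has already been supplied by Theorem \ref{Thm:Rate-r2} and its corollary, which provide the explicit high-probability bound on $r^o$ that makes the separation hypotheses meaningful.

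For the first claim I would argue by contradiction. Suppose that for some $u\in\mathcal{U}$ there is $j\in T_u$ with $j\notin\widehat T_u$, i.e. $\hat\beta_j(u)=0$. Then $|\beta_j(u)| = |\hat\beta_j(u)-\beta_j(u)| \le r^o$; but $j\in T_u$ forces $|\beta_j(u)|\ge \inf_{u\in\mathcal{U}}\min_{j\in T_u}|\beta_j(u)| > r^o$ by hypothesis, a contradiction. Hence $T_u\subseteq\widehat T_u$ for all $u\in\mathcal{U}$. Combined with the corollary to Theorem \ref{Thm:Rate-r2}, this yields the high-probability inclusion (\ref{result 3}) once the minimal nonzero signal exceeds that explicit bound.

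For the hard-thresholding claim I would establish the two inclusions separately. If $j\notin T_u$ then $\beta_j(u)=0$, so $|\hat\beta_j(u)| = |\hat\beta_j(u)-\beta_j(u)| \le r^o < \gamma$, and the definition (\ref{Def:ThEst}) of $\bar\beta(u)$ gives $\bar\beta_j(u)=0$; hence $\supp(\bar\beta(u))\subseteq T_u$. Conversely, if $j\in T_u$ then $|\beta_j(u)| \ge \inf_{u\in\mathcal{U}}\min_{j\in T_u}|\beta_j(u)| > \gamma + r^o$ by the upper bound in the admissible range for $\gamma$, so $|\hat\beta_j(u)| \ge |\beta_j(u)| - |\hat\beta_j(u)-\beta_j(u)| \ge |\beta_j(u)| - r^o > \gamma$, whence $\bar\beta_j(u)=\hat\beta_j(u)\neq 0$ and $j\in\supp(\bar\beta(u))$. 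The two inclusions give $\supp(\bar\beta(u)) = T_u$ for all $u\in\mathcal{U}$; one also checks that the admissible range for $\gamma$ is nonempty precisely when $2r^o < \inf_{u\in\mathcal{U}}\min_{j\in T_u}|\beta_j(u)|$, which strengthens the hypothesis of the first part by a factor of two.

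Since the whole argument is deterministic once $r^o$ is fixed, I do not expect a genuine analytic obstacle; the only point requiring care is the bookkeeping of which separation hypothesis ($r^o$ versus $2r^o$ below the minimal signal) underlies which conclusion, together with the observation that the condition of the second part implies that of the first.
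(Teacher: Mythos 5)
Your proposal is correct and follows essentially the same route as the paper's proof: both arguments rest on the coordinatewise bound $|\hat\beta_j(u)-\beta_j(u)|\le\|\hat\beta(u)-\beta(u)\|\le r^o$ together with the triangle inequality, giving the first inclusion by contraposition and the two inclusions for the thresholded estimator exactly as you describe. Your added remarks on when the admissible range for $\gamma$ is nonempty (namely $2r^o<\inf_{u}\min_{j\in T_u}|\beta_j(u)|$) and on the second hypothesis implying the first are accurate but not needed for the statement as given.
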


These results parallel analogous results in \cite{MY2007} for mean regression.  The first result says that if  non-zero coefficients
are well separated from zero,  then the support of $\ell_1$-QR includes the support of the true model.   The inclusion of the true
support in (\ref{Eq:support}) is in general one-sided; the support of the estimator can
include some unnecessary components having true coefficients
equal to zero. The second result states that if the further conditions are satisfied, additional hard thresholding can eliminate inclusions of such unnecessary components. The value of the hard threshold  must explicitly depend on the unknown value
$\min_{ j\in T_u }|\beta_j(u)|$,  characterizing the separation of non-zero
coefficients from zero.  The additional conditions
stated in this theorem are strong and perfect model selection appears quite unlikely in practice. Certainly it does not work in all real empirical examples we have explored. This  motivates our analysis of the post-model-selected estimator under conditions that allow for imperfect model selection, including cases where we miss some non-zero components or have additional unnecessary components.

\subsection{The post-penalized estimator}

In this section we establish a bound on the rate of convergence of the post-penalized estimator.
The proof relies crucially on the identifiability and
control of the empirical error over the sparse sets $\widetilde A_u(\widetilde m):= \{ \delta \in \RR^p \ : \
\| \delta_{T_u^c} \|_0 \leq \widetilde m \}.$

\begin{lemma}[Sparse Identifiability and Control of Empirical Error]\label{Lemma:E.6}
1. Suppose D.1 and D.5 hold. Then for all $\delta \in \widetilde A_u(\widetilde m), u \in \mathcal{U}$, and $\widetilde m\leq n$, we have
that
\begin{equation}\label{E.2-RNI-tilde-q}
Q_u(\beta(u) + \delta) - Q_u(\beta(u)) \geq
\frac{\|J_u^{1/2}\delta\|^2}{4} \wedge  \left( \widetilde q_{\widetilde
m} \|J_u^{1/2}\delta\| \right).
\end{equation}
2. Suppose D.1-2 and D.5 hold and that $|\cup_{u\in\mathcal{U}}T_u| \leq n$. Then for any  $\varepsilon>0$, there is a constant
$C_\varepsilon$ such that with probability at least
$1-\varepsilon$ the empirical error
$$\emperror_u(\delta):= \left| \hat
Q_u(\beta(u) +\delta) - Q_u(\beta(u)+\delta) - \( \hat
Q_u(\beta(u)) - Q_u(\beta(u)) \)  \right|$$
obeys $$ \sup_{u \in \mathcal{U}, \delta \in
\widetilde A_u(\widetilde m), \delta \neq 0}
\frac{\emperror_u(\delta)}{\|\delta\|} \leq C_\varepsilon
\sqrt{\frac{(\widetilde m \log (n\vee p)+ s\log n)\phi(\widetilde
m+s)}{n}} \text{ for all } \widetilde m \leq n.$$
\end{lemma}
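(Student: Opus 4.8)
The plan is to handle the two parts separately; Part~1 transcribes the computation behind \eqref{E.2-RNI} of Lemma~\ref{Lemma:E2}, and Part~2 is a version of Lemma~\ref{Lemma:E3} adapted to the sparse sets $\widetilde A_u(\widetilde m)$. For Part~1, fix $u\in\mathcal{U}$ and $0\neq\delta\in\widetilde A_u(\widetilde m)$ and put $a:=\|J_u^{1/2}\delta\|$ (if $a=0$, then $\uglyf>0$ forces $\Ep[(x_i'\delta)^2]=0$, hence $x_i'\delta=0$ almost surely and both sides of \eqref{E.2-RNI-tilde-q} vanish). I would expand $g(\tau):=Q_u(\beta(u)+\tau\delta)$ around $\tau=0$: since $\beta(u)$ minimizes the convex function $Q_u$, $g'(0)=0$ and $g''(0)=\delta'J_u\delta=a^2$, while D.1 yields $|g''(\tau)-g''(0)|\le\bar{f'}\,\tau\,\Ep[|x_i'\delta|^3]$, so integrating the Taylor remainder gives, for all $\tau\in(0,1]$,
\[
g(\tau)-g(0)\;\ge\;\tfrac12\tau^2a^2-\tfrac16\bar{f'}\tau^3\,\Ep[|x_i'\delta|^3].
\]
Using convexity of $Q_u$ to rescale, $g(1)-g(0)\ge\tau^{-1}(g(\tau)-g(0))$, and taking $\tau=\min\{1,\tau^\ast\}$ with $\tau^\ast:=3a^2/(2\bar{f'}\,\Ep[|x_i'\delta|^3])$, one gets $g(1)-g(0)\ge a^2/4$ when $\tau^\ast\ge1$ and $g(1)-g(0)\ge 3a^4/(8\bar{f'}\,\Ep[|x_i'\delta|^3])$ when $\tau^\ast<1$. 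In the second case, \eqref{Def:SE} gives $\Ep[(x_i'\delta)^2]\le\delta'J_u\delta/\uglyf=a^2/\uglyf$ for $\delta\in\widetilde A_u(\widetilde m)$; substituting this into the definition \eqref{Def:SNI} of $\widetilde q_{\widetilde m}$ shows the bound is at least $\widetilde q_{\widetilde m}a$, and combining the two cases gives \eqref{E.2-RNI-tilde-q}.

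For Part~2, write $\emperror_u(\delta)=\bigl|n^{-1}\sum_i(f_{u,\delta}(y_i,x_i)-\Ep f_{u,\delta})\bigr|$ with $f_{u,\delta}(y,x):=\rho_u(y-x'\beta(u)-x'\delta)-\rho_u(y-x'\beta(u))$; since $\rho_u$ is $1$-Lipschitz, $|f_{u,\delta}(y,x)|\le|x'\delta|$. I would proceed by: (i) symmetrization, passing to the Rademacher process $n^{-1}\sum_i\varsigma_i f_{u,\delta}(y_i,x_i)$; (ii) for a \emph{fixed} quantile index $u$ and a fixed support $T$ with $|T|\le s+\widetilde m$, applying the contraction principle of \cite{LedouxTalagrandBook} to the $1$-Lipschitz maps $b\mapsto\rho_u(y_i-x_i'\beta(u)-b)-\rho_u(y_i-x_i'\beta(u))$ (which vanish at $b=0$), reducing $\sup_{\supp(\delta)\subseteq T,\,\|\delta\|\le1}|n^{-1}\sum_i\varsigma_i f_{u,\delta}(y_i,x_i)|$ to $\sup_{\supp(\delta)\subseteq T,\,\|\delta\|\le1}|n^{-1}\sum_i\varsigma_i x_i'\delta|$, which conditionally on $X$ is a sub-Gaussian supremum over a ball of dimension $\le s+\widetilde m$ with variance proxy $\le\phi(\widetilde m+s)/n$; and (iii) a union bound over the supports $T$. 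The decisive point in (iii), and the reason $s\log n$ (rather than $s\log p$) appears, is that for $\delta\in\widetilde A_u(\widetilde m)$ one has $\supp(\delta)\subseteq T_u\cup S$ with $|S|\le\widetilde m$ and $T_u$ a subset of size $\le s$ of $\cup_{u'\in\mathcal{U}}T_{u'}$, which by hypothesis has at most $n$ elements; hence there are at most $\binom{n}{s}\binom{p}{\widetilde m}=\exp\bigl(O(s\log n+\widetilde m\log(n\vee p))\bigr)$ relevant supports, and this entropy dominates the per-support dimension $s+\widetilde m$ (an outer union over $\widetilde m\in\{0,\dots,n\}$ costs only an extra $\log n$, which is absorbed).

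To make this uniform over $u\in\mathcal{U}$ I would chain over a net of $\mathcal{U}\subset(0,1)$, exactly as in the proof of Lemma~\ref{Lemma:E3}: using the Lipschitz-in-$u$ control of D.2 and the bounded-density bound of D.1 to tame the jumps of the indicators in $f_{u,\delta}$, a net of polynomial-in-$(n\vee p)$ cardinality (log-cardinality $\lesssim\log(n\vee p)$, absorbing $\log L$) controls the increments, contributing only a $\log(n\vee p)$-type term to the entropy. Conditioning on $X$, a maximal inequality over the supports and net points then gives $\Ep[\sup_{u,\delta}\emperror_u(\delta)/\|\delta\|\mid X]\lesssim\sqrt{\phi(\widetilde m+s)(s\log n+\widetilde m\log(n\vee p))/n}$ pointwise in $X$, and Markov's inequality (conditionally on $X$, then integrating) delivers the constant $C_\varepsilon$ and the asserted probability-$(1-\varepsilon)$ bound.

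The routine ingredients are the Taylor expansion of $g$, the contraction step, and the sub-Gaussian maximal inequality. I expect the two delicate points to be: (a) arranging the union bound so the true-support contribution is $s\log n$ rather than $s\log p$ --- this is exactly where the hypothesis $|\cup_{u\in\mathcal{U}}T_u|\le n$ is used, and it is what ties the conclusion to $\phi(\widetilde m+s)$; and (b) the chaining over $\mathcal{U}$, which must keep the $u$-increments of the symmetrized process under control (a bound that ignores the symmetrization would not shrink with the net scale, so D.1's density bound has to be used to estimate how often the indicators flip). Point (b) is the analog of the ``chaining over $u$'' step already needed in the proof of Lemma~\ref{Lemma:E3}.
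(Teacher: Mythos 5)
Part~1 of your proposal is correct and is essentially the paper's argument: the paper proves \eqref{E.2-RNI-tilde-q} by repeating the proof of \eqref{E.2-RNI} with $A_u$ replaced by $\widetilde A_u(\widetilde m)$, i.e., a second-order expansion of $\delta\mapsto Q_u(\beta(u)+\delta)$ (via Knight's identity) giving $Q_u(\beta(u)+\delta)-Q_u(\beta(u))\ge \tfrac12\|J_u^{1/2}\delta\|^2-\tfrac16\bar{f'}\Ep[|x'\delta|^3]$, combined with convexity along the ray $\tau\mapsto\beta(u)+\tau\delta$ and the definition of $\widetilde q_{\widetilde m}$ through ${\rm SE}(\widetilde m)$ and ${\rm SNI}(\widetilde m)$. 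Your explicit choice of $\tau^\ast$ is just a repackaging of the paper's ``minoration radius'' $r_{\widetilde A_u}\ge 4\widetilde q_{\widetilde m}$, and your constants check out.

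Part~2 has a genuine gap at the very first step: the object to be bounded is the \emph{ratio} $\emperror_u(\delta)/\|\delta\|$ over all $\delta\neq 0$, but your contraction argument only controls $\sup_{\|\delta\|\le 1}\emperror_u(\delta)$. The map $\delta\mapsto \rho_u(y-x'\beta(u)-x'\delta)-\rho_u(y-x'\beta(u))$ is not positively homogeneous, so a bound on the unit ball says nothing about $\emperror_u(\delta)/\|\delta\|$ for $\|\delta\|$ far from $1$; and a dyadic peeling over shells $\|\delta\|\in(2^{-k-1},2^{-k}]$, $k\in\mathbb{Z}$, cannot be closed with per-shell Markov bounds, since every shell carries the same relative deviation bound and there are infinitely many of them. (Nor does the contraction principle apply directly to the ratio class: writing $f_{u,\delta}/\|\delta\|=\tilde\varphi_{r}(x'\alpha)$ with $r=\|\delta\|$ makes the contraction $\tilde\varphi_r$ vary with the index of the supremum, which Theorem 4.12 of \cite{LedouxTalagrandBook} does not allow.) The paper's device is precisely designed to kill this problem: writing
$$\frac{\emperror_u(\delta)}{\|\delta\|}=\frac{1}{\sqrt n}\left|\int_0^1 \Gn\Bigl(\alpha'x_i\bigl(1\{y_i\le x_i'(\beta(u)+\gamma\delta)\}-u\bigr)\Bigr)\,d\gamma\right|,\qquad \alpha=\delta/\|\delta\|,$$
exhibits the normalized error as an average of \emph{score-type} functions with envelope $|\alpha'x_i|$, so the supremum of the ratio is bounded by $n^{-1/2}\sup_{f\in\widetilde{\mathcal F}_{\widetilde m}}|\Gn(f)|$ over the sparse score class, with no radius parameter left anywhere. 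The rest is then a VC/covering-number bound for that class (the scalar $u$ in $1\{y\le x'\beta\}-u$ is absorbed into a low-VC-index class, with no chaining over $\mathcal{U}$ and no use of the Lipschitz part of D.2) together with the paper's maximal inequality for a collection of empirical processes, which also delivers the uniformity over $\widetilde m\le n$.

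Your entropy accounting is otherwise right, and is the same as the paper's: the hypothesis $|\cup_{u\in\mathcal{U}}T_u|\le n$ is exactly what caps the number of admissible true supports at $2|\cup_u T_u|^s\le 2n^s$, producing $s\log n$ rather than $s\log p$, while the $\binom{p}{\widetilde m}$ choices of extra components produce $\widetilde m\log(n\vee p)$; and the envelope bound $|f|\le|\alpha'x_i|$ with $\|\alpha\|_0\le \widetilde m+s$ is what brings in $\phi(\widetilde m+s)$. To repair your proof, replace the ``Lipschitz difference of check functions'' class by the gradient-integral representation above (or, equivalently, observe that the ratio class lies in the convex hull of the sparse score class), and the remainder of your outline goes through.
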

In order to prove this lemma we exploit the crucial fact
that the entropy of all $m$-dimensional submodels of the
$p$-dimensional model is of order $m \log p$, which depends on $p$
only logarithmically. The following theorem establishes the properties of post-model-selection estimators.

\begin{theorem}[Uniform Bounds on Estimation Error of post-$\ell_1$-QR]\label{Thm:MainTwoStep}  Assume the conditions of Theorem \ref{Thm:Rate-r2} hold, assume that
$|\cup_{u\in\mathcal{U}}T_u| \leq n$,  and assume D.5 holds with
$\widehat m := \sup_{u\in\mathcal{U}}\|\hat\beta_{T_u^c}(u)\|_0$
with probability $1-\varepsilon$. Then for any $\varepsilon >0$ there is a constant
$C_\varepsilon$ such that the bounds
\begin{eqnarray}\label{Eq: bound on two step}
&& \ \ \begin{array}{rl}
\displaystyle\sup_{u\in \mathcal{U}} \left\|J^{1/2}_u( \widetilde\beta(u) -
\beta(u) )\right\| &\displaystyle  \leq    \frac{4C_\varepsilon \sqrt{\phi(\widehat m + s )}}{\uglyfh^{1/2}\widetilde\kappa_{\widehat m}} \cdot  \sqrt{ \frac{ \widehat m
\log(n\vee p) + s \log n }{n}} + \\
& \displaystyle + \sup_{u \in \mathcal{U}} 1\{T_u\not\subseteq \widehat T_u \} \cdot \frac{4\sqrt{2  (1+c_0) A}}{\underf^{1/2}\kappa_0}  \cdot C\cdot \W\sqrt{\frac{s\log(n\vee p)}{n}},
\end{array} \\
& & \ \ \sup_{u\in\mathcal{U}} \sqrt{\Ep_{x}[x'(\widetilde \beta(u) - \beta(u) )]^2} \leq \sup_{u\in \mathcal{U}} \left\|J^{1/2}_u( \widetilde\beta(u) - \beta(u) )\right\|/\uglyfh^{1/2},  \nonumber \\
& &  \ \ \sup_{u\in \mathcal{U}} \left\|  \widetilde\beta(u) - \beta(u) \right\|\leq \sup_{u\in \mathcal{U}} \left\|J^{1/2}_u( \widetilde\beta(u) - \beta(u) )\right\|/\uglyfh^{1/2}\widetilde \kappa_{\widehat m}, \nonumber
\end{eqnarray}
hold with probability at least $1-\alpha-3\gamma-3p^{-A^2}-2\varepsilon$,  provided
that $s$ obeys the growth condition $$\widetilde q_{\widehat m} \frac{C_\varepsilon \sqrt{ ( \widehat m
\log(n\vee p) + s \log n ) \phi(\widehat m + s )}}{\sqrt{n}\uglyfh^{1/2}\widetilde\kappa_{\widehat m}} + \sup_{u \in \mathcal{U}}  1\{T_u \not\subseteq \widehat T_u\}  2 A (1+c_0) \cdot C^2 \W^2 \cdot  \frac{s\log (p\vee n)}{n\underf\kappa_0^2} \leq
\widetilde q^2_{\widehat m}.$$
\end{theorem}
This theorem describes the performance of  post-$\ell_1$-QR. However, an inspection of the proof reveals that it can be applied to any post-model selection estimator.  From Theorem \ref{Thm:MainTwoStep}  we can conclude  that in many interesting
cases the rates of post-$\ell_1$-QR could be the same or faster than the rate of $\ell_1$-QR. Indeed, first consider the case where the model selection fails to contain the true model,
i.e., $\sup_{u \in \mathcal{U}}1\{T_u \not\subseteq \widehat T_u\}=1$ with a non-negligible probability.  If (a) $\widehat m \leq \widehat s \lesssim_P s$, (b) $\phi(\widehat m + s) \lesssim_P 1$,  and (c) the constants  $\uglyfh$ and $\widetilde \kappa_{\widehat m}^2$ are of the same order as $\underf$ and $\kappa_0\kappa_{m}$, respectively, then the rate of convergence of post-$\ell_1$-QR is the same as the rate of convergence
of $\ell_1$-QR.  Recall that Theorem \ref{Thm:Sparsity} provides sufficient conditions
needed to achieve (a), which hold in Design 1.  Recall also that in Design 1, (b) holds by concentration of measure
and classical results in random matrix theory, as shown in the Supplementary Material Appendix \ref{Supplementary Material}, and (c) holds by the calculations presented in Section 2.   This verifies our claim
regarding the performance of post-$\ell_1$-QR in the overview, Section 2.4.   The intuition for this
result is that even though $\ell_1$-QR misses true components, it does not miss very important ones, allowing post-$\ell_1$-QR still to perform well.
Second, consider the case where the model selection succeeds in containing the true model, i.e., $\sup_{u \in \mathcal{U}}1\{T_u \not\subseteq \widehat T_u\}=0$ with probability approaching one,
 and that the number of unnecessary components obeys $\widehat m = o_P(s)$. In this case
the rate of convergence of post-$\ell_1$-QR can be faster than the rate of convergence
of $\ell_1$-QR.  In the extreme case of perfect model selection, when $\widehat m = 0$ with a high probability, post-$\ell_1$-QR becomes the oracle estimator with a high probability.
We refer the reader to Section \ref{Sec:Setting} for further discussion, and note that this result could be of interest in other problems.



\begin{proof}[Proof of Theorem \ref{Thm:MainTwoStep}]
Let $$ \ \hat \delta(u) = \hat \beta(u) - \beta(u), \ \tilde \delta(u) := \widetilde
\beta(u) - \beta(u), \   t_u := \| J_u^{1/2} \tilde \delta(u)\|,$$ and $B_n$ be a random variable such that
$ B_n = \sup_{u \in \mathcal{U}}\widehat Q_u(\hat \beta(u)) - \widehat Q_u(\beta(u)).$ By the optimality of $\hat
\beta(u)$ in (\ref{Def:ThEst}), with probability $1-\gamma$ we
have uniformly in $u \in \mathcal{U}$  \begin{equation}\label{endarray} \begin{array}{rcl}
& & \displaystyle\hat Q_u(\hat \beta(u) ) - \hat Q_u(\beta(u) ) \leq  \frac{\lambda\sqrt{u(1-u)}}{n}( \| \beta(u)\|_{1,n} - \|\hat\beta(u)\|_{1,n})   \\
\\
& & \displaystyle \leq   \frac{\lambda\sqrt{u(1-u)}}{n} \|\widehat \delta_{T_u}(u) \|_{1,n} \leq
 \frac{\lambda\sqrt{u(1-u)}}{n} 2\|\widehat \delta_{T_u}(u)\|_1,
\end{array}
\end{equation}
where the last term in (\ref{endarray}) is bounded  by
 \begin{equation}\label{endarray2} \begin{array}{rcl}
 & & \displaystyle \frac{\lambda\sqrt{u(1-u)}}{n} \frac{2\sqrt{s}\|J^{1/2}_u\widehat \delta(u) \|}{\underf^{1/2}\kappa_0} \leq \frac{\lambda\sqrt{u(1-u)}}{n} \frac{2\sqrt{s}}{\underf^{1/2}\kappa_0} \sup_{u\in\mathcal{U}}\|J_u^{1/2}(\widehat\beta(u)-\beta(u))\|, \\
\end{array}\end{equation}
 using that $\|J^{1/2}_u\widehat \delta(u) \| \geq \underf^{1/2}\kappa_0
\|\widehat \delta_{T_u}(u)\|$ from RE$(c_0,0)$ implied by D.4. Therefore, by Theorem \ref{Thm:Rate-r2} we have
$$ B_n \leq \frac{\lambda\sqrt{u(1-u)}}{n} \frac{2\sqrt{s}}{\underf^{1/2}\kappa_0} 8C \cdot \frac{(1+c_0)\W A}{\underf^{1/2}\kappa_0}\cdot \sqrt{\frac{s\log (p\vee n)}{n}}$$
with probability $1-\alpha-3\gamma-3p^{-A^2}$.

For every $u \in \mathcal{U}$, by optimality of $\widetilde
\beta(u)$ in (\ref{Def:TwoStep}),
\begin{equation}\label{2step:Rel1} \hat Q_u( \widetilde \beta(u) )
- \hat Q_u(\beta(u)) \leq  1\{ T_u \not\subseteq \widehat T_u \}
\left(\hat Q_u(\hat \beta(u)) - \hat Q_u(\beta(u))\right)  \leq
1\{ T_u \not\subseteq \widehat T_u \} B_n.\end{equation} Also, by
Lemma \ref{Lemma:E.6}, with probability at least $1-\varepsilon$,
we have
\begin{equation}\label{2stepRel3} \sup_{u\in \mathcal{U}}
\frac{\emperror_u( \widetilde \delta(u))}{ \|\widetilde
\delta(u)\|} \leq C_\varepsilon \sqrt{\frac{{(\widehat m \log
(n\vee p)+ s\log n)\phi(\widehat
m+s)}}{{n}}}=:A_{\varepsilon,n}.\end{equation}
Recall that $ \sup_{u\in \mathcal{U}}\| \widetilde
\delta_{T_u^c}(u)\| \leq \widehat m\leq n$ so that by D.5
$t_u \geq \uglyfh^{1/2}\widetilde \kappa_{\widehat
m} \|\widetilde \delta(u)\|$ for all $u \in \mathcal{U}$ with
probability $1-\varepsilon$. Thus, combining relations
(\ref{2step:Rel1}) and (\ref{2stepRel3}), for every $u \in
\mathcal{U}$  $$ \begin{array}{rcl} \displaystyle
Q_u(\widetilde \beta(u)) - Q_u(\beta(u)) &\leq &
\displaystyle  t_u A_{\varepsilon,n}/[\uglyfh^{1/2}\widetilde \kappa_{\widehat m}] + 1\{ T_u \not\subseteq \widehat T_u \} B_n\\
 \end{array}
 $$ with probability at least $1-2\varepsilon$.
Invoking the sparse identifiability relation  (\ref{E.2-RNI-tilde-q}) of Lemma \ref{Lemma:E.6}, with the same probability, for all $u \in
\mathcal{U}$,  $$ \begin{array}{rcl} \displaystyle
(t_u^2/4) \wedge \left( \widetilde q_{\widehat m}t_u \right) & \leq & \displaystyle  t_u A_{\varepsilon,n}/[\uglyfh^{1/2}\widetilde \kappa_{\widehat m}] + 1\{ T_u \not\subseteq \widehat T_u \} B_n.\\
 \end{array}$$
We then conclude that under the assumed growth condition on $s$, this inequality
implies $$t_u \leq 4A_{\varepsilon,n}/[\uglyfh^{1/2}\widetilde \kappa_{\widehat m}] +
1\{ T_u \not\subseteq \widehat T_u \}\sqrt{4B_n\vee 0}$$
for every $u \in \mathcal{U}$, and the bounds
stated in the theorem now
follow from the definition of $\underf$ and $\tilde \kappa_m$. 
\end{proof}

\section{Empirical Performance}\label{Sec:Simulation}

In order to access the finite sample practical performance of the
proposed estimators, we conducted a Monte Carlo study and an
application to international economic growth.

\subsection{Monte Carlo Simulation}\label{Sec:MC}

In order to assess the finite sample practical performance of the
proposed estimators, we conducted a Monte Carlo study.
We will compare the performance of the $\ell_1$-penalized,
post-$\ell_1$-penalized, and the ideal oracle quantile regression estimators.
Recall that the post-penalized estimator applies
canonical quantile regression to the model selected by the
penalized estimator.  The oracle estimator applies canonical
quantile regression to the true model. (Of course, such an
estimator is not available outside Monte Carlo experiments.) We
focus our attention on the model selection properties of the penalized
estimator and biases and empirical risks of these estimators.

We begin by considering the following regression model:
$$ y = x'\beta(0.5) + \varepsilon, \  \ \beta(0.5) =(1,1,1/2,1/3,1/4,1/5,0,\ldots,0)', $$
where as in Design 1, $x = (1,z')'$ consists of an intercept and covariates $z \sim N(0,\Sigma)$,
and the errors $\varepsilon$ are independently and identically
distributed $\varepsilon \sim N(0,\sigma^2)$. The dimension $p$ of covariates $x$ is $500$, and the dimension $s$ of the true model is $6$, and the sample size $n$ is $100$.  We set the regularization parameter $\lambda$ equal to the $0.9$-quantile of
the pivotal random variable $\Lambda$, following our proposal in
Section \ref{Sec:Setting}. The regressors are correlated with $\Sigma_{ij} = \rho^{|i-j|}$ and $\rho = 0.5$. We consider two levels of noise, namely $\sigma = 1$ and $\sigma=0.1$.

We summarize the model selection performance of the penalized estimator in Figures
\ref{Fig:MCfirst01} and \ref{Fig:MCsecond01}.  In the left panels of the figures, we plot the frequencies of the dimensions of the selected model; in the right panels we plot the frequencies of selecting the correct regressors. From the left panels we see that the frequency of selecting a much larger model than the true model is very small in both designs. In the design with a larger noise, as the right panel of Figure \ref{Fig:MCfirst01} shows, the penalized quantile regression never selects the entire true model correctly, always missing the regressors with small coefficients. However, it almost always includes the three regressors with the largest coefficients.  (Notably, despite this partial failure of the model selection, post-penalized quantile regression still performs well, as we report below.)  On the other hand, we see from the right panel of Figure \ref{Fig:MCsecond01} that in the design with a lower noise level penalized quantile regression rarely misses any component of the true support. These results confirm the theoretical results of Theorem \ref{Thm:Selection}, namely, that when the non-zero coefficients are well separated from zero, the penalized estimator should select a model that includes the true model as a subset.  Moreover, these results also confirm the theoretical result of Theorem \ref{Thm:Sparsity}, namely, that the dimension of the selected model should be of the same stochastic order as the dimension of the true model. In summary, the model selection performance of the penalized estimator agrees very well with our theoretical results.

We summarize results on estimation performance in Table \ref{Table:MC}, which records for each estimator $\tilde \beta$  the norm of the bias $\|\Ep[\tilde \beta] - \beta_0\|$ and also the empirical risk $[\Ep[x_i' ( \tilde \beta - \beta_0 )]^2]^{1/2}$ for recovering the regression function.   Penalized quantile regression has a substantial bias, as we would expect from the definition of the estimator which penalizes large deviations of coefficients from zero. We see that the post-penalized quantile regression drastically improves upon the penalized quantile regression, particularly in terms of reducing the bias, which results in a much lower overall empirical risk.  Notably, despite that under the higher noise level the penalized estimator never recovers the true model correctly the post-penalized estimator still performs well.  This is because the penalized estimator always manages to select the most important regressors.   We also see that the empirical risk of the post-penalized estimator is within a factor of $\sqrt{\log p}$ of the empirical risk of the oracle estimator, as we would expect from our theoretical results.   Under the lower noise level, the post-penalized estimator performs almost identically to the ideal oracle estimator.  We would expect this since in this case the penalized estimator selects the model especially well, making the post-penalized estimator nearly the oracle.   In summary, we find the estimation performance of the penalized and post-penalized estimators to be in close agreement with our theoretical results.

\begin{figure}[!p]
\centering
\includegraphics[width=0.49\textwidth]{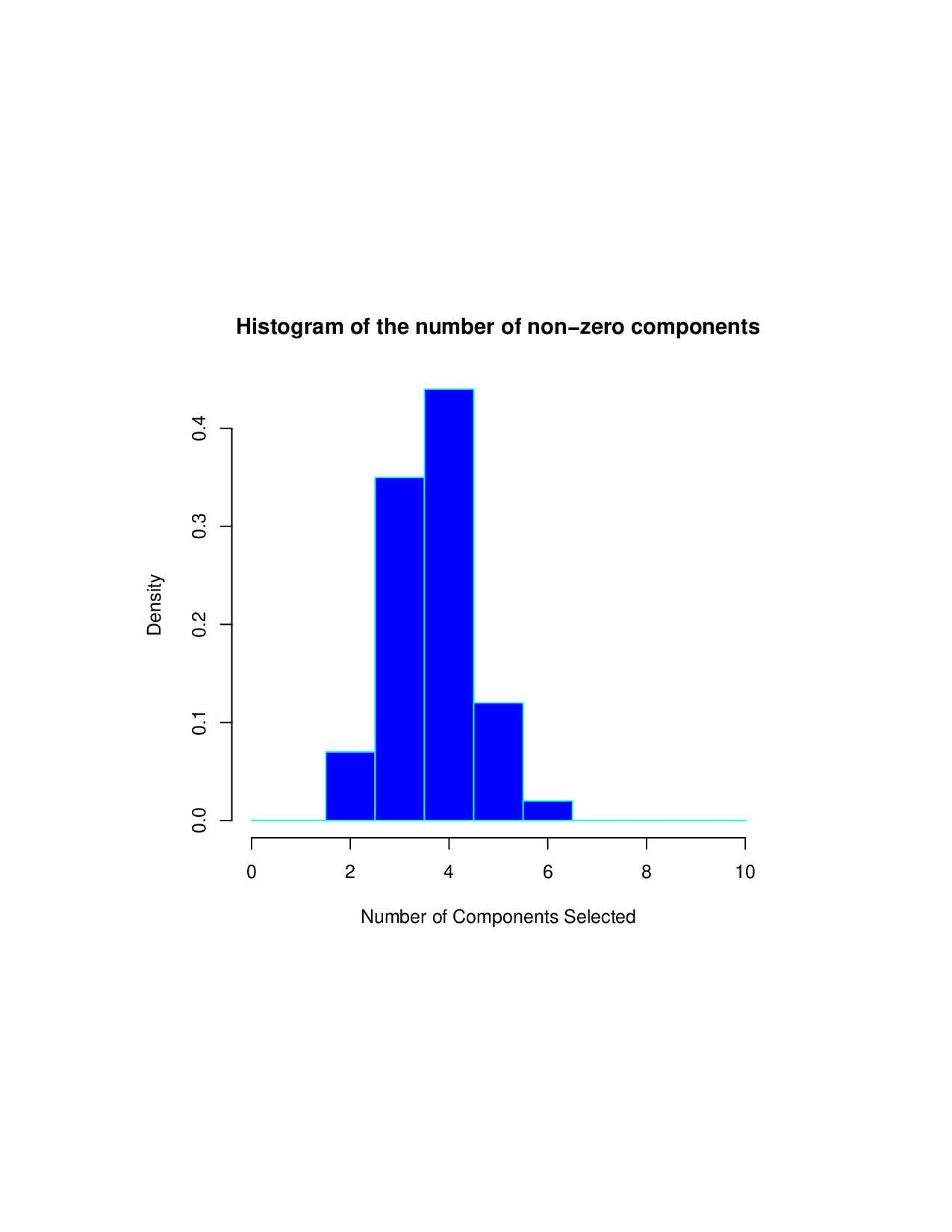}
\includegraphics[width=0.49\textwidth]{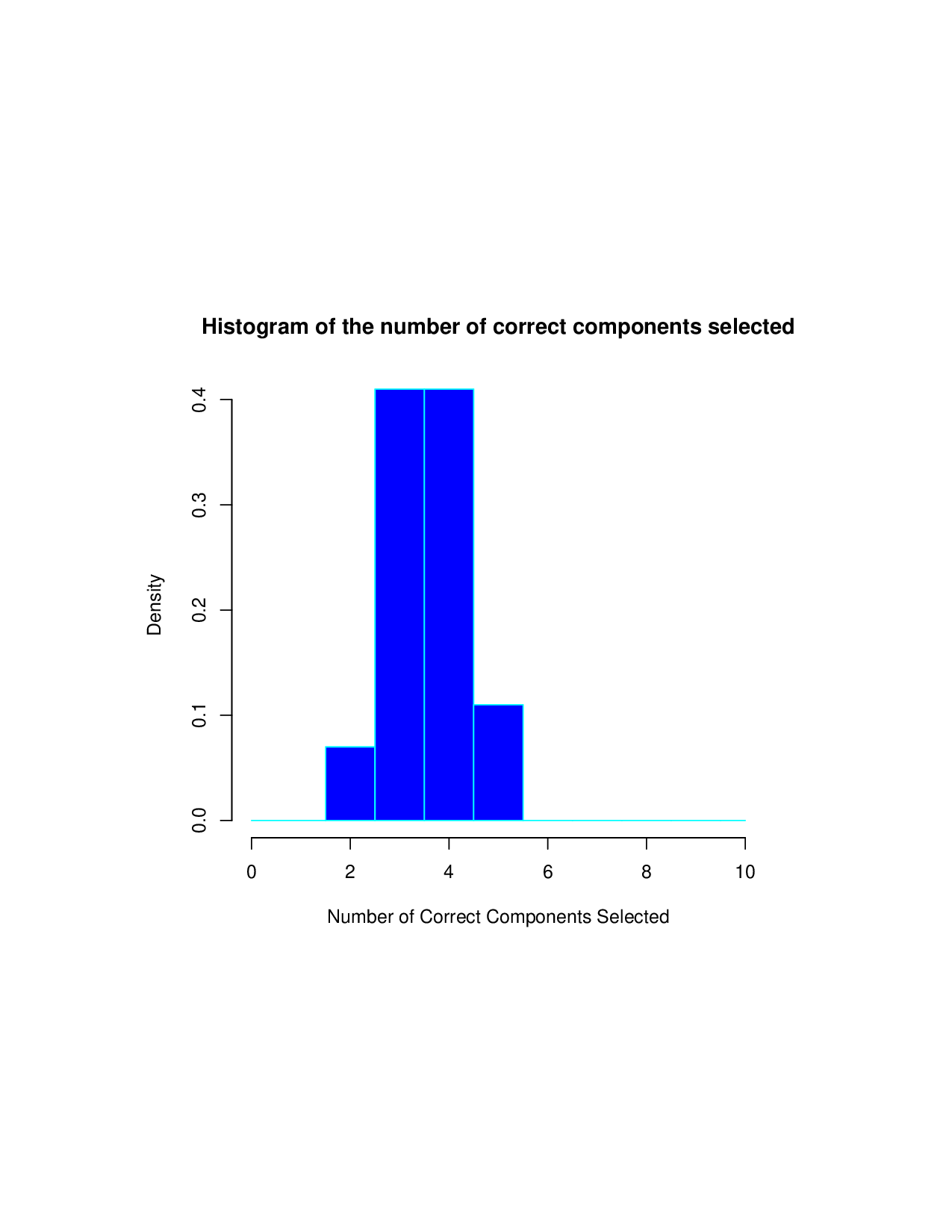}
\caption{ The figure summarizes the covariate selection results
for the design with $\sigma = 1$,  based on  $100$ Monte
Carlo repetitions. The left panel plots the histogram for the
number of covariates selected out of the possible 500 covariates.
The right panel plots the histogram for the number of significant
covariates selected; there are in  total $6$ significant
covariates amongst $500$ covariates. The sample size for each repetition was $n=100$.}\label{Fig:MCfirst01}
\end{figure}

\begin{figure}[!p]
\centering
\includegraphics[width=0.49\textwidth]{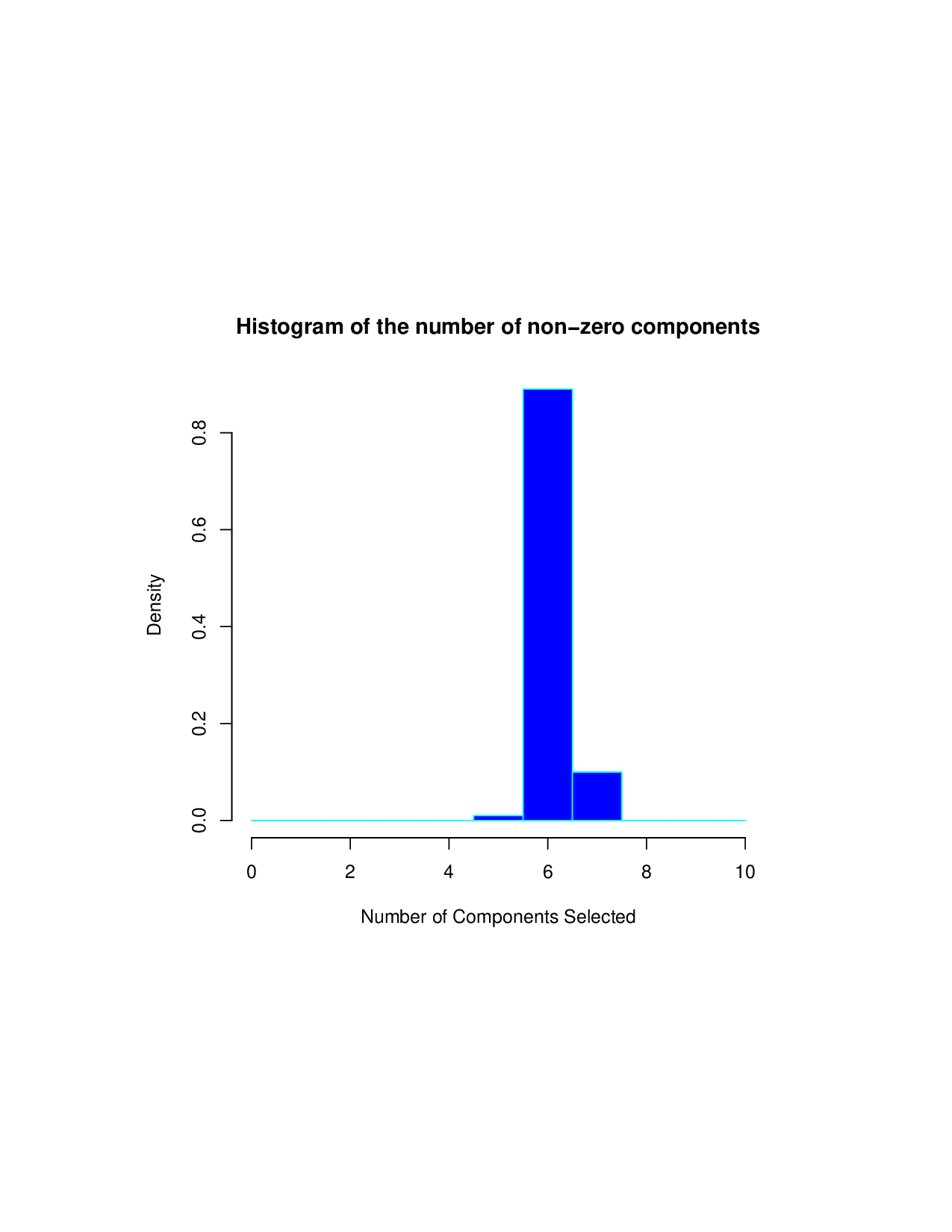}
\includegraphics[width=0.49\textwidth]{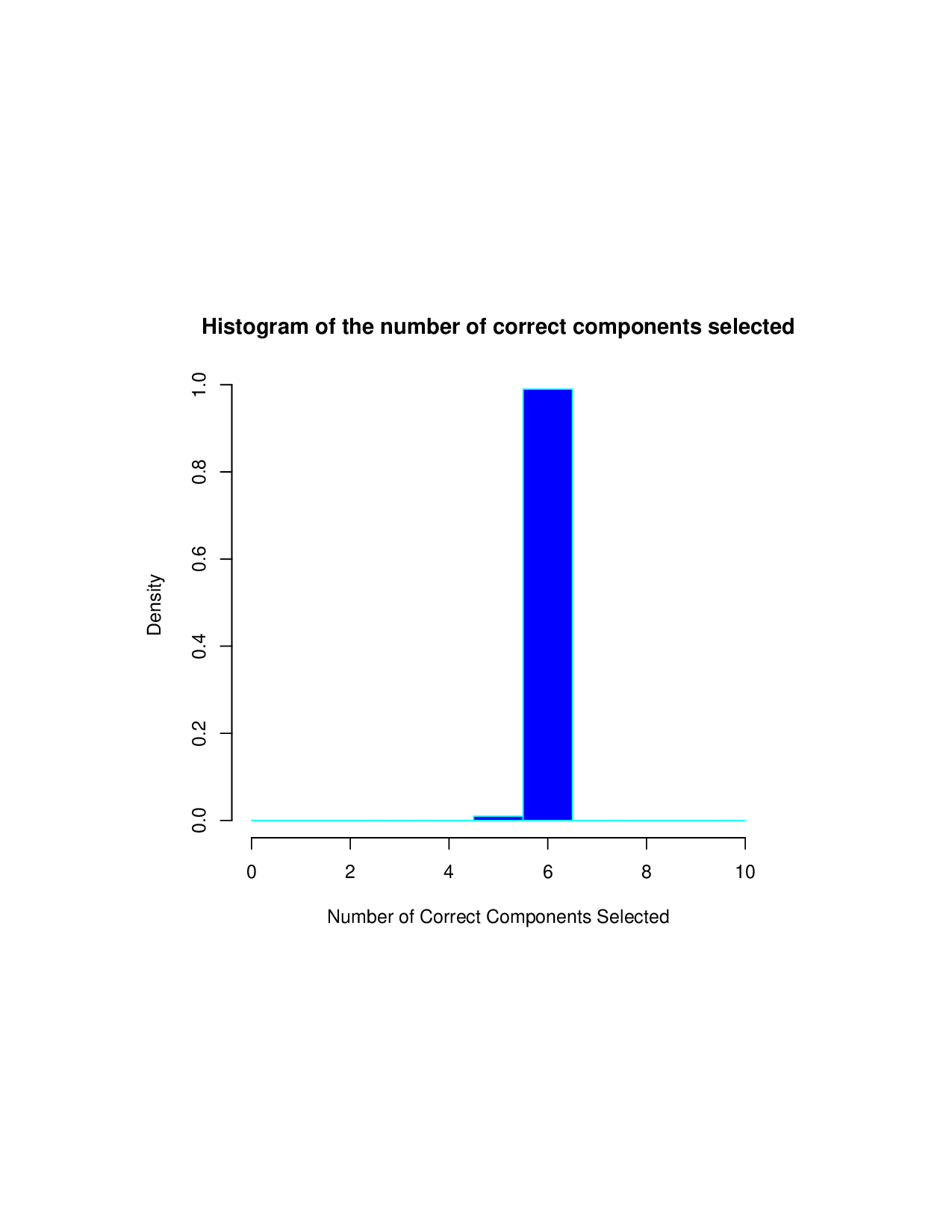}%
\caption{The figure summarizes the covariate selection results for
the design with $\sigma = 0.1$,  based on  $100$ Monte Carlo repetitions.  The left
panel plots the histogram for the number of covariates selected
out of the possible 500 covariates. The right panel plots the
histogram for the number of significant covariates selected; there
are in total $6$ significant covariates amongst $500$ covariates. The sample size for each repetition was $n=100$.}\label{Fig:MCsecond01}
\end{figure}

\begin{table}[!hp]\title{Monte Carlo Results}

\begin{center}

{\bf Design A ($\sigma = 1$)}
\end{center}
\begin{tabular}{lccccc}
\hline
 & Mean $\ell_0$-norm & Mean $\ell_1$-norm & Bias  & Empirical Risk \\
  \hline
Penalized QR & 3.67 & 1.28 & 0.92 & 1.22 \\
  Post-Penalized QR & 3.67 & 2.90 & 0.27 & 0.57 \\
  Oracle QR & 6.00 & 3.31 & 0.03 & 0.33 \\
\hline
\\
\end{tabular}

\begin{center}
{\bf Design B ($\sigma = 0.1$)}
\end{center}
\begin{tabular}{lccccc}
\hline
& Mean $\ell_0$-norm & Mean $\ell_1$-norm & Bias  & Empirical Risk \\
  \hline
Penalized QR & 6.09 & 2.98 & 0.13 & 0.19 \\
  Post-Penalized QR & 6.09 & 3.28 & 0.00 & 0.04 \\
  Oracle QR & 6.00 & 3.28 & 0.00 & 0.03 \\
\hline
\\
\end{tabular}
\caption{The table displays the average $\ell_0$ and $\ell_1$ norm
of the estimators as well as mean bias and empirical risk. We
obtained the results using $ 100 $ Monte Carlo repetitions for
each design.} \label{Table:MC}
\end{table}

\subsection{International Economic Growth Example}

In this section we apply $\ell_1$-penalized quantile regression to
an international economic growth example, using it primarily as a
method for model selection. We use the Barro and Lee data
consisting of a panel of 138 countries for the period of 1960 to
1985. We consider the national growth rates in gross domestic
product (GDP) per capita as a dependent variable $y$ for the periods
1965-75 and 1975-85.\footnote{The growth rate in GDP over a period from $t_1$ to $t_2$ is commonly defined as $\log
(GDP_{t_2}/GDP_{t_1})-1$.}  In our analysis, we will consider
a model with $p=60$ covariates, which allows for a total of
$n=90$ complete observations.  Our goal here is to select a subset
of these covariates and briefly compare the resulting models to
the standard models used in the empirical growth literature (Barro
and Sala-i-Martin \cite{BarroLee1994}, Koenker and Machado
\cite{KoenkerMachado1999}).

One of the central issues in the empirical growth literature is the
estimation of the effect of an initial (lagged) level of GDP per capita on the growth rates of GDP per capita. In particular, a key prediction from the classical Solow-Swan-Ramsey growth model is the hypothesis of convergence, which states that poorer countries should typically grow faster and therefore should tend to catch up with the richer countries. Thus, such a hypothesis states that the effect of the initial level of GDP on the growth rate should be negative. As pointed out in Barro and Sala-i-Martin \cite{BarroSala1995}, this hypothesis is rejected using a simple bivariate regression of growth rates on the initial level of GDP. (In our case, median regression yields a positive coefficient of $0.00045$.) In order to reconcile the data and the theory, the literature has focused on estimating the effect \textit{conditional} on the pertinent characteristics of countries.  Covariates that describe such characteristics can include variables measuring education and science policies, strength of market institutions, trade openness, savings rates and others \cite{BarroSala1995}.  The theory then predicts that  for countries with similar other characteristics the effect of the initial level of GDP on the growth rate should be negative (\cite{BarroSala1995}).  

Given that the number of covariates we can condition on is
comparable to the sample size, covariate selection becomes an
important issue in this analysis (\cite{OneMillion}, \cite{Two
million}).  In particular, previous findings came under
severe criticism for relying on ad hoc procedures for covariate
selection.  In fact, in some cases, all of the previous findings
have been questioned (\cite{OneMillion}).   Since the number of
covariates is high, there is no simple way to resolve the model
selection problem using only classical tools. Indeed the
number of possible lower-dimensional models is very large, although \cite{OneMillion} and \cite{Two million}  attempt to
search over several millions of these models.   Here we use the
Lasso selection device, specifically $\ell_1$-penalized median
regression, to resolve this important issue.

Let us now turn to our empirical results. We performed covariate
selection using $\ell_1$-penalized median regression, where
we initially used our data-driven choice of penalization parameter
$\lambda$.   This initial choice led us to select no covariates,
which is consistent with the situations in which the true
coefficients are not well-separated from zero. We then proceeded
to slowly decrease the penalization parameter in order to allow
for some covariates to be selected.   We present the model
selection results in Table \ref{Table:Growth}. With the first
relaxation of the choice of $\lambda$, we select the black market
exchange rate premium (characterizing trade openness) and a
measure of political instability. With a second relaxation of the
choice of $\lambda$ we select an additional set of educational
attainment variables, and several others reported in the table.
With a third relaxation of $\lambda$ we include yet another set of
variables also reported in the table. We refer the reader to
\cite{BarroLee1994} and \cite{BarroSala1995} for a complete
definition and discussion of each of these variables.

We then proceeded to apply ordinary median 
regression to the selected models and we also report the standard confidence intervals for these estimates.
Table \ref{Table:CondidenceInterval} shows these results.
We should note that the confidence intervals do not take into account that we have selected the models using the data.  (In
an ongoing companion work, we are working on devising procedures
that will account for this.)   We find that in all models with additional selected covariates, the median regression coefficients on the initial
level of GDP is always negative and the standard confidence intervals do not include zero. Similar conclusions also hold for quantile regressions with quantile indices in the middle range.
In summary, we believe that our empirical findings support the
hypothesis of convergence from the classical Solow-Swan-Ramsey
growth model.  Of course, it would be good to find formal
inferential methods to fully support this hypothesis.  Finally,
our findings also agree and thus support the previous findings
reported in  Barro and Sala-i-Martin \cite{BarroLee1994} and
Koenker and Machado \cite{KoenkerMachado1999}.

\begin{table}[!hp]\title{Confidence Intervals after Model Selection for the International Growth Regressions}
\begin{tabular}{ccccc}
\\
\hline
Penalization & & \multicolumn{2}{c}{Real GDP per capita (log)} \\
Parameter & & \\
$\lambda=1.077968$ & & Coefficient & $90\%$ Confidence Interval \\
\hline
$\lambda/2$ & & $-0.01691$      & $[-0.02552, -0.00444]$\\
$\lambda/3$ & & $-0.04121$      & $[-0.05485, -0.02976]$\\
$\lambda/4$ & & $-0.04466$      & $[-0.06510, -0.03410]$\\
$\lambda/5$ & & $-0.05148$      & $[-0.06521, -0.03296]$\\
\hline
\end{tabular}\caption{The table above displays the coefficient and a $90\%$ confidence interval associated with each model selected by the corresponding penalty parameter. The selected models are displayed in Table \ref{Table:Growth}.}\label{Table:CondidenceInterval}
\end{table}


{\small
\begin{table}[!hp]\title{Model Selection Results for the International Growth Regressions}
\renewcommand{\arraystretch}{1}

 \begin{tabular}{ccc}
\hline
Penalization & & \\
Parameter & & Real GDP per capita (log) is included in all models \\
$\lambda=1.077968$ &  & Additional Selected Variables \\
\hline
$\lambda$ &  & -  \\
\rowcolor[gray]{0.9} $\lambda/2$  &  &  Black Market Premium (log)  \\
\rowcolor[gray]{0.9}                 &  &  Political Instability \\
$\lambda/3$  &  &  Black Market Premium (log)  \\
                  &  &  Political Instability \\
                  &  &  Measure of tariff restriction  \\
                  &  &  Infant mortality rate \\
                  &  &  Ratio of real government ``consumption" net of defense and education\\
                  &  &  Exchange rate  \\
                  &  &  \% of ``higher school complete" in female population \\
                  &  &  \% of ``secondary school complete" in male population \\
\rowcolor[gray]{0.9}  $\lambda/4$ &  & Black Market Premium (log)  \\
\rowcolor[gray]{0.9}                  &  & Political Instability \\
\rowcolor[gray]{0.9}                  &  & Measure of tariff restriction  \\
\rowcolor[gray]{0.9}                  &  & Infant mortality rate \\
\rowcolor[gray]{0.9}                  &  & Ratio of real government ``consumption" net of defense and education\\
\rowcolor[gray]{0.9}                  &  & Exchange rate \\
\rowcolor[gray]{0.9}                  &  &  \% of ``higher school complete" in female population \\
\rowcolor[gray]{0.9}                  &  &  \% of ``secondary school complete" in male population \\ \rowcolor[gray]{0.9}                  &  & Female gross enrollment ratio for higher education  \\
\rowcolor[gray]{0.9}                  &  & \% of ``no education" in the male population \\
\rowcolor[gray]{0.9}                  &  & Population proportion over 65 \\
\rowcolor[gray]{0.9}                  &  & Average years of secondary schooling in the male population \\

$\lambda/5$                      &  & Black Market Premium (log)  \\ 
                  &  & Political Instability \\ 
                  &  & Measure of tariff restriction  \\ 
                  &  & Infant mortality rate \\ 
                  &  & Ratio of real government ``consumption" net of defense and education\\ 
                  &  & Exchange rate \\ 
                  &  &  \% of ``higher school complete" in female population \\  
                  &  &  \% of ``secondary school complete" in male population \\ 
                  &  & Female gross enrollment ratio for higher education  \\ 
                  &  & \% of ``no education" in the male population \\ 
                  &  & Population proportion over 65 \\ 
                  &  & Average years of secondary schooling in the male population \\ 
                  &  & Growth rate of population \\ 
                  &  &  \% of ``higher school attained" in male population \\ 
                  &  & Ratio of nominal government expenditure on defense to nominal GDP\\ 
                  &  & Ratio of import to GDP\\ 
\hline
\end{tabular}\caption{For this particular decreasing sequence of penalization parameters we obtained nested models.
}\label{Table:Growth}
\end{table}}



\appendix

\section{Proof of Theorem 1}

\begin{proof}[Proof of Theorem \ref{Thm:BoundLAMBDA}]
We note $ \Lambda \leq \W \max_{1\leq j\leq
p} \sup_{u\in\mathcal{U}} n\En\[ (u- 1\{u_i \leq u\}) x_{ij}/\hat\sigma_j\]$. For any $u \in \mathcal{U}$, $j \in \{1,\ldots,p\}$
we have by Lemma 1.5 in \cite{LedouxTalagrandBook} that
$P(| \Gn[(u- 1\{u_i \leq u\}) x_{ij}/\hat \sigma_j] | \geq \widetilde K) \leq 2 \exp(-\widetilde K^2/2).$
Hence by the symmetrization lemma for probabilities, Lemma 2.3.7 in
\cite{vdV-W},  with $\widetilde K \geq 2\sqrt{\log 2}$ we have
\begin{equation}\label{bound lambda 1}
\begin{array}{rcl} P( \Lambda > \widetilde K \sqrt{n} |X ) & \leq & 4 P\( \sup_{u\in\mathcal{U}}\max_{1\leq j\leq p} |\Gn^o[ (u- 1\{u_i \leq u\}) x_{ij}/\hat\sigma_j] | > \widetilde K/ (4\W) |X \) \\
& \leq & 4 p \max_{1\leq j \leq p} P\( \sup_{u\in\mathcal{U}}  |\Gn^o[ (u- 1\{u_i \leq u\}) x_{ij}/\hat\sigma_j] | > \widetilde K / (4\W) |X \),
\end{array}
 \end{equation}
where $\Gn^o$ denotes the symmetrized empirical process (see \cite{vdV-W}) generated by the Rademacher variables
$\varepsilon_i, i=1,...,n$, which are independent of $U= (u_1,...,u_n)$ and $X=(x_1,..., x_n)$.
Let us condition on  $U$ and $X$, and  define $\mathcal{F}_j
=\{\varepsilon_i x_{ij} (u- 1\{u_i \leq u\})/\hat\sigma_j : u \in \mathcal{U}\}$ for $j=1,\ldots,p$.
The VC dimension of $\mathcal{F}_j$ is at most 6. Therefore, by Theorem
2.6.7 of \cite{vdV-W} for some universal constant $C_1'\geq 1$ the function class
$\mathcal{F}_j$ with envelope function $F_j$ obeys
$$ N(\varepsilon\|F_j\|_{\mathbb{P}_n,2}, \mathcal{F}_j, L_2(\mathbb{P}_n)) \leq n(\varepsilon, \mathcal{F}_j)= C_1'\cdot  6 \cdot (16e)^6 (1/\varepsilon)^{10},$$
where $N(\varepsilon, \mathcal{F}, L_2(\mathbb{P}_n))$ denotes the minimal number of balls of radius $\varepsilon$
with respect to the $L_2(\mathbb{P}_n)$ norm $\|\cdot\|_{\mathbb{P}_n,2}$ needed to cover the class of functions $\mathcal{F}$; see \cite{vdV-W}.

Conditional on the data $U= (u_1,\ldots,u_n)$ and $X=(x_1,\ldots, x_n)$, the symmetrized empirical process $\{\Gn^o(f), f \in \mathcal{F}_j\}$ is sub-Gaussian
with respect to the $L_2(\Pn)$ norm by the Hoeffding inequality; see, e.g., \cite{vdV-W}.  Since $\|F_j\|_{\Pn,2}\leq 1$ and $\rho(\mathcal{F}_j,\Pn) = \sup_{f \in \mathcal{F}_j} \|f\|_{\Pn,2}/ \|F\|_{\Pn,2} \leq 1$, we have $$\|F_j\|_{\Pn,2}\int_{0}^{\rho(\mathcal{F}_j,\Pn)/4}\sqrt{\log n(\varepsilon,\mathcal{F}_j)}d\varepsilon \leq \bar e := (1/4) \sqrt{ \log( 6 C_1' (16e)^6)} + (1/4)\sqrt{10\log 4}.$$ By Lemma \ref{expo1} with $D=1$, there is a universal constant $c$ such that for any $K \geq 1 $: \begin{eqnarray}
P\(  \sup_{f \in \mathcal{F}_j} |\Gn^o(f)| > K c \bar e |X,U\) & \leq & \int_0^{1/2} \varepsilon^{-1} n(\varepsilon, \mathcal{F}_j)^{-(K^2-1)}d\varepsilon \nonumber \\
& \leq & (1/2)[6C_1'(16e)^6]^{-(K^2-1)} \frac{(1/2)^{10(K^2-1)}}{ 10(K^2-1) }\label{bound lambda 2}.
\end{eqnarray}
By (\ref{bound lambda 1}) and (\ref{bound lambda 2}) for any $k \geq 1$ we have
{\small$$\begin{array}{rl}
\displaystyle  P \( \Lambda \geq k \cdot (4\sqrt{2}c\bar{e}) \W \sqrt{n \log p} | X \) & \leq \displaystyle   4p \max_{1\leq j \leq p} \Ep_U P\(  \sup_{f \in \mathcal{F}_j} |\Gn^o(f)| > k\sqrt{2\log p} \ c\bar e |X,U\)\\
 & \leq \displaystyle p^{-6k^2+1}  \leq  p^{-k^2+1} \\
 \end{array}$$}\noindent since $( 2k^2\log p - 1 ) \geq (\log 2 - 0.5) k^2\log p$  for $p\geq 2$.  Thus, result (i) holds with $C_\Lambda := 4\sqrt{2}c\bar e$. Result (ii) follows immediately by choosing $k = \sqrt{ 1 +
\log(1/\alpha) / \log p }$ to make the right side of the display above equal to $\alpha$.
\end{proof}

\section{Proofs of Lemmas 3-5 (used in Theorem 2)}

\begin{proof}[Proof of Lemma \ref{Lemma:E1}] (Restricted Set)
Part 1. By condition D.3, with probability $1-\gamma$, for every $j=1,\ldots,p$ we
have $1/2 \leq \hat\sigma_j \leq 3/2$, which implies (\ref{E1normEquiv}).

Part 2. Denote the true rankscores by $a_i^*(u) = u-1\{y_i\leq
x_i'\beta(u)\}$ for $i=1,\ldots,n$. Next recall that $\hat
Q_u(\cdot)$ is a convex function and $\En\[ x_{i}a_i^*(u)\] \in
\partial \hat Q_u(\beta(u))$. Therefore, we have
$$ \hat Q_u(\hat \beta(u)) \geq \hat Q_u(\beta(u)) + \En\[ x_{i}a_i^*(u)\]'(\hat \beta(u) - \beta(u)).$$
Let $\widehat D = {\rm diag}[\hat\sigma_1,\ldots,\hat \sigma_p]$
and note that $\lambda\sqrt{u(1-u)}(c_0-3)/(c_0+3) \geq
n\|\widehat D^{-1}\En\[ x_{i}a_i^*(u)\]\|_{\infty}$ with
probability at least $1-\alpha$. By optimality of $\hat \beta(u)$
for the $\ell_1$-penalized problem, we have
$$
\begin{array}{rcl}
\displaystyle 0 & \leq & \hat Q_u(\beta(u)) - \hat Q_u(\hat\beta(u)) + \frac{\lambda\sqrt{u(1-u)}}{n}\|\beta(u)\|_{1,n} - \frac{\lambda\sqrt{u(1-u)}}{n}\|\hat \beta(u)\|_{1,n}\\
& \leq & \left| \En\[ x_{i}a_i^*(u)\]'(\hat \beta(u) - \beta(u))\right| + \frac{\lambda\sqrt{u(1-u)}}{n}\(\|\beta(u)\|_{1,n} - \|\hat \beta(u)\|_{1,n} \)\\
& = & \left\|\widehat D^{-1} \En\[ x_{i}a_i^*(u)\]\right\|_{\infty}\left\|\widehat D (\hat \beta(u) - \beta(u))\right\|_1 + \frac{\lambda\sqrt{u(1-u)}}{n}\(\|\beta(u)\|_{1,n} - \|\hat \beta(u)\|_{1,n}\) \\
& \leq  &  \frac{\lambda\sqrt{u(1-u)}}{n}\sum_{j=1}^p \( \frac{c_0-3}{c_0+3}\hat\sigma_j\left|\hat \beta_j(u) - \beta_j(u)\right| + \hat\sigma_j|\beta_j(u)| - \hat\sigma_j|\hat \beta_j(u)|\), \\
\end{array}
$$ with probability at least $1-\alpha$. After canceling $\lambda\sqrt{u(1-u)}/n$ we obtain
 \begin{equation}\label{to show RC 1}
 \(1-\frac{c_0-3}{c_0+3}\) \|\hat \beta(u) - \beta(u)\|_{1,n} \leq \sum_{j=1}^p \hat\sigma_j\(\left|\hat \beta_j(u) - \beta_j(u)\right| + |\beta_j(u)| - |\hat \beta_j(u)| \). \end{equation}
Furthermore, since $\left|\hat \beta_j(u) - \beta_j(u)\right| + |\beta_j(u)| -
|\hat \beta_j(u)| = 0$ if $\beta_j(u) = 0$, i.e. $j \in T_u^c$,
 \begin{equation}\label{to show RC 2}
\sum_{j=1}^p \hat\sigma_j\(\left|\hat \beta_j(u) - \beta_j(u)\right| + |\beta_j(u)| - |\hat \beta_j(u)| \) \leq  2\|\widehat \beta_{T_u}(u) = \beta(u)\|_{1,n}.
 \end{equation}
(\ref{to show RC 1}) and (\ref{to show RC 2}) establish that $ \| \hat \beta_{T_u^c}(u) \|_{1,n} \leq (c_0/3)
\| \hat \beta_{T_u}(u) - \beta(u) \|_{1,n}$ with probability at least $1-\alpha$.
In turn, by Part 1 of this Lemma, $\| \hat \beta_{T_u^c}(u) \|_{1,n}
\geq (1/2) \| \hat \beta_{T_u^c}(u) \|_{1}$ and $\| \hat
\beta_{T_u}(u) - \beta(u) \|_{1,n} \leq (3/2) \| \hat \beta_{T_u}(u) -
\beta(u) \|_{1}$, which holds with probability at least $1-\gamma$.  Intersection
of these two event holds with probability at least
$1-\alpha-\gamma$. Finally, by Lemma \ref{Lemma:E4partI}, $\|\hat \beta(u)\|_0 \leq n$ with probability 1 uniformly in $u\in\mathcal{U}$.
\end{proof}

\begin{proof}[Proof of Lemma \ref{Lemma:E2}] (Identification in Population)  Part 1. Proof of claims (\ref{E.2-RE.addon})-(\ref{E.2-RE.1}).
By ${\rm RE}(c_0,m)$ and by $\delta \in A_u$
$$  \|J^{1/2}_u\delta\| \geq  \|(\Ep[x_ix_i'])^{1/2}\delta\| \underf^{1/2}  \geq \|\delta_{T_u}\|   \underf^{1/2}\kappa_0 \geq \frac{ \underf^{1/2}\kappa_0}{\sqrt{s}} \|\delta_{T_u}\|_1 \geq \frac{ \underf^{1/2}\kappa_0}{\sqrt{s}(1+c_0)} \|\delta\|_1.$$

Part 2. Proof of claim (\ref{E.2-RE.2}). Proceeding similarly to \cite{BickelRitovTsybakov2009}, we note that the $k$th largest
in absolute value component of $\delta_{T_u^c}$ is less than
$\|\delta_{T_u^c}\|_1/k$. Therefore by $\delta \in A_u$ and $|T_u|\leq s$
$$ \|\delta_{(T_u\cup\overline{T}_u(\delta,m))^c}\|^2 \leq \sum_{k\geq m+1} \frac{ \|\delta_{T_u^c}\|_1^2}{k^2} \leq \frac{\|\delta_{T_u^c}\|_1^2}{m} \leq c_0^2\frac{\|\delta_{T_u}\|_1^2}{m} \leq c_0^2\|\delta_{T_u}\|^2\frac{s}{m} \leq c_0^2\|\delta_{T_u\cup\overline{T}_u(\delta,m)}\|^2\frac{s}{m},$$
so that  $\|\delta\| \leq \(1+c_0\sqrt{s/m}\) \|\delta_{T_u\cup\overline{T}_u(\delta,m)}\|$; and the last term is bounded by ${\rm RE}(c_0,m)$,
$$ \(1+c_0\sqrt{s/m}\) \|(\Ep[x_ix_i'])^{1/2}\delta\|/\kappa_m \leq \(1+c_0\sqrt{s/m}\) \|J_u^{1/2}\delta\|/[\underf^{1/2}\kappa_m] .$$

Part 3. The proof of claim  (\ref{E.2-RNI}) proceeds in two steps.   Step 1. (Minoration).  Define the maximal radius over which the criterion function can be minorated by a quadratic function
$$ r_{A_u} = \sup_{r} \left\{ r \ : Q_u(\beta(u)+\tilde \delta) - Q_u(\beta(u)) \geq \frac{1}{4} \|J_u^{1/2} \tilde \delta\|^{2}, \ \mbox{for all} \ \tilde \delta\in A_u, \|J_u^{1/2}\tilde \delta\| \leq r \right\}.$$
Step 2 below shows that  $r_{A_u} \geq 4q$.   By construction of $r_{A_u}$ and
the convexity of $Q_u$,
 $$ \begin{array}{lll}
 && Q_u(\beta(u) + \delta) - Q_u(\beta(u))   \\
&&     \geq \frac{\|J_u^{1/2}\delta\|^2}{4} \wedge \left\{ \frac{\|J_u^{1/2}\delta\|}{r_{A_u}} \cdot \inf_{\tilde \delta \in A_u, \| J_u^{1/2}\tilde \delta\| \geq r_{A_u}} \!\! Q_u(\beta(u)+\tilde \delta) - Q_u(\beta(u)) \right\}\\
&&  \geq   \frac{\|J_u^{1/2}\delta\|^2}{4} \wedge \left\{ \frac{\|J_u^{1/2}\delta\|}{r_{A_u}} \frac{r_{A_u}^2}{4}\right\}
 \geq    \frac{\|J_u^{1/2}\delta\|^2}{4} \wedge \left\{ q\|J_u^{1/2}\delta\|\right\}, \text{  for any $\delta \in A_u$. }
\end{array}$$

Step 2. ($r_{A_u} \geq 4q$) Let $F_{y|x}$ denote the conditional distribution of
$y$ given $x$. From \cite{Knight1998},
for any two scalars $w$ and $v$  we have that
\begin{equation}\label{Eq:TrickRho}
\rho_u(w-v) - \rho_u(w) = -v (u - 1\{w\leq 0\}) + \int_0^v(
1\{w\leq z\} - 1\{w\leq 0\})dz.
\end{equation}
Using (\ref{Eq:TrickRho}) with $w=y - x'\beta(u)$ and $v =
x'\delta$ we conclude $\Ep\[-v(u-1\{w\leq
0\})\] = 0$. Using the law of iterated expectations and mean value
expansion, we obtain for $\tilde z_{x,z} \in [0,z]$
\begin{equation}\label{Eq:Knight} \begin{array}{rcl}
&& Q_u(\beta(u) + \delta) - Q_u(\beta(u)) =  \Ep\[ \int_0^{x'\delta} F_{y|x}(x'\beta(u) + z) - F_{y|x}(x'\beta(u)) dz \] \\
&& =  \Ep\[ \int_0^{x'\delta} zf_{y|x}(x'\beta(u)) + \frac{z^2}{2}f'_{y|x}(x'\beta(u)+\tilde z_{x,z}) dz \] \\
&& \geq  \frac{1}{2} \|J_u^{1/2}\delta\|^2  - \frac{1}{6}\bar f ' \Ep[|x'\delta|^3] \geq \frac{1}{4} \|J_u^{1/2}\delta\|^2  + \frac{1}{4} \underf \Ep[|x'\delta|^2] - \frac{1}{6} \bar f' \Ep[|x'\delta|^3].\\
\end{array}
\end{equation}
Note that for $\delta \in A_u$, if $ \|J_u^{1/2}\delta\| \leq  4q \leq
 (3/2) \cdot (\underf^{3/2}/\bar{f'}) \cdot \inf_{\delta\in A_u, \delta \neq 0} \Ep\[|x'\delta|^2\]^{3/2}/\Ep\[|x'\delta|^3\]$,
it follows that  $(1/6)\bar f'\Ep[|x'\delta|^3] \leq
(1/4) \underf \Ep[|x'\delta|^2]$. This and (\ref{Eq:Knight}) imply $r_{A_u} \geq 4q$.
\end{proof}

\begin{proof}[Proof of Lemma \ref{Lemma:E3}] (Control of Empirical Error) We divide the proof in four steps.

Step 1. (Main Argument) Let
 $$\mathcal{A}(t) :=\emperror(t)\sqrt{n} = \sup_{u \in \mathcal{U}, \|J^{1/2}_u\delta\|\leq t, \delta \in A_u}  |\mathbb{G}_n [ \rho_u(y_i - x_i'(\beta(u) + \delta)) - \rho_u(y_i - x_i'\beta(u)) ]| $$
Let $\Omega_1$ be the event in which
$ \max_{1\leq j \leq p} \left|
\hat \sigma_j - 1 \right| \leq 1/2$, where $P(\Omega_1) \geq 1- \gamma$.

In order to apply the symmetrization lemma,  Lemma 2.3.7 in \cite{vdV-W}, to bound the tail probability of $\mathcal{A}(t)$
first note that for any fixed $\delta \in
A_u$, $u \in \mathcal{U}$ we have
 \begin{eqnarray*} & & \textrm{var}\(\bG_n\[\rho_u(y_i - x_i'(\beta(u)+\delta)) - \rho_u(y_i - x_i'\beta(u))\]\)
 \leq \Ep\[(x_i'\delta)^2\] \leq  t^2/\underf
 \end{eqnarray*}
 Then application of the symmetrization lemma for probabilities, Lemma 2.3.7 in \cite{vdV-W}, yields
 \begin{equation}\label{EQ: bound PA}  P( \mathcal{A}(t) \geq M ) \leq \frac{2P( \mathcal{A}^o(t) \geq M/4 )}{1- t^2/(\underf M^2)} \leq \frac{2P( \mathcal{A}^o(t) \geq M/4 | \Omega_1 )+ 2P(\Omega_1^c)}{1- t^2/(\underf M^2)}, \end{equation}
where $\mathcal{A}^o(t)$ is the symmetrized version of
$\mathcal{A}(t)$, constructed by replacing the empirical process $\mathbb{G}_n$ with its symmetrized version $\mathbb{G}^o_n$, and $P(\Omega_1^c)\leq \gamma$. We set $M> M_1:= t (3/\underf)^{1/2}$, which makes the denominator on right side of (\ref{EQ: bound PA})
greater than $2/3$. Further, Step 3 below shows that
$P( \mathcal{A}^o(t) \geq M/4 | \Omega_1) \leq p^{-A^2}$ for
$$M/4 \geq M_2:= t \cdot A \cdot 18\sqrt{2} \cdot \Gamma \cdot \sqrt{2\log p + \log ( 2 + 4\sqrt{2}L\underf^{1/2}\kappa_0/t)
},  \ \ \Gamma = \sqrt{s}(1+c_0)/[\underf^{1/2}\kappa_0]. $$
We conclude that with probability at least $1-3\gamma-3p^{-A^2}$,
$\mathcal{A}(t) \leq  M_1 \vee (4M_2).$

\comment{Under conditions D.2 and D.4 $\log (L\underf^{1/2}\kappa_0)  \lesssim \log
(p\vee n)$.}
Therefore, there is a universal constant
$C_E$ such that  with probability at least $1-3\gamma-3p^{-A^2}$, $$\mathcal{A}(t) \leq   t \cdot C_E\cdot
\frac{(1+c_0)A }{\underf^{1/2}\kappa_0}\sqrt{s \log (p \vee [L\underf^{1/2}\kappa_0/t])}$$ and the result follows.

Step 2. (Bound on $P( \mathcal{A}^o(t) \geq K|\Omega_1)$).     We begin by noting that
Lemma \ref{Lemma:E1} and \ref{Lemma:E2} imply that $ \|\delta\|_{1,n}  \leq \frac{3}{2} \sqrt{s}(1+c_0) \| J^{1/2}_u \delta\|/[\underf^{1/2} \kappa_0]$ so that for all $u \in \mathcal{U}$
 \begin{equation}\label{Eq:SetInclusion} \{
\delta \in A_u \ : \ \| J^{1/2}_u \delta\| \leq
t  \} \subseteq \{\delta \in \RR^p : \|\delta\|_{1,n} \leq
2t\Gamma\}, \ \  \Gamma:= \sqrt{s}(1+c_0)/[\underf^{1/2}\kappa_0]. \end{equation}
Further, we let $\mathcal{U}_k = \{\hat u_1,\ldots,\hat u_k\}$ be an
$\varepsilon$-net of quantile indices in $\mathcal{U}$ with
\begin{equation}\label{eq: choice of varepsilon} \varepsilon \leq t \Gamma/(2\sqrt{2 s} L )   \text{ and }  \ k \leq  1/\varepsilon.
\end{equation}
By $\rho_u(y_i - x_i'(\beta(u) + \delta)) - \rho_u(y_i - x_i'\beta(u))= u x_i'\delta + w_i(x_i'\delta, u)$, for
$w_i(b,u):=(y_i-x_i'\beta(u)-b)_- - (y_i-x_i'\beta(u))_-$, and by (\ref{Eq:SetInclusion})
we have that  $\mA^o(t)  \ \leq \ \mB^o(t) + \mC^o(t)$, where
\begin{eqnarray*}
& & \mB^o(t) :=   \sup_{u \in \mathcal{U}, \|\delta\|_{1,n} \leq 2t\Gamma} |\mathbb{G}_n^o [x_i'\delta]  | \text{ and }
  \mC^o(t) := \sup_{u \in \mathcal{U}, \|\delta\|_{1,n} \leq 2t\Gamma} |\mathbb{G}_n^o [ w_i(\delta,u)]  |.  \end{eqnarray*}
Then we compute the bounds
\begin{eqnarray*}
P[ \mB^o(t) > K | \Omega_1 ] & \leq&   \min_{\lambda \geq 0} e^{-\lambda K} \Ep[e^{\lambda \mB^o(t)}| \Omega_1]  \text {  by Markov  } \\
 & \leq &  \min_{\lambda \geq 0} e^{-\lambda K} 2 p  \exp(  (2\lambda t\Gamma )^2/2  )   \text{  by Step 3  }  \\
 & \leq &  2p  \exp(-K^2/ ( 2\sqrt{2} t\Gamma )^2 ) \text{ by setting $\lambda =  K/ (2 t\Gamma )^2 $ }, \\
P[ \mC^o(t) > K | \Omega_1  ] & \leq&    \min_{\lambda \geq 0} e^{-\lambda K}  \Ep[e^{\lambda \mC^o(t)} | \Omega_1,X ]  \text {  by Markov   } \\
 & \leq &    \min_{\lambda \geq 0} \exp(-\lambda K) 2 (p/\varepsilon)\exp(  (16\lambda t\Gamma )^2/2  )   \text{  by Step 4  }  \\
 & \leq &  \varepsilon^{-1} 2p \exp(-K^2/ (16\sqrt{2} t\Gamma)^2 ) \text{ by setting $\lambda =  K/ (16 t\Gamma)^2 $ },
 \end{eqnarray*}
so that
\begin{eqnarray*}
P[\mA^o(t) >   2\sqrt{2} K + 16 \sqrt{2} K | \Omega_1] &\leq &   P[\mB^o(t) > 2\sqrt{2} K | \Omega_1] + P[\mC^o(t) > 16\sqrt{2} K | \Omega_1 ]  \\
& \leq &  2p (1 + \varepsilon^{-1})
\exp(-K^2/(t\Gamma)^2) .
 \end{eqnarray*}
Setting     $K =  A \cdot t\cdot \Gamma \cdot \sqrt{ \log
\{2p^2(1 + \varepsilon^{-1})\}}$, for $A \geq 1$, we get
$
P[ \mA^o(t) \geq  18\sqrt{2} K | \Omega_1 ] \leq   p^{-A^2}.
$

Step 3. (Bound on $\Ep[e^{\lambda \mB^o(t)}|\Omega_1]$) We bound
\begin{eqnarray*}
 \Ep[e^{\lambda \mB^o(t)}|\Omega_1]
 & \leq &   \Ep[\exp (  2\lambda t\Gamma \max_{j \leq p} | \bG_n^o(x_{ij})/\hat \sigma_j | )  |\Omega_1]    \\
 & \leq &    2 p \max_{j \leq p}\Ep[\exp \(  2\lambda t\Gamma  \bG_n^o(x_{ij})/\hat \sigma_j \) |\Omega_1]  \leq    2 p  \exp(  (2\lambda t\Gamma
)^2/2  ),
\end{eqnarray*}
where the first inequality follows from $
 \left|\mathbb{G}_n^o [x_i'\delta]  \right|\leq 2
\|\delta\|_{1,n} \max_{1\leq j \leq p} |
\bG_n^o(x_{ij})/\hat \sigma_j|$ holding under event $\Omega_1$, the penultimate inequality follows from the simple bound
$$\Ep[\max_{j\leq p}e^{|z_j|}] \leq p \max_{j\leq p}\Ep[ e^{|z_j|}]
\leq   p \max_{j\leq p} \Ep[e^{z_j} + e^{-z_j}] \leq  2 p
\max_{j\leq p} \Ep[ e^{z_j}]$$ holding for symmetric random variables
$z_j$, and the last inequality follows from the law of iterated expectations and from
$
\Ep[\exp \(  2\lambda t\Gamma  \bG_n^o(x_{ij})/\hat \sigma_j \) |\Omega_1, X]  \leq \exp((2\lambda t\Gamma
)^2/2) $
holding by the Hoeffding inequality (more precisely, by the intermediate step in
the proof of the Hoeffding inequality, see, e.g., p. 100 in \cite{vdV-W}).  Here $\Ep[\cdot|\Omega_1, X]$
denotes the expectation over the symmetrizing Rademacher variables entering the definition
of the symmetrized process $\mathbb{G}_n^o$ .

Step 4. (Bound on $\Ep[e^{\lambda \mC^o(t)}|\Omega_1]$)   We bound
\begin{eqnarray*}
\mC^o(t) & \leq & \sup_{u \in \mathcal{U}, |u-\hat u|\leq \varepsilon, \hat u \in \mathcal{U}_k} \sup_{\|\delta \|_{1,n} \leq 2t\Gamma} \left|\mathbb{G}_n^o [w_i(x_i'(\delta+\beta(u)-\beta(\hat u)),\hat u)]  \right|  \\
&  + & \sup_{u \in \mathcal{U}, |u - \hat u|\leq \varepsilon, \hat u \in \mathcal{U}_k}
 \left|\mathbb{G}_n [w_i ( x_i'(\beta(u) - \beta(\hat u)), \hat u)]  \right|\\
& \leq &   2  \sup_{\hat u \in \mathcal{U}_k, \|\delta\|_{1,n} \leq 4t\Gamma} |\mathbb{G}_n^o [w_i(x_i'\delta,\hat u)] |=:\mathcal{D}^o(t),
\end{eqnarray*}
where the first inequality is elementary, and  the second inequality follows from the inequality
$$\sup_{|u-\hat u|\leq \varepsilon} \|\beta(u)-\beta(\hat u)\|_{1,n}  \leq  \sqrt{2s}L(2\max_{1\leq j\leq p} \sigma_j)\varepsilon \leq \sqrt{2s}L(2\cdot 3/2)\varepsilon \leq 2t\Gamma,$$
holding by our choice (\ref{eq: choice of varepsilon}) of $\varepsilon$ and by event  $\Omega_1$.

Next we bound $\Ep[ e^{\mathcal{D}^o(t)} | \Omega_1]$
\begin{eqnarray*}
\Ep [ e^{\lambda \mathcal{D}^o(t)} |\Omega_1]  &\leq &    (1/\varepsilon) \max_{\hat u \in \mathcal{U}_k}\Ep[\exp(2\lambda  \sup_{\|\delta\|_{1,n} \leq 4t\Gamma} |\mathbb{G}_n^o [w_i(x_i'\delta,\hat u)]|) |\Omega_1]  \\
&\leq &     (1/\varepsilon)  \max_{\hat u \in \mathcal{U}_k} \Ep [\exp(4\lambda  \sup_{\|\delta\|_{1,n} \leq 4t\Gamma} |\mathbb{G}_n^o [x_i'\delta]  |) |\Omega_1]  \\
& \leq &   2 (p/\varepsilon) \max_{j \leq p}\Ep[\exp \(  16\lambda t\Gamma   \bG_n^o(x_{ij})/\hat \sigma_j \) |\Omega_1]
 \leq  2 (p/\varepsilon)\exp(  (16\lambda
t\Gamma )^2/2  ),\end{eqnarray*}
where the first inequality follows from the definition of $w_i$ and by $k \leq 1/\varepsilon$, the second inequality follows from the exponential
moment inequality for contractions (Theorem 4.12 of Ledoux and Talagrand \cite{LedouxTalagrandBook})
and from the contractive property $|w_i(a,\hat u)-w_i(b,\hat u)| \leq |a-b|$, and the last two inequalities follow exactly as in Step 3. \end{proof}

\section{Proof of Lemmas 6-7 (used in Theorem 3)}\label{App:LP}

In order to characterize the sparsity properties of $\hat
\beta(u)$, we will exploit the fact that (\ref{Def:L1QR}) can be
written as the following linear programming problem:
\begin{equation}\label{DefQRLP}
\begin{array}{rlcl}
\displaystyle \min_{\xi^+, \xi^-,\beta^+, \beta^- \in \mathbb{R}^{2n + 2p}_+} &  \displaystyle \En\[ u \xi^+_i + (1-u) \xi^-_i\] + \frac{\lambda\sqrt{u(1-u)}}{n} \sum_{j=1}^p \hat\sigma_j(\beta^+_j + \beta^-_j) \\
& \xi^+_i - \xi^-_i = y_i - x_i'(\beta^+ -\beta^-),  \ \ i = 1,\ldots,n.\\
\end{array}
\end{equation}

Our theoretical analysis of the sparsity of $\hat \beta(u)$ relies
on the dual of (\ref{DefQRLP}):
\begin{equation}\label{Def:DualL1}
\begin{array}{rlcl}
\displaystyle \max_{a \in \mathbb{R}^n}  &\  \En\[ y_ia_i \] \ \\
& \ | \En\[ x_{ij} a_i \] | \leq  \lambda\sqrt{u(1-u)}\hat\sigma_j/n, \ \ j =1,\ldots,p, \\
&  \ (u-1) \leq a_i \leq u,  \ \ i=1,\ldots,n.
\end{array}
\end{equation}  The dual program maximizes the correlation between the response variable and the rank scores subject to the condition
requiring the rank scores to be approximately uncorrelated with the regressors.
The optimal solution $\hat a(u)$ to (\ref{Def:DualL1})  plays a key role in determining the sparsity of $\hat\beta(u)$.

\begin{lemma}[Signs and Interpolation Property]\label{Lemma:E4partI} (1)  For any
$j \in \{1,\ldots,p\}$
\begin{equation}\label{Def:Sign}
\begin{array}{lcl}
\displaystyle \hat \beta_j(u) > 0 \ \ & \mbox{iff} &\displaystyle \ \ \En\[ x_{ij}\hat a_i(u)\] = \ \lambda\sqrt{u(1-u)}\hat\sigma_j/n, \\
\displaystyle\hat \beta_j(u) < 0 \ \ & \mbox{iff} &\displaystyle \ \ \En\[ x_{ij}\hat a_i(u)\] = -\lambda\sqrt{u(1-u)}\hat\sigma_j/n,

\end{array}
\end{equation} (2) $\|\hat\beta(u)\|_0 \leq n \wedge p$ uniformly over $u \in \mathcal{U}$. (3) If $y_1,\ldots,y_n$ are absolutely continuous conditional on  $x_1,\ldots,x_n$, then the  number of interpolated data
points, $I_u=|\{i: y_i = x_i'\hat \beta(u)\}|$, is equal to
$\|\hat\beta(u)\|_0$ with probability one uniformly over
$u\in\mathcal{U}$.
\end{lemma}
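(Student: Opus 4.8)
The plan is to read all three claims off the linear program (\ref{DefQRLP}) and its dual (\ref{Def:DualL1}) together with strong duality, complementary slackness, and the combinatorial structure of basic feasible solutions; the only non-algebraic ingredient will be a short measure-theoretic argument for claim (3). Observe first that (\ref{DefQRLP}) is always feasible (take $\beta^+=\beta^-=0$, $\xi^+_i=(y_i)_+$, $\xi^-_i=(y_i)_-$) and has nonnegative objective, so strong duality holds and both optima are attained; I take $\hat a(u)$ to be the dual optimum associated with the basic optimum behind $\hat\beta(u)$.

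For claim (1), I would first record that at any optimum of (\ref{DefQRLP}) one has $\min\{\beta^+_j,\beta^-_j\}=0$ for every $j$ with $\hat\sigma_j>0$ (which holds for all $j$ on the event of D.3) and $\min\{\xi^+_i,\xi^-_i\}=0$ for every $i$: lowering a strictly positive pair $(\beta^+_j,\beta^-_j)$, resp.\ $(\xi^+_i,\xi^-_i)$, by a common $\varepsilon>0$ leaves $\beta^+-\beta^-$, resp.\ the $i$-th equality constraint, intact while strictly lowering the objective, since the coefficients $\lambda\sqrt{u(1-u)}\hat\sigma_j/n$ and $u/n,(1-u)/n$ are positive. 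Hence $\hat\beta_j(u)=\beta^+_j-\beta^-_j>0$ iff $\beta^+_j>0$, and $<0$ iff $\beta^-_j>0$. Complementary slackness for the primal--dual pair then supplies the rest: the dual constraint tied to the primal variable $\beta^+_j$ is $\En[x_{ij}a_i]\le\lambda\sqrt{u(1-u)}\hat\sigma_j/n$ and the one tied to $\beta^-_j$ is $-\En[x_{ij}a_i]\le\lambda\sqrt{u(1-u)}\hat\sigma_j/n$, so $\beta^+_j>0$ forces $\En[x_{ij}\hat a_i(u)]=\lambda\sqrt{u(1-u)}\hat\sigma_j/n$ and $\beta^-_j>0$ forces $\En[x_{ij}\hat a_i(u)]=-\lambda\sqrt{u(1-u)}\hat\sigma_j/n$; together with the previous sentence (and the mutual exclusivity of the two right-hand values when $\hat\sigma_j>0$) this yields (\ref{Def:Sign}).

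For claim (2), I would use that (\ref{DefQRLP}) is a standard-form LP whose $n$ equality constraints are linearly independent — constraint $i$ is the only one containing $\xi^+_i$ and $\xi^-_i$ — so any basic feasible solution, and $\hat\beta(u)$ is by construction a subvector of one, has at most $n$ nonzero coordinates among the $2n+2p$ variables $(\xi^+,\xi^-,\beta^+,\beta^-)$. Since each nonzero $\hat\beta_j(u)$ forces exactly one of $\beta^+_j,\beta^-_j$ to be nonzero, $\|\hat\beta(u)\|_0$ cannot exceed the number of nonzero $\beta^{\pm}$-coordinates, hence cannot exceed $n$; with the trivial bound by $p$ this gives $\|\hat\beta(u)\|_0\le n\wedge p$, and since nothing in the argument depends on $u$ the bound holds uniformly over $\mathcal U$.

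For claim (3), set $h_u=\|\hat\beta(u)\|_0$ and $T_u=\supp(\hat\beta(u))$. Any $i$ with $y_i\neq x_i'\hat\beta(u)$ has a nonzero residual, hence forces $\xi^+_i>0$ or $\xi^-_i>0$; since at most $n-h_u$ of the $\xi^{\pm}$-coordinates are nonzero at the basic solution, at most $n-h_u$ indices violate the interpolation equality, i.e.\ $I_u\ge h_u$, with no probabilistic caveat. For the reverse inequality, suppose $I_u\ge h_u+1$ and pick $S\subseteq\{i:y_i=x_i'\hat\beta(u)\}$ with $|S|=h_u+1$; then $(y_i)_{i\in S}$ lies in the range of the $|S|\times|T_u|$ matrix $[x_{ij}]_{i\in S,\,j\in T_u}$, a subspace of $\RR^{|S|}$ of dimension at most $h_u<|S|$, hence Lebesgue-null. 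A union bound over the finitely many pairs $(T,S)$ with $T\subseteq\{1,\dots,p\}$, $|T|\le n\wedge p$, $S\subseteq\{1,\dots,n\}$, $|S|=|T|+1$, together with the absolute continuity of the conditional law of $(y_1,\dots,y_n)$ given $(x_1,\dots,x_n)$, shows that the event ``some such pair has $(y_i)_{i\in S}$ in the column span of $[x_{ij}]_{i\in S,\,j\in T}$'' has probability zero; off this event $I_u\le h_u$ for every $u\in\mathcal U$ simultaneously, so $I_u=\|\hat\beta(u)\|_0$ with probability one, uniformly over $\mathcal U$. I expect the only real obstacle to be this last step: $T_u$, $S$ and $I_u$ are solution-dependent and hence random, so they cannot be fixed before inspecting the data; the fix is precisely to quantify over all a priori admissible index sets — of which there are finitely many — and only then invoke absolute continuity, which is what lets the conclusion survive the uniformity over $\mathcal U$.
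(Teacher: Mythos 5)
Your proof is correct. Parts (1) and (2) follow the paper's own route essentially verbatim: complementary slackness for the primal--dual pair (\ref{DefQRLP})--(\ref{Def:DualL1}) gives the sign characterization, and the fact that the $n$ equality constraints of the standard-form LP are linearly independent bounds the number of nonzero coordinates of any optimal basic feasible solution by $n$; your deterministic derivation of $I_u \geq \|\hat\beta(u)\|_0$ from the count of nonzero $\xi^{\pm}$-coordinates is also exactly the paper's. Where you genuinely diverge is the reverse inequality in part (3). The paper proves it through LP degeneracy theory: it shows that, conditional on $X$, the dual polytope has finitely many edge directions, so absolute continuity of $Y$ makes the dual optimum almost surely unique uniformly in $u$, whence the primal basic solution is non-degenerate and has exactly $n$ nonzero components, forcing $\|\hat\beta(u)\|_0 + (n - I_u) = n$. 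You instead argue directly on the interpolation equations: if $I_u > \|\hat\beta(u)\|_0$, then $(y_i)_{i\in S}$ for some $S$ with $|S| = |T_u|+1$ lies in the column span of $[x_{ij}]_{i \in S, j \in T_u}$, a proper (hence Lebesgue-null) subspace determined by $X$ alone, and a union bound over the finitely many a priori admissible pairs $(T,S)$ kills the event and delivers uniformity over $\mathcal{U}$ — correctly handling the fact that $T_u$ and $S$ are data-dependent. Your route is more elementary and self-contained (no appeal to non-degeneracy or to the correspondence between dual uniqueness and primal non-degeneracy), while the paper's route yields the additional byproduct that the dual rank scores $\hat a(u)$ are almost surely uniquely determined, which is convenient since they reappear in the sparsity analysis. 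One cosmetic remark: like the paper, you only really justify the forward implications in (\ref{Def:Sign}) (positive variable $\Rightarrow$ tight constraint); that is all that is used downstream, so this is not a defect relative to the paper's own treatment.
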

\begin{proof}[Proof of Lemma \ref{Lemma:E4partI}]  Step 1. Part (1) follows from the complementary slackness condition for linear programming problems,
see Theorem 4.5 of \cite{BertsimasTsitsiklis}.

Step 2.   To show part (2) consider any $u \in \mathcal{U}$. Trivially we have
$\|\hat\beta(u)\|_0 \leq p$.  Let $Y = (y_1,\ldots,y_n)'$, $\hat
\sigma= (\hat \sigma_1,\ldots,\hat\sigma_p)'$, $X$
be the $n\times p$ matrix with rows $x_i', i=1,\ldots, n$,  $c_u =
( ue', (1-u)e', \lambda\sqrt{u(1-u)} \hat\sigma', \lambda\sqrt{u(1-u)} \hat\sigma')'$,
and  $A = [ I  \  -I \ \ X \ -X ]$, where   $e = (1,1,\ldots,1)'$
denotes an n-vectors of ones, and $I$ denotes
the $n\times n$ identity matrix.  For $w=(\xi^+,\xi^-,\beta^+,\beta^-)$, the primal
problem (\ref{DefQRLP}) can be written as $\min_{w}\{ c_u'w \ : \ Aw=Y, w\geq
0\}$. Matrix  $A$ has rank $n$, since it has linearly independent rows. By Theorem 2.4 of
\cite{BertsimasTsitsiklis} there is at least one optimal basic
solution $\hat w(u)=(\hat \xi^+(u),\hat \xi^-(u),\hat\beta^+(u),\hat\beta^-(u))$, and all basic solutions have at most $n$ non-zero components. Since $\hat \beta(u)=\hat \beta^+(u) - \hat \beta^-(u)$, $\hat \beta(u)$ has at most $n$ non-zero components.

Let $I_u$ denote the number of interpolated points in (\ref{Def:L1QR}) at the quantile index $u$. We have that $n-I_u$
components of $\hat \xi^+(u)$ and $\hat\xi^-(u)$ are non-zero. Therefore,  $\|\hat \beta(u)\|_0 + (n-I_u) \leq n$, which leads to $\|\hat \beta(u)\|_0 \leq I_u$.  By step 3 below this holds with equality with probability 1 uniformly over $u\in \mathcal{U}$, thus establishing part (3).

Step 3. Consider the dual problem $\max_{a}\{ Y'a : A'a \leq c_u\}$ for all $u \in \mathcal{U}$. Conditional on $X$ the feasible region of this problem is the polytope  $R_u=\{ a : A'a \leq c_u\}$. Since $c_u>0$,  $R_u$ is non-empty for all $u\in \mathcal{U}$. Moreover, the form of $A'$ implies that $R_u \subset [-1,1]^n$ so $R_u$ is bounded. Therefore, if
the solution of the dual is not unique for some $u \in
\mathcal{U}$ there exist vertices $a^1, a^2$ connected by an edge of $R_u$ such that
$Y'(a^1-a^2) = 0$. Note that the matrix $A'$ is the same for all
$u\in \mathcal{U}$ so that the direction
$\frac{a^1-a^2}{\|a^1-a^2\|}$ of the edge linking $a^1$ and $a^2$ is generated by a finite number of intersections of hyperplanes associated with the rows of $A'$. Thus, the event $Y'(a^1-a^2) =
0$ is a zero probability event uniformly in $u\in\mathcal{U}$ since $Y$ is absolutely continuous conditional on $X$ and the number of different edge directions is finite. Therefore the dual problem has a unique solution with
probability one uniformly in $u\in \mathcal{U}$. If the dual basic solution is unique, we have
that the primal basic solution is non-degenerate, that is, the
number of non-zero variables equals $n$, see
\cite{BertsimasTsitsiklis}.  Therefore, with probability
one $ \|\hat \beta(u)\|_0 + (n-I_u) = n$, or  $\|\hat
\beta(u)\|_0 = I_u$ for all $u \in \mathcal{U}$.  \end{proof}

\begin{proof}[Proof of Lemma \ref{Lemma:E4}] (Empirical Pre-Sparsity)
That $\hs \leq n \wedge p$ follows from Lemma \ref{Lemma:E4partI}.
We proceed to show the last bound.

Let $\hat a(u)$ be the solution of the dual problem
(\ref{Def:DualL1}), $\widehat T_u = \text{ support} (\hat
\beta(u))$, and $\hs_u = \|\hat \beta(u)\|_0 = |\widehat T_u|$.
For any $j \in \widehat T_u$, from (\ref{Def:Sign}) we have
$(X'\hat a(u))_j =
\sign(\hat\beta_j(u))\lambda\hat\sigma_j\sqrt{u(1-u)}  $ and, for
$j \notin \widehat T_u$ we have $\sign(\hat\beta_j(u)) = 0$.
Therefore, by the Cauchy-Schwarz inequality, and by D.3, with
probability $1-\gamma$ we have
$$\begin{array}{rcl}
 \hs_u\lambda  & = & \sign(\hat\beta(u))'\sign(\hat\beta(u))\lambda
  \leq  \sign(\hat\beta(u))'(X'\hat a(u)) / \min_{j=1,\ldots,p}\hat\sigma_j \sqrt{u(1-u)}\\
 & \leq & 2 \|X\sign(\hat\beta(u))\| \|\hat a(u)\|/\sqrt{u(1-u)}  \leq   2\sqrt{n\phi( \hs_u )} \|\sign(\hat\beta(u))\| \|\hat a(u) \|/\sqrt{u(1-u)},\end{array}
 $$ where we used that $\|\sign(\hat\beta(u))\|_0 = \hs_u$ and $\min_{1\leq j\leq p}\hat\sigma_j \geq 1/2$ with probability $1-\gamma$.
Since $\|\hat a(u) \| \leq \sqrt{n}$,
and  $\|\sign(\hat\beta(u))\| = \sqrt{\hs_u}$ we have $
\hs_u\lambda \leq 2n \sqrt{\hs_u\phi(\hs_u)}\W$. Taking the
supremum over $u \in \mathcal{U}$ on both sides yields the first
result.

To establish the second result, note that
$\widehat s$  $\leq$ $\bar m$ $=$ $\max\left\{ m : m \leq n \wedge p \wedge
4n^2 \phi(m)\W^2/\lambda^2\right\}$. Suppose that  $\bar m > \nn = n/\log(n \vee p)$,
so that $\bar m = m_0 \ell$ for some $\ell > 1$, since $\bar{m} \leq n$ is finite. By
definition, $\bar m$ satisfies  $\bar m \leq 4n^2 \phi(\bar
m)\W^2/\lambda^2.$ Insert the lower bound on
$\lambda$, $\nn$, and  $\bar m = \nn \ell$ in this inequality, and using Lemma
\ref{Lemma:SparseEigenvalue} we obtain:
$$
\bar{m} = \nn\ell  \leq  \frac{4n^2\W^2}{8 \W^2 n \log(n \vee
p)}\frac{ \phi(\nn\ell)}{\phi(\nn)} \leq \frac{n}{2 \log(n \vee
p)} \ceil{\ell} <\frac{n}{\log(n \vee p)} \ell  =  \nn\ell,
$$
which is a contradiction.\end{proof}

\begin{proof}[Proof of Lemma \ref{Lemma:E5}] (Empirical Sparsity) It is convenient to define:
\begin{itemize}
\item[1.] the true rank scores, $a^*_i(u) = u - 1\{y_i \leq
x_i'\beta(u)\} $ for $i=1,\ldots,n$; \item[2.] the estimated rank
scores,
 $ a_i(u) = u - 1\{y_i \leq x_i'\hat \beta(u)\}
$ for $i=1,\ldots,n$; \item[3.] the dual optimal rank scores, $
\hat a(u) $, that solve the dual program (\ref{Def:DualL1}).
\end{itemize}

Let $\widehat T_u$ denote the support of $\hat \beta(u)$, and
$\hs_u = \|\hat \beta(u)\|_0$. Let $\tilde x_{i \widehat
T_u}=(x_{ij}/\hat\sigma_j, j \in \widehat T_u)'$, and
$\hat\beta_{\widehat T_u}(u)=(\hat \beta_j(u), j \in \widehat
T_u)'$. From the complementary slackness characterizations
(\ref{Def:Sign})
\begin{equation}\label{Eq:sqrt-m}
 \sqrt{\hs_u} = \| {\sign}(\hat\beta_{\widehat
T_u}(u)) \| = \left\| \frac{ n \En
\[ \tilde x_{i\widehat T_u}\hat a_i(u)\]}{\lambda\sqrt{u(1-u)}}\right\|.
\end{equation} Therefore we can bound the number $\hs_u$ of non-zero components of $\hat\beta(u)$ provided we can bound the empirical expectation in (\ref{Eq:sqrt-m}). This is achieved in the next step by combining the maximal inequalities and assumptions on the design matrix.

Using the triangle inequality in (\ref{Eq:sqrt-m}), write
{\small $$
\lambda \sqrt{\hs}  \leq \sup_{u\in \mathcal{U}}\left\{\frac{\left\|
 n \En \[\tilde x_{i\widehat T_u}(\hat a_i(u)-a_i(u))\] \right\| + \left\| n
\En\[ \tilde x_{i\widehat T_u}(a_i(u)-a_i^*(u))\] \right\| +
\left\|  n \En\[ \tilde x_{i\widehat T_u}a^*_i(u)\] \right\|}{\sqrt{u(1-u)}}\right\}.
$$} This leads to the inequality
{\small $$
\begin{array}{rc}
\displaystyle \lambda
\sqrt{\hs}  & \leq  \displaystyle \frac{\W}{\displaystyle\min_{j=1,\ldots,p}\hat\sigma_j}\left(\sup_{u \in \mathcal{U}}\left\|
 n \En \[ x_{i\widehat T_u}(\hat a_i(u)-a_i(u))\] \right\| + \sup_{u \in \mathcal{U}} \left\| n
\En\[ x_{i\widehat T_u}(a_i(u)-a_i^*(u))\] \right\|
\right)+\\
 &  \ \ \displaystyle +  \sup_{u\in\mathcal{U}} \left\|  n \En\[  \tilde x_{i\widehat T_u}a^*_i(u)/\sqrt{u(1-u)}\]
\right\|.\\
\end{array}
$$}
Then we bound each of the three components in this display.

(a) To bound the first term, we observe that $\hat a_i(u) \neq
a_i(u)$ only if $y_i = x_i'\hat\beta(u)$. By Lemma
\ref{Lemma:E4partI} the penalized quantile regression fit can
interpolate at most $\hs_u \leq \hs$ points with probability one uniformly
over $u\in \mathcal{U}$. This implies that $\En\[  |\hat a_i(u) -
a_i(u)|^2\] \leq \hs/n$. Therefore,
$$
\begin{array}{rcl}
& & \displaystyle \sup_{u \in \mathcal{U}} \left\| n \En\[  x_{i\widehat T_u}(\hat
a_i(u) -
a_i(u))\] \right\|  \leq  \displaystyle n \sup_{\|\alpha\|_0\leq \hs, \|\alpha\|\leq 1} \sup_{u \in \mathcal{U}} \ \En\[ | \alpha'  x_i| \ |\hat a_i(u) - a_i(u)|  \] \\
& &  \leq \displaystyle    n  \sup_{\|\alpha\|_0\leq \hs, \|\alpha\|\leq 1} \sqrt{\En\[ | \alpha'x_i|^2\]} \sup_{u \in \mathcal{U}}\sqrt{\En\[  |\hat a_i(u) - a_i(u)|^2\]}  \leq  \sqrt{n \phi(\hs) \hs}.
\end{array}
$$

(b) To bound the second term, note that
$$
\begin{array}{rl}
& \displaystyle \sup_{u \in \mathcal{U}} \left\| n \En
\[x_{i\widehat T_u}(a_i(u)-a_i^*(u))\] \right\| \\
&  \leq
\displaystyle \sup_{u \in \mathcal{U}}\left\|  \sqrt{n} \ \Gn\(
x_{i\widehat T_u}(a_i(u)-a_i^*(u))\) \right\| + \sup_{u \in \mathcal{U}} \left\| n \Ep\[
x_{i\widehat T_u}(a_i(u)-a_i^*(u))\]
\right\| \\
 & \leq  \sqrt{n} \epsilon_1(r,\hs)
+\sqrt{n} \epsilon_2(r,\hs).
\end{array}
$$
where for $\psi_i(\beta,u) = (1\{y_i\leq x_i'\beta\}-u)x_i$,
\begin{equation}\label{define errors}
\begin{array}{rll}
\displaystyle \epsilon_1(r,m)& := &\displaystyle   \sup_{ u \in
\mathcal{U}, \beta \in R_u(r,m), \alpha \in \mathbb{S}(\beta)}
\displaystyle  | \Gn (\alpha'\psi_i(\beta,
u)) - \Gn (\alpha'\psi_i(\beta(u),u))|, \\
\displaystyle \epsilon_2(r,m)& := & \displaystyle  \sup_{ u \in
\mathcal{U},  \beta \in R_u(r,m), \alpha \in \mathbb{S}(\beta)}
\displaystyle  \sqrt{n } | \Ep [\alpha'\psi_i(\beta, u)] -
\Ep[\alpha'\psi_i(\beta(u),u)]|,  \text{ and }
\end{array}
 \end{equation}
\begin{equation}\label{define sets R and S}
\begin{array}{rll}
R_u(r,m) & := & \{ \beta \in \RR^p :  \ \beta - \beta(u) \in A_u \ : \|\beta\|_0 \leq m, \ \| J_u^{1/2}( \beta - \beta(u) ) \| \leq r \ \}, \\
\mathbb{S}(\beta) &:= & \{ \alpha \in \RR^{p} : \|\alpha\| \leq 1, \supp (\alpha) \subseteq \supp(\beta) \}.
\end{array}
 \end{equation}
By Lemma \ref{Lemma:error-rate-1} there is a constant $A_{\varepsilon/2}^1$ such that $\sqrt{n}\epsilon_1(r,
\hs)\leq  A_{\varepsilon/2}^1\sqrt{n \hs \log(n\vee p) }\sqrt{\phi(\hs)}$ with probability $1-\varepsilon/2$. By Lemma \ref{Lemma:error-rate-2} we have $\sqrt{n} \epsilon_2(r,\hs)\leq n (\mmu(\hs)/2)  (r \wedge 1 )$.

(c) To bound the last term, by Theorem \ref{Thm:BoundLAMBDA} there exists a constant $A_{\varepsilon/2}^0$ such that with probability $1-\varepsilon/2$
$$
\sup_{u \in \mathcal{U}}\left\| n\En\[ \tilde x_{i\widehat
T_u}a^*_i(u) /\sqrt{u(1-u)}\] \right\| \leq \sqrt{\hs}\Lambda \leq
\sqrt{\hs} \  A_{\varepsilon/2}^0 \ \W\sqrt{n \log p },
$$
where we used that $a_i^*(u) = u - 1\{ u_i \leq u\}$, $i=1,\ldots,n,$ for $u_1,\ldots,u_n$ i.i.d. uniform $(0,1)$.

Combining bounds in (a)-(c), using that
$\min_{j=1,\ldots,p}\hat\sigma_j \geq 1/2$ by condition D.3 with probability $1-\gamma$, we have
$$ \frac{\sqrt{\hs}}{\W}  \leq  \mmu(\hs) \ \frac{n}{\lambda}  (r \wedge 1)  +
\sqrt{\hs}  K_\varepsilon \frac{\sqrt{n\log (n\vee p)
\phi(\hs)}}{\lambda}, $$
with probability at least $1-\varepsilon - \gamma$, for $K_\varepsilon = 2(1 + A_{\varepsilon/2}^0 +
A_{\varepsilon/2}^1)$.
\end{proof}

Next we control the linearization error $\epsilon_2$ defined in (\ref{define errors}).

\begin{lemma}[Controlling linearization error $\epsilon_2$]\label{Lemma:error-rate-2} Under D.1-2
$$
\epsilon_2(r,m) \leq \sqrt{n} \sqrt{\semax{m}}  \left\{ 1 \wedge
 \( 2 [\bar{f}/\underf^{1/2}]  r \) \ \right\}  \text{ for all $r>0$ and $m \leq n$. }$$
\end{lemma}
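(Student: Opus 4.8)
The plan is to reduce the claimed bound to a pointwise estimate and then take a supremum. Fix $u \in \mathcal{U}$, $\beta \in R_u(r,m)$, and $\alpha \in \mathbb{S}(\beta)$, and set $\delta := \beta - \beta(u)$. Since $\psi_i(\beta,u) = (1\{y_i \leq x_i'\beta\} - u)x_i$, the quantity of interest is $|\Ep[\alpha'x_i(1\{y_i \leq x_i'\beta\} - 1\{y_i \leq x_i'\beta(u)\})]|$; conditioning on $x_i$ and writing $F_{y|x}$ for the conditional law of $y$ given $x$, this equals $|\Ep[\alpha'x_i(F_{y|x}(x_i'\beta\,|\,x_i) - F_{y|x}(x_i'\beta(u)\,|\,x_i))]|$. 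I would bound the inner CDF increment in two ways and take the minimum, which is what produces the $1 \wedge (\cdot)$ structure in the statement.

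For the trivial bound, $|1\{y_i \leq x_i'\beta\} - 1\{y_i \leq x_i'\beta(u)\}| \leq 1$, so by Jensen's and the Cauchy--Schwarz inequalities the quantity is at most $\Ep[|\alpha'x_i|] \leq \sqrt{\Ep[(\alpha'x_i)^2]} \leq \sqrt{\varphi(m)}$, using $\|\alpha\| \leq 1$ together with $\|\alpha\|_0 \leq \|\beta\|_0 \leq m$. For the smooth bound, condition D.1 gives $f_{y|x} \leq \bar{f}$ uniformly, hence $|F_{y|x}(x_i'\beta\,|\,x_i) - F_{y|x}(x_i'\beta(u)\,|\,x_i)| \leq \bar{f}\,|x_i'\delta|$, and Cauchy--Schwarz yields the bound $\bar{f}\,\sqrt{\Ep[(\alpha'x_i)^2]}\,\sqrt{\Ep[(x_i'\delta)^2]} \leq \bar{f}\,\sqrt{\varphi(m)}\,\sqrt{\Ep[(x_i'\delta)^2]}$. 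I would then control the last factor via the restricted-set inequality: since $\delta = \beta - \beta(u) \in A_u$ by the definition of $R_u(r,m)$, inequality (\ref{E.2-RE.addon}) of Lemma \ref{Lemma:E2} gives $\Ep[(x_i'\delta)^2] = \|(\Ep[x_ix_i'])^{1/2}\delta\|^2 \leq \|J_u^{1/2}\delta\|^2/\underf \leq r^2/\underf$, so $\sqrt{\Ep[(x_i'\delta)^2]} \leq r/\underf^{1/2}$ and the smooth bound becomes $\bar{f}\,\sqrt{\varphi(m)}\,r/\underf^{1/2} \leq 2[\bar{f}/\underf^{1/2}]\,\sqrt{\varphi(m)}\,r$ (the factor $2$ is slack but harmless).

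Combining, $|\Ep[\alpha'\psi_i(\beta,u)] - \Ep[\alpha'\psi_i(\beta(u),u)]| \leq \sqrt{\varphi(m)}\{1 \wedge 2[\bar{f}/\underf^{1/2}]r\}$; taking the supremum over $u \in \mathcal{U}$, $\beta \in R_u(r,m)$, $\alpha \in \mathbb{S}(\beta)$ and multiplying by $\sqrt{n}$ gives the claim. I do not expect a genuine obstacle here — the argument is short. The only points needing care are (i) verifying that the increment $\delta$ lies in the restricted set $A_u$ so that Lemma \ref{Lemma:E2} is applicable, which is precisely the content of $\beta \in R_u(r,m)$, and (ii) keeping track of the sparsity $\|\alpha\|_0 \leq m$ so that $\Ep[(\alpha'x_i)^2]$ is controlled by $\varphi(m)$ rather than by a full maximal eigenvalue of $\Ep[x_ix_i']$.
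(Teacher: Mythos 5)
Your proof is correct, and it uses the same three ingredients as the paper's argument --- Cauchy--Schwarz, the density bound $\bar f$ from D.1, and the relation $(\Ep[|x_i'\delta|^2])^{1/2}\leq \|J_u^{1/2}\delta\|/\underf^{1/2}$ from Lemma \ref{Lemma:E2} --- but deploys them in a different order. The paper applies Cauchy--Schwarz first, splitting off $\sqrt{\varphi(m)}$ and leaving the second moment of the indicator difference, which it bounds by $\Ep[1\{|y_i-x_i'\beta(u)|\leq |x_i'\delta|\}]\leq \{2\bar f(\Ep[|x_i'\delta|^2])^{1/2}\}\wedge 1$; the two-sided small-ball event is where its factor $2$ arises, and this route leaves the small term under a square root at the end. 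You instead condition on $x_i$ first, turning the indicator difference into a CDF increment bounded by $\bar f\,|x_i'\delta|$, and only then apply Cauchy--Schwarz; this yields the claimed inequality directly (indeed with constant $\bar f$ in place of $2\bar f$, as you note) and avoids the square root entirely. You also correctly verify the two points where the argument could go wrong: that $\delta=\beta-\beta(u)\in A_u$ with $\|J_u^{1/2}\delta\|\leq r$ (which is exactly the content of $\beta\in R_u(r,m)$, so Lemma \ref{Lemma:E2} applies), and that $\|\alpha\|_0\leq\|\beta\|_0\leq m$ so that $\Ep[(\alpha'x_i)^2]$ is controlled by $\varphi(m)$.
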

\begin{proof} By definition
$$ \epsilon_2(r,m) =  \sup_{u \in \mathcal{U}, \beta \in R_u(r,m), \alpha \in \mathbb{S}(\beta)} \sqrt{n}| \Ep[ (\alpha'x_i)\( 1\{y_i \leq x_i'\beta\} - 1\{y_i<x_i'\beta(u)\} \)]|.
$$
Using that $\semax{m} = \sup_{\|\alpha\|\leq 1, \|\alpha\|_0\leq m} \Ep[
|\alpha'x_{i}|^2]$ note that
$$\begin{array}{rl}
| \Ep[ (\alpha'x_i)\( 1\{y_i \leq x_i'\beta\} - 1\{y_i<x_i'\beta(u)\} \)]| & \leq  \Ep[ |\alpha'x_i| \ | 1\{y_i \leq x_i'\beta\} - 1\{y_i<x_i'\beta(u)\} |]  \\
& =_{(1)}  \Ep[ |\alpha'x_i| \Ep[ | 1\{y_i \leq x_i'\beta\} - 1\{y_i<x_i'\beta(u)\} | \big| x] ]    \\
& \leq_{(2)} \Ep[ |\alpha'x_i| \bar f|x_i'\beta-x_i'\beta(u)| ]  \\
& \leq_{(3)} \bar f \sqrt{\Ep[ |\alpha'x_i|^2]} \sqrt{E[|x_i'\beta-x_i'\beta(u)|^2 ]}  \\
& \leq_{(4)} (\bar f/\underf^{1/2}) \sqrt{\semax{m}} r  \\
\end{array}$$
where the equality (1) follows by the law of iterated expectations, (2) follows since $\bar f$ is an upper bound on the conditional density function of $y$, (3) follows by Cauchy-Schwarz, and (4) follows from $\(\Ep[|x_i'(\beta-\beta(u))|^2] \)^{1/2} \leq  \|J_u^{1/2}(\beta-\beta(u))\|/\underf^{1/2}$ by  Lemma \ref{Lemma:E2}.

On the other hand, directly by  Cauchy-Schwarz, we have another bound
$$ \epsilon_2(r,m) \leq \sqrt{n} \sqrt{\semax{m}} \sup_{u \in \mathcal{U}, \beta \in R_u(r,m) } \sqrt{\Ep[\(1\{y_i \leq x_i'\beta\} - 1\{y_i<x_i'\beta(u)\} \)^2 ]} \leq \sqrt{n} \sqrt{\semax{m}}.
$$ since $\sqrt{\Ep[\(1\{y_i \leq x_i'\beta\} - 1\{y_i<x_i'\beta(u)\} \)^2 ]} \leq 1$.

The result follows.
\end{proof}

 Next we proceed to control the empirical error $\epsilon_1$
defined in (\ref{define errors}).  We shall need the following preliminary result on the uniform $L_2$ covering numbers (\cite{vdV-W})
of a relevant function class.

\begin{lemma}\label{Lemma:UEI}
(1) Consider a fixed subset $T \subset \{1,2,\ldots,p\}$, $|T|=m$. The
class of functions
$$
\mathcal{F}_T = \left\{ \alpha'( \psi_i(\beta,u) -
\psi_i(\beta(u),u)) \  : u \in \mathcal{U}, \alpha \in
\mathbb{S}(\beta), \supp(\beta) \subseteq T  \right\}
$$
 has a VC index bounded by $cm$ for some universal constant $c$. (2) There are universal constants
 $C$ and $c$ such that for any $m\leq n$  the function class
$$\mathcal{F}_{m} = \{
\alpha'(\psi_i(\beta,u)-\psi_i(\beta(u),u)) :  u \in \mathcal{U}, \beta \in \RR^p,
\|\beta\|_0\leq m, \alpha \in \mathbb{S}(\beta) \} \ $$
has the the uniform covering numbers bounded as
$$
\sup_{Q} N(\epsilon\|F_{m}\|_{Q,2}, \mathcal{F}_{m}, L_2(Q))
\leq C\( \frac{16e}{\epsilon}\)^{2(cm-1)}\(\frac{ep}{m}\)^m, \ \ \epsilon>0.$$
\end{lemma}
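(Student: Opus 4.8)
The plan is to deduce part~(1) from the closure properties of VC-subgraph classes, by writing each member of $\mathcal{F}_T$ as a product of functions drawn from a short list of elementary VC-subgraph classes, and then to deduce part~(2) from part~(1) by combining the uniform-entropy bound for VC-subgraph classes (Theorem~2.6.7 in \cite{vdV-W}) with a union bound over the $\binom{p}{m}$ possible active sets $T$ of cardinality $m$.

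For part~(1), note first that $\psi_i(\beta,u)-\psi_i(\beta(u),u)=(1\{y_i\le x_i'\beta\}-1\{y_i\le x_i'\beta(u)\})x_i$, so every member of $\mathcal{F}_T$ has the form $g_{\alpha,\beta,u}(z)=(\alpha'x)(1\{y\le x'\beta\}-1\{y\le x'\beta(u)\})$, $z=(x,y)$, with $\supp(\alpha)\cup\supp(\beta)\subseteq T$, $\|\alpha\|\le1$, $u\in\mathcal{U}$ (we may even relax $\alpha\in\mathbb{S}(\beta)$ to $\supp(\alpha)\subseteq T$, which only enlarges the class). I would then bound the VC index of three elementary ingredients: the linear functions $\{z\mapsto\alpha'x:\supp(\alpha)\subseteq T\}$ span an $m$-dimensional vector space and hence form a VC-subgraph class of index at most $m+2$ (Lemma~2.6.15 in \cite{vdV-W}); the indicators $\{z\mapsto 1\{y\le x'\beta\}:\supp(\beta)\subseteq T\}$ are indicators of homogeneous half-spaces in $\RR^{m+1}$ after the map $z\mapsto((x_j)_{j\in T},y)$, hence of VC index at most $m+2$; and --- this is the crucial point, and the one I expect to require the most care --- the indicators $\{z\mapsto 1\{y\le x'\beta(u)\}:u\in\mathcal{U}\}$ correspond to sets that are \emph{linearly ordered by inclusion}, because $u\mapsto x'\beta(u)=Q_{y|x}(u)$ is a conditional quantile function and therefore nondecreasing in $u$ for every $x$ in the support of $x_i$; a family of sets linearly ordered by inclusion has VC index at most $2$, irrespective of the ambient dimension, and this is exactly what lets the (a priori $p$-dimensional, not $T$-supported) vector $\beta(u)$ enter without any dependence on $p$ creeping into the index of $\mathcal{F}_T$. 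Since the subgraph of $g_{\alpha,\beta,u}$ is a Boolean combination, using a fixed number of set operations, of $\{y\le x'\beta\}$, $\{y\le x'\beta(u)\}$, $\{(z,s):s<\alpha'x\}$, $\{(z,s):s<-\alpha'x\}$ and $\{(z,s):s<0\}$ --- the last three being half-spaces in an $(m+2)$-dimensional coordinate subspace of $\mathcal{Z}\times\RR$ --- and VC classes are stable under finite Boolean operations with at most a universal multiplicative loss in the index (\cite{vdV-W}), the class $\mathcal{F}_T$ is VC-subgraph of index at most $cm$ for a universal constant $c$.

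For part~(2), write $\mathcal{F}_m=\bigcup_{|T|=m}\mathcal{F}_T$, a union of at most $\binom{p}{m}\le(ep/m)^m$ classes, and use the common envelope $F_m(z):=\max_{|T|=m}\|(x_j)_{j\in T}\|$, which dominates $|g_{\alpha,\beta,u}|$ because $|\alpha'x|\le\|(x_j)_{j\in T}\|$ when $\|\alpha\|\le1$ and $\supp(\alpha)\subseteq T$. Applying Theorem~2.6.7 in \cite{vdV-W} to each $\mathcal{F}_T$ (index $\le cm$, envelope dominated by $F_m$) and using $\|F_T\|_{Q,2}\le\|F_m\|_{Q,2}$ together with the monotonicity of covering numbers in the radius gives $\sup_Q N(\epsilon\|F_m\|_{Q,2},\mathcal{F}_T,L_2(Q))\le C_0(16e/\epsilon)^{2(cm-1)}$ for a universal constant $C_0$ and $0<\epsilon<1$ (the polynomial-in-$m$ prefactor and the surplus $(16e)$-power are absorbed into $C_0$, using $(16e)^{cm}\epsilon^{-2(cm-1)}\le(16e/\epsilon)^{2(cm-1)}$ for $cm\ge2$; for $\epsilon\ge1$ the statement is trivial). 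Summing these covering numbers over the $\binom{p}{m}$ subsets $T$ yields $\sup_Q N(\epsilon\|F_m\|_{Q,2},\mathcal{F}_m,L_2(Q))\le C_0(16e/\epsilon)^{2(cm-1)}(ep/m)^m$, which is the claimed bound, the two universal constants in the statement being $C_0$ and $c$. Apart from the nested-family observation highlighted above, the remaining steps are routine VC bookkeeping, so that is where I would expect the only genuine subtlety to lie.
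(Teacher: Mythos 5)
Your proposal is correct and follows essentially the same route as the paper's: the paper also decomposes each subgraph into a Boolean combination of the linear class $\{x'\alpha:\supp(\alpha)\subseteq T\}$, the half-space indicators $\{1\{y\le x'\beta\}:\supp(\beta)\subseteq T\}$ (each of VC index $O(m)$), and the family $\{1\{y\le x'\beta(u)\}:u\in\mathcal{U}\}$, whose small VC index it likewise attributes to the monotonicity of $u\mapsto x'\beta(u)$, before invoking Lemma 2.6.17 of \cite{vdV-W}. Part (2) is also handled identically, via Theorem 2.6.7 applied to each restriction $\mathcal{F}_T$ and a count of the $\binom{p}{m}\le(ep/m)^m$ possible supports.
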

\begin{proof} The proof involves standard combinatorial arguments
and is relegated to the Supplementary Material Appendix \ref{Supplementary Material}. \comment{The proof of part (1) follows by showing that the corresponding
subgraph class is created by at most $K$ operations
of taking unions, intersections, and complements of VC classes of sets
with VC index at most $m$, and then appealing to \cite{vdV-W} Lemma 2.6.17.
We relegate the details to the Supplementary Material Appendix \ref{Supplementary Material} for brevity.

To show part (2) let $\mathcal{F}_T$ denote a restriction of $\mathcal{F}_{m}$
for a particular choice of $m$ non-zero components. Part (1) implies $ N(\epsilon\|F_T\|_{Q,2}, \mathcal{F}_T,L_2(Q))
\leq C(cm)(16e)^{cm}( 1/\epsilon)^{2(cm-1)},$ where $C$
is a universal constant (see \cite{vdV-W} Theorem 2.6.7). Since we have at most $\binom{p}{m}
\leq (ep/m)^m $  different restrictions $T$, the total covering
number is bounded according the statement of the lemma.}
\end{proof}

\begin{lemma}[Controlling empirical error $\epsilon_1$]\label{Lemma:error-rate-1}
Under D.1-2 there exists a universal constant $A$ such that with
probability $1-\delta$ $$
 \epsilon_1(r,m) \leq A \delta^{-1/2} \sqrt{m \log(n\vee p)}\sqrt{ \phi(m)} \ \ \text{ uniformly for all } r>0 \text{ and } m \leq n.
$$\end{lemma}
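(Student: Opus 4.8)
The plan is to bound $\epsilon_1(r,m)$ by a supremum of an empirical process indexed by the function class $\mathcal{F}_m$ from Lemma \ref{Lemma:UEI}, and then apply a maximal inequality driven by the uniform covering number bound already established there. First I would observe that, by the definitions in (\ref{define errors}) and (\ref{define sets R and S}), $\epsilon_1(r,m) \leq \sup_{f \in \mathcal{F}_m} |\Gn(f)|$, since the supremum over $\beta \in R_u(r,m)$, $\alpha \in \mathbb{S}(\beta)$, $u \in \mathcal{U}$ that defines $\epsilon_1$ ranges over a subfamily of the difference functions $\alpha'(\psi_i(\beta,u) - \psi_i(\beta(u),u))$ with $\|\beta\|_0 \leq m$; note that the constraints $\beta - \beta(u) \in A_u$ and $\|J_u^{1/2}(\beta-\beta(u))\| \leq r$ only shrink the index set, so dropping them is harmless for an upper bound. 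The envelope of $\mathcal{F}_m$ can be taken to be $F_m(x_i) = 2 \sup_{\|\alpha\|\le 1,\|\alpha\|_0\le m}|\alpha'x_i|$ (each $\psi$ term contributes an indicator times $\alpha'x_i$), so $\|F_m\|_{\Pn,2}^2 \leq 4\phi(m)$ by the definition (\ref{Def:EigSparse}) of $\phi$.

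Next I would invoke a standard maximal inequality for empirical processes in terms of uniform entropy — e.g. the bound $\Ep\sup_{f\in\mathcal{F}}|\Gn(f)| \lesssim J(1,\mathcal{F})\,\|F\|_{\Pn,2}$ where $J(1,\mathcal{F}) = \int_0^1 \sqrt{1+\log \sup_Q N(\eps\|F\|_{Q,2},\mathcal{F},L_2(Q))}\,d\eps$, as in \cite{vdV-W}. Plugging in the covering number bound from Lemma \ref{Lemma:UEI}(2), namely $\sup_Q N(\eps\|F_m\|_{Q,2},\mathcal{F}_m,L_2(Q)) \leq C(16e/\eps)^{2(cm-1)}(ep/m)^m$, we get $\log$ of this is $\lesssim m\log(1/\eps) + m\log(ep/m) \lesssim m\log(n \vee p)$ plus an $\eps$-dependent piece whose square root integrates to a constant. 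Hence $J(1,\mathcal{F}_m) \lesssim \sqrt{m\log(n\vee p)}$, and therefore $\Ep\sup_{f\in\mathcal{F}_m}|\Gn(f)| \lesssim \sqrt{m\log(n\vee p)}\sqrt{\phi(m)}$. The claimed in-probability bound with an explicit $\delta^{-1/2}$ factor then follows from Markov's inequality, $P(\epsilon_1(r,m) > A\delta^{-1/2}\sqrt{m\log(n\vee p)}\sqrt{\phi(m)}) \leq \delta$, where I absorb the universal constants into $A$; uniformity over all $r>0$ and $m\le n$ is immediate since the bound over $\mathcal{F}_m$ does not depend on $r$ and the class is monotone in $m$ (or, if one prefers a bound simultaneously over all $m$, one pays only a further logarithmic factor in $m$, which is absorbed since $m\le n$; but as stated the bound is for each fixed $m$, so no union is needed).

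The main obstacle — really the only delicate point — is making sure the maximal inequality is applied with the correct normalization: the class $\mathcal{F}_m$ is not uniformly bounded by a constant but by the data-dependent envelope $F_m$, and one must use a maximal inequality stated in terms of $\|F_m\|_{\Pn,2}$ (the $L_2(\Pn)$ norm of the envelope) rather than a uniform sup-norm bound, so that the factor $\sqrt{\phi(m)}$ appears rather than something larger. One also has to be slightly careful that the $\eps$-near zero behavior of $\log N(\eps,\cdot)$ is only $\log(1/\eps)^{\Theta(m)}$, whose square root is integrable near $0$; this is exactly why the class having finite VC index (Lemma \ref{Lemma:UEI}(1)) is used, and it is what keeps $J(1,\mathcal{F}_m)$ of the clean order $\sqrt{m\log(n\vee p)}$. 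Everything else is bookkeeping of universal constants.
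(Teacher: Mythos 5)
Your first step (bounding $\epsilon_1(r,m)$ by $\sup_{f\in\mathcal{F}_{m}}|\Gn(f)|$ and feeding the covering-number bound of Lemma \ref{Lemma:UEI} into a maximal inequality) is exactly the paper's route, but two of your key claims fail. First, the envelope bound $\|F_m\|_{\Pn,2}^2\leq 4\phi(m)$ is false: $\|F_m\|_{\Pn,2}^2=\En[\sup_{\|\alpha\|\leq1,\|\alpha\|_0\leq m}(\alpha'x_i)^2]$ has the supremum \emph{inside} the empirical average, whereas $\phi(m)$ has it \emph{outside}, and the inequality between the two goes the wrong way. For a Gaussian design, $\En[\sup_\alpha(\alpha'x_i)^2]$ is the average of the sum of the $m$ largest squared coordinates, of order $m\log(p/m)$, while $\phi(m)\lesssim 1$; so the non-localized bound $J(1,\mathcal{F}_m)\,\|F_m\|_{\Pn,2}$ is too large by roughly a factor $\sqrt{m\log p}$, which would destroy the sparsity argument downstream. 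To obtain $\sqrt{\phi(m)}$ you must use a \emph{localized} entropy integral, i.e.\ integrate the entropy only up to $\rho=\sup_f\|f\|_{\Pn,2}/\|F_m\|_{\Pn,2}$, and exploit that the \emph{weak} variances satisfy $\sup_f \En[f^2]\vee\Ep[f^2]\leq\phi(m)$ (sup outside), using $|f|\leq|\alpha'x_i|$ pointwise. This is precisely what the paper's Lemmas \ref{expo1}--\ref{master lemma} are built to deliver.

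Second, the uniformity over $m\leq n$ is not cosmetic: the lemma is invoked in the proof of Lemma \ref{Lemma:E5} with $m=\hs$, a random quantity, so the bound must hold simultaneously for all $m\leq n$ on a single event of probability $1-\delta$. Your per-$m$ Markov bound does not union-bound to this: with only a Chebyshev-type tail of order $\delta^{-1/2}$, splitting $\delta$ over the $n$ values of $m$ inflates the threshold by a factor polynomial in $n$ (or in $m$), not by ``a further logarithmic factor.'' Monotonicity of $m\mapsto\mathcal{F}_m$ does not help either, since the target bound $\sqrt{m\log(n\vee p)}\sqrt{\phi(m)}$ must shrink with $m$. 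The paper resolves this with Lemma \ref{master lemma}, which combines a per-class Chebyshev step (with thresholds $\tau_m\propto \delta/(m\log n)$) and the exponential tails produced by the sub-Gaussian chaining bound of Lemma \ref{expo1} together with the data-dependent symmetrization of Lemma \ref{Lemma:SymTau}; the exponential part makes the union over $m$ summable at no cost in the rate. You would need to import that machinery (or an equivalent Talagrand-type concentration step) to close both gaps.
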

\begin{proof}
By definition $ \epsilon_1(r,m) \leq
\sup_{f \in \mathcal{F}_{m}}| \Gn(f)|.$ From Lemma
\ref{Lemma:UEI} the uniform covering number of
$\mathcal{F}_{m}$ is bounded by
$C\(16e/\epsilon\)^{2(cm-1)}(ep/m)^m.$ Using Lemma \ref{master
lemma} with $N=n$ and $\bm = p$ we have that uniformly in $m \leq n$, with
probability at least $1-\delta$
 \begin{equation}\label{e1-eq:1}
 \sup_{f \in \mathcal{F}_{m}} | \Gn (f ) | \leq A \delta^{-1/2}  \sqrt{m \log(n\vee p)}\max\left\{  \sup_{f\in \mathcal{F}_{m}}\Ep[f^2]^{1/2}, \sup_{f\in \mathcal{F}_{m}}\En[f^2]^{1/2} \right\}\end{equation}
By $ | \alpha'\(\psi_i(\beta,u) - \psi_i(\beta(u),u)\) |\leq |\alpha'x_i|$ and definition of $\phi(m)$
\begin{equation}\label{e1-eq:2}
 \En[f^2] \leq \En[|\alpha'x_i|^2] \leq \phi(m) \ \ \mbox{and} \ \  \Ep[ f^2 ] \leq \Ep[|\alpha'x_i|^2] \leq \phi(m). \end{equation}
 Combining (\ref{e1-eq:2}) with
(\ref{e1-eq:1}) we obtain the result.
\end{proof}

(c) The next lemma provides a  bound on maximum $k$-sparse
eigenvalues, which we used in some of the derivations presented
earlier.

\begin{lemma}\label{Lemma:SparseEigenvalue}
Let $M$ be a semi-definite positive matrix and  $ \phi_M(k) = \sup
\{ \ \alpha'M\alpha \ : \alpha \in \RR^p, \|\alpha\|=1, \|\alpha\|_0 \leq k \ \}$. For any
integers $k$ and $\ell k$ with $\ell \geq 1$,  we have $
\phi_M(\ell k) \leq \lceil \ell \rceil \phi_M(k).$ 
\end{lemma}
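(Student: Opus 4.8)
The plan is to reduce the $(\ell k)$-sparse case to the $k$-sparse case by splitting the support of an extremal vector into $\lceil \ell \rceil$ blocks of size at most $k$ and exploiting the fact that $M \succeq 0$ makes $(u,v)\mapsto u'Mv$ a positive semidefinite bilinear form, hence subject to Cauchy--Schwarz.

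Concretely, I would fix an arbitrary $\alpha \in \RR^p$ with $\|\alpha\| = 1$ and $\|\alpha\|_0 \leq \ell k$, set $T = \supp(\alpha)$, and note $|T| \leq \ell k \leq \lceil \ell \rceil k$, so $T$ can be partitioned into disjoint sets $T_1,\dots,T_{\lceil \ell \rceil}$ with $|T_r| \leq k$ for every $r$ (some blocks possibly empty). Writing $\alpha = \sum_{r=1}^{\lceil \ell \rceil} \alpha_{T_r}$ and expanding the quadratic form, Cauchy--Schwarz for the semi-inner product $u'Mv$ gives
$$ \alpha' M \alpha = \sum_{r,s} \alpha_{T_r}' M \alpha_{T_s} \leq \Big( \sum_{r=1}^{\lceil \ell \rceil} \sqrt{\alpha_{T_r}' M \alpha_{T_r}} \Big)^{2}. $$

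Then I would apply the elementary bound $\big(\sum_{r=1}^N b_r\big)^2 \leq N \sum_{r=1}^N b_r^2$ with $N = \lceil \ell \rceil$ and $b_r = \sqrt{\alpha_{T_r}' M \alpha_{T_r}}$, and bound each summand using the definition of $\phi_M(k)$, since $\|\alpha_{T_r}\|_0 \leq |T_r| \leq k$:
$$ \alpha' M \alpha \leq \lceil \ell \rceil \sum_{r=1}^{\lceil \ell \rceil} \alpha_{T_r}' M \alpha_{T_r} \leq \lceil \ell \rceil\, \phi_M(k) \sum_{r=1}^{\lceil \ell \rceil} \|\alpha_{T_r}\|^2 = \lceil \ell \rceil\, \phi_M(k), $$
where the last equality uses that the $T_r$ partition $\supp(\alpha)$, so $\sum_r \|\alpha_{T_r}\|^2 = \|\alpha\|^2 = 1$. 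Taking the supremum over all feasible $\alpha$ delivers $\phi_M(\ell k) \leq \lceil \ell \rceil\, \phi_M(k)$.

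There is no substantive obstacle; the only points to keep straight are that $\lceil \ell \rceil k \geq \ell k \geq |T|$ is exactly what licenses the block decomposition into pieces of size at most $k$, and that the displayed chain of inequalities holds for every admissible $\alpha$, so one need not worry about attainment of the supremum defining $\phi_M(\ell k)$ (though it is in fact attained, the feasible set being a finite union of spheres).
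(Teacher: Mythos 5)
Your proposal is correct and follows essentially the same route as the paper's proof: both partition the support of an $(\ell k)$-sparse vector into $\lceil \ell \rceil$ blocks of size at most $k$ and control the cross terms $\alpha_{T_r}'M\alpha_{T_s}$ using positive semidefiniteness. The only cosmetic difference is that you bound the cross terms via Cauchy--Schwarz for the semi-inner product followed by $\bigl(\sum_r b_r\bigr)^2 \leq \lceil \ell \rceil \sum_r b_r^2$, while the paper uses $2|\alpha_{T_r}'M\alpha_{T_s}| \leq \alpha_{T_r}'M\alpha_{T_r} + \alpha_{T_s}'M\alpha_{T_s}$ directly (and works with an extremal $\bar\alpha$ rather than an arbitrary feasible one) -- these amount to the same computation.
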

\begin{proof}  Let $\bar \alpha$ achieve $\phi_M(\ell k)$. Moreover let
$\sum_{i=1}^{\lceil \ell \rceil} \alpha_i = \bar \alpha$ such that
$\sum_{i=1}^{\lceil \ell \rceil} \|\alpha_i\|_0 = \|\bar
\alpha\|_0$. We can choose $\alpha_i$'s such that $\|\alpha_i\|_0
\leq k$ since $\lceil \ell \rceil k \geq \ell k$. Since $M$ is
positive semi-definite, for any $i, j$ w $\alpha_i'M\alpha_i +
\alpha_j'M\alpha_j \geq 2\left|\alpha_i'M\alpha_j\right|.$
Therefore
$$
\begin{array}{rcl}
\phi_M(\ell k) & = &  \bar \alpha' M \bar\alpha =  \displaystyle \sum_{i=1}^{\lceil \ell \rceil} \alpha_i'M\alpha_i + \sum_{i=1}^{\lceil \ell \rceil} \sum_{j\neq i} \alpha_i'M\alpha_j \leq \sum_{i=1}^{\lceil \ell \rceil} \left\{ \alpha_i'M\alpha_i + ({\lceil \ell \rceil}-1)\alpha_i'M\alpha_i \right\}  \\
& \leq & \displaystyle {\lceil \ell \rceil} \sum_{i=1}^{\lceil
\ell \rceil} \|\alpha_i\|^2 \phi_M(\|\alpha_i\|_0)  \leq {\lceil \ell \rceil} \max_{i=1,\ldots,{\lceil
\ell \rceil}} \phi_M(\|\alpha_i\|_0) \leq {\lceil \ell \rceil}
\phi_M(k)
\end{array}
$$ where we used that $\sum_{i=1}^{\lceil \ell \rceil} \|\alpha_i\|^2=1$. \end{proof}

\section{Proof of Theorem 4}

\begin{proof}[Proof of Theorem \ref{Thm:Selection}]  By assumption
$\sup_{u\in \mathcal{U}}\| \hat \beta(u) - \beta(u) \|_{\infty} \leq \sup_{u\in \mathcal{U}}\| \hat \beta(u)
- \beta(u) \| \leq r^o < \inf_{u\in \mathcal{U}} \min_{ j\in T_u }
|\beta_j(u)|,$ which immediately implies the inclusion event (\ref{Eq:support}), since the converse of this event implies
$\| \hat \beta(u) - \beta(u) \|_{\infty}  \geq  \inf_{u\in \mathcal{U}}\min_{ j\in T_u }
|\beta_j(u)|.$

Consider the hard-thresholded estimator next.  To establish the
inclusion, we note that $\inf_{u\in\mathcal{U}}\min_{j \in T_u} |\hat \beta_j(u)|   \geq  \inf_{u\in\mathcal{U}}\min_{j \in T_u} \{ |\beta_j(u)| - |\beta_j(u) - \hat \beta_j(u)| \}
 >  \inf_{u\in\mathcal{U}}\min_{j \in T_u} |\beta_j(u)| - r^o > \gamma,$
by assumption on $\gamma$. Therefore $\inf_{u\in \mathcal{U}}\min_{j \in T_u} |\hat
\beta_j(u)|  > \gamma$ and
 $\text{support } (\beta(u)) \subseteq \text{support }
(\bar\beta(u))$ for all $u \in \mathcal{U}$. To establish the opposite inclusion, consider
$e_n$ $=$ $\sup_{u\in\mathcal{U}}$$\max_{j \notin T_u}$ $|\hat \beta_j(u)|.$
By definition of $r^o$,  $e_n \leq r^o$ and therefore $e_n < \gamma$ by the assumption on $\gamma$.
By the hard-threshold rule, all components smaller than
$\gamma$ are excluded from the support of $\bar \beta(u)$ which yields $ \text{support } (\bar\beta(u)) \subseteq \text{support }
(\beta(u))$.
\end{proof}

\section{Proof of Lemma 8 (used in Theorem 5)}

\noindent\begin{proof}[Proof of Lemma \ref{Lemma:E.6}] (Sparse Identifiability and Control of Empirical Error)
The proof of claim (\ref{E.2-RNI-tilde-q}) of this lemma follows  identically  the
proof of claim (\ref{E.2-RNI}) of Lemma \ref{Lemma:E2}, given in Appendix B,
after replacing $A_u$ with $\widetilde A_u$.  Next we bound the empirical error
 \begin{equation}\label{Def:epsilon3}
\begin{array}{rcl}\displaystyle\sup_{u \in \mathcal{U}, \delta \in\widetilde A_u(\widetilde m),  \delta\neq0 }\frac{ \left|\emperror_u(\delta) \right|}{\|\delta\|}  & \leq &\displaystyle \sup_{u \in \mathcal{U}, \delta \in\widetilde A_u(\widetilde m), \delta \neq 0 }  \frac{1}{\|\delta\|\sqrt{n}} \left| \int_0^{1} \delta'\Gn(\psi_i( \beta(u) + \gamma \delta,u))   d\gamma\right| \\
&  \leq & \displaystyle \frac{1}{\sqrt{n}}\epsilon_3(\widetilde m)
\end{array}
 \end{equation} where $\epsilon_3(\widetilde m):=\sup_{f\in\mathcal{\widetilde F}_{\widetilde m}}|\Gn(f)|$ and the class of functions $\mathcal{\widetilde F}_{\widetilde m}$ is defined in Lemma \ref{Lemma:VC-AB}. The result
 follows from the bound on $\epsilon_3(\widetilde m)$ holding uniformly in $\widetilde m \leq n$ given in Lemma \ref{Lemma:error-rate-A-B}.
\end{proof}

Next we control the empirical error $\epsilon_3$ defined in (\ref{Def:epsilon3}) for $\mathcal{\widetilde F}_{\widetilde{m}}$ defined below. We first bound uniform covering numbers of $\mathcal{\widetilde F}_{\widetilde{m}}$.

\begin{lemma}\label{Lemma:VC-AB}
Consider a fixed subset $T \subset \{1,2,\ldots,p\}$,
$T_u=\supp(\beta(u))$ such that $|T\setminus T_u| \leq \widetilde{m}$ and $|T_u| \leq s$ for some $u \in \mathcal{U}$. The class of functions
$$
\mathcal{F}_{T,u} = \left\{ \alpha'x_i( 1\{y_i\leq x_i'\beta\} - u
) \  : \alpha \in \mathbb{S}(\beta), \supp(\beta) \subseteq T
\right\} $$
 has a VC index bounded by $c(\widetilde m + s) + 2$. The class of functions
$$\mathcal{\widetilde F}_{\widetilde{m}} = \left\{ \mathcal{F}_{T,u} : \ u \in \mathcal{U},  T \subset \{1,2,\ldots,p\}, |T\setminus T_u| \leq \widetilde{m}  \right\}, $$
obeys, for some universal constants $C$ and $c$ and each
$\epsilon > 0$,
$$
\sup_{Q} N(\epsilon\|{\widetilde F}_{\widetilde{m}}\|_{Q,2}, \mathcal{\widetilde F}_{\widetilde{m}}, L_2(Q)) \leq C\(
32e/\epsilon\)^{4(c(\widetilde m + s)+2)} p^{2\widetilde
m}\left|\cup_{u \in \mathcal{U}} T_u\right|^{2s}.$$
\end{lemma}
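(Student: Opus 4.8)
The plan is to follow the template of Lemma~\ref{Lemma:UEI}: express each subgraph as a bounded composition (unions, intersections, complements) of classes of half-spaces living in a coordinate subspace of dimension $|T|\le s+\widetilde m$, invoke Lemma~2.6.17 of \cite{vdV-W} for a VC bound, and then sum the standard uniform-entropy estimate over the finitely many index sets that occur in $\mathcal{\widetilde F}_{\widetilde m}$.

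For the first claim I would fix $T$ and note $|T| = |T\cap T_u| + |T\setminus T_u| \le s + \widetilde m$. For $g(y_i,x_i) = \alpha'x_i(1\{y_i\le x_i'\beta\}-u)$ with $\supp(\alpha)\subseteq\supp(\beta)\subseteq T$, its subgraph decomposes as
\begin{equation*}
\{(y_i,x_i,t):g(y_i,x_i)\le t\} = \big(\{y_i\le x_i'\beta\}\cap\{(1-u)\alpha'x_i\le t\}\big)\cup\big(\{y_i>x_i'\beta\}\cap\{-u\,\alpha'x_i\le t\}\big),
\end{equation*}
and each of the four sets on the right is a half-space in the space spanned by $t$ and the $T$-coordinates of $(y_i,x_i)$, so the corresponding class of half-spaces has VC index at most $|T|+3\le s+\widetilde m+3$. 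Letting $(\beta,\alpha,u)$ vary independently in the two pieces only enlarges the collection of sets; by Lemma~2.6.17 of \cite{vdV-W} this enlarged collection, being a bounded number of Boolean operations applied to VC classes, is VC of index at most $c(\widetilde m+s)+2$ for a universal constant $c$, and the subgraph class of $\mathcal{F}_{T,u}$ is a subcollection of it, hence has VC index at most $c(\widetilde m+s)+2$. This is exactly the computation behind Lemma~\ref{Lemma:UEI}(1) applied to a single term rather than a difference, so the routine bookkeeping relegated to \cite{Supplementary Materail} there carries over verbatim; treating $u$ as an additional free parameter ranging over $\mathcal{U}$ does not affect it, since $(1-u)\alpha$ and $-u\alpha$ still sweep out bounded subsets of a $|T|$-dimensional coordinate space.

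For the covering-number claim I would observe that any $\mathcal{F}_{T,u}$ contributing to $\mathcal{\widetilde F}_{\widetilde m}$ has $T = A\cup B$ with $A:=T\cap T_u\subseteq\cup_{u'\in\mathcal{U}}T_{u'}$, $|A|\le s$, and $B:=T\setminus T_u\subseteq\{1,\dots,p\}$, $|B|\le\widetilde m$. Grouping by $(A,B)$ yields $\mathcal{\widetilde F}_{\widetilde m}\subseteq\bigcup_{(A,B)}\mathcal{G}_{A,B}$ with
\begin{equation*}
\mathcal{G}_{A,B} := \big\{\alpha'x_i(1\{y_i\le x_i'\beta\}-u):u\in\mathcal{U},\ \supp(\alpha)\subseteq\supp(\beta)\subseteq A\cup B\big\},
\end{equation*}
where the number of admissible pairs $(A,B)$ is at most $\big(\sum_{k\le s}\binom{|\cup_{u}T_{u}|}{k}\big)\big(\sum_{\ell\le\widetilde m}\binom{p}{\ell}\big)\le p^{2\widetilde m}|\cup_{u}T_{u}|^{2s}$ after absorbing numerical factors. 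Each $\mathcal{G}_{A,B}$ is VC of index $V_0:=c(\widetilde m+s)+2$ by the previous paragraph and is dominated by the common envelope $F_{\widetilde m}:=\max_{|J|\le\widetilde m+s}\|x_{i,J}\|$, so Theorem~2.6.7 of \cite{vdV-W} gives $\sup_Q N(\epsilon\|F_{\widetilde m}\|_{Q,2},\mathcal{G}_{A,B},L_2(Q))\le K V_0(16e)^{V_0}\epsilon^{-2(V_0-1)}$. Covering a union by the union of coverings, and using $V_0\le 2^{V_0}$ together with $\epsilon\le1$ to absorb numerical constants into the base $32e$ and the exponent $4V_0$ (the bound being trivial for $\epsilon>1$), the stated bound follows.

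The only delicate step is the VC-index bookkeeping in the second paragraph — confirming that the subgraph class is genuinely a bounded composition of half-space classes and tracking that the resulting index grows only linearly in $\widetilde m+s$ — but this replicates Lemma~\ref{Lemma:UEI}(1); the reduction to finitely many pieces $\mathcal{G}_{A,B}$, the counting of index sets, and the entropy bound are all routine.
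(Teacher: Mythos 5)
Your proposal is correct and follows essentially the same route as the paper: bound the VC index of the class with a fixed support set $T$ (of cardinality at most $\widetilde m+s$), apply Theorem 2.6.7 of \cite{vdV-W} to each such class, and multiply by a count of the admissible supports, which the paper also bounds by $p^{\widetilde m}\cdot 2|\cup_{u\in\mathcal{U}}T_u|^s$ via the same split into the at-most-$\widetilde m$ components outside $T_u$ and the at-most-$s$ components inside $\cup_u T_u$. The only cosmetic difference is in the first step: the paper embeds $\mathcal{F}_{T,u}$ into a sum $\mathcal{G}_T+\mathcal{H}_T$ of two VC-subgraph classes, whereas you decompose the subgraph directly into Boolean combinations of half-spaces (which is in fact the argument the paper itself uses, and relegates to the supplement, for Lemma \ref{Lemma:UEI}(1)).
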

\begin{proof}   The proof involves standard combinatorial arguments
and is relegated to the Supplementary Material Appendix \ref{Supplementary Material}.  \comment{
The class $\mathcal{F}_{T,u}$ is a subset of $\mathcal{F}_{T}:= \mathcal{G}_T + \mathcal{H}_T$ where $\mathcal{G}_{T} =\{ \alpha'x_i \cdot 1\{y_i\leq x_i'\beta\} \  : \alpha \in \mathbb{S}(\beta), $ $ \supp(\beta) \subseteq T
\}$ and $\mathcal{H}_{T} = \left\{ -v\cdot \alpha'x_i
 \  : v \in \mathcal{U}, \alpha \in \mathbb{S}(\beta), \supp(\beta) \subseteq T
\right\}$. The VC index of $\mathcal{G}_{T}$ and $\mathcal{H}_{T}$ is bounded by $c|T|$. Therefore the VC index of $\mathcal{F}_T$ is bounded by  $2c|T| +2 \leq V= 2 c(\widetilde{m} +s)+ 2$, for every $u\in\mathcal{U}$, which shows the first result.

To show the second result, we first note that the uniform covering numbers of $\mathcal{F}_T$
are bounded by $ \sup_{Q}N(\epsilon\|F_T\|_{Q,2}, \mathcal{F}_T,L_2(Q))
\leq C(2 V)(16e)^{2 V+ 2}( 1/\epsilon)^{2(2 V-1)},$ where $C$
is a universal constant (see \cite{vdV-W} Theorem 2.6.7).
We also note that $\mathcal{\widetilde F}_{\widetilde{m}}$ is a subset of
$\mathcal{\widetilde F}^o_{\widetilde{m}}$ $=$ $\{ \mathcal{F}_{T} :   T \subset \{1,2,\ldots,p\}, |T\setminus T_u| \leq \widetilde{m}, \ u \in \mathcal{U} \}. $
Therefore, the bound
stated in the lemma now follows by taking the product of the bound on the uniform covering numbers above with the total number of different function sets $\mathcal{F}_{T}$, indexed by models $T$, that generate $\mathcal{\widetilde F}^o_{\widetilde{m}}$, followed by some simplifications. To bound the number of function sets,
first, note that for any fixed $u \in \mathcal{U}$, since $|T\setminus T_u| \leq
\widetilde m$, we can pick at most $\max_{1 \leq k
\leq \widetilde m} \binom{p}{k} \leq p^{\widetilde m}$ different
models $T$; second, note that by varying across
$u\in \mathcal{U}$, we can generate at most $\sum_{k=1}^s
\binom{|\cup_{u\in \mathcal{U}}T_u|}{k} \leq \sum_{k=1}^s |\cup_{u\in \mathcal{U}}T_u|^k \leq 2|\cup_{u\in
\mathcal{U}} T_u |^s$ different sets $T_u$ since $s \leq |\cup_{u\in \mathcal{U}}T_u|$. The number of sets
$\mathcal{F}_T$ is therefore bounded by $ p^{\widetilde m} \cdot 2|\cup_{u\in
\mathcal{U}} T_u |^s$.}
\end{proof}

\begin{lemma}[Controlling empirical error $\epsilon_3$]\label{Lemma:error-rate-A-B}
Suppose that D.1 holds and $\left| \cup_{u \in \mathcal{U}} T_u\right| \leq n$. There exists a universal constant $A$ such that with
probability at least $1-\delta$,   $$
 \epsilon_3(\widetilde m):=\sup_{f \in \mathcal{\widetilde F}_{\widetilde m}} | \Gn (f ) | \leq A\delta^{-1/2}\sqrt{(\widetilde m \log (n\vee p) + s \log n) \phi(\widetilde m+s)} \text{ for all $\widetilde m \leq n$. }
$$
\end{lemma}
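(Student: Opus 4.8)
The plan is to mirror the proof of Lemma~\ref{Lemma:error-rate-1}: bound $\epsilon_3(\widetilde m)=\sup_{f\in\mathcal{\widetilde F}_{\widetilde m}}|\Gn(f)|$ by the maximal inequality (Lemma~\ref{master lemma}, the same one invoked for $\epsilon_1$), feeding it the polynomial uniform covering number bound of Lemma~\ref{Lemma:VC-AB} together with a crude envelope estimate. First I will record the envelope inequality: every $f\in\mathcal{\widetilde F}_{\widetilde m}$ has the form $\alpha'x_i(1\{y_i\le x_i'\beta\}-u)$ with $\|\alpha\|\le1$ and $\mathrm{supp}(\alpha)\subseteq\mathrm{supp}(\beta)$, $\|\beta_{T_u^c}\|_0\le\widetilde m$, $|T_u|\le s$, hence $|f|\le|\alpha'x_i|$ pointwise and $\|\alpha\|_0\le\widetilde m+s$, so by the definition of $\phi(\cdot)$ in (\ref{Def:EigSparse}),
$$\sup_{f\in\mathcal{\widetilde F}_{\widetilde m}}\En[f^2]\le\phi(\widetilde m+s)\ \ \text{ and }\ \ \sup_{f\in\mathcal{\widetilde F}_{\widetilde m}}\Ep[f^2]\le\phi(\widetilde m+s),$$
exactly as in (\ref{e1-eq:2}); one may take $\sqrt{\widetilde m+s}\,\max_{j\le p}|x_{ij}|$ as a pointwise envelope, but only these two second-moment bounds enter the maximal inequality.

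Next I will extract the entropy term. By Lemma~\ref{Lemma:VC-AB}, for $\epsilon\le 1$,
$$\log\sup_Q N(\epsilon\|{\widetilde F}_{\widetilde m}\|_{Q,2},\mathcal{\widetilde F}_{\widetilde m},L_2(Q))\lesssim (\widetilde m+s)\log(1/\epsilon)+\widetilde m\log p+s\log\big|\cup_{u\in\mathcal{U}}T_u\big|,$$
and since $|\cup_{u\in\mathcal{U}}T_u|\le n$ this is $\lesssim\big(\widetilde m\log(n\vee p)+s\log n\big)\log(1/\epsilon)$, using $s\ge1$ to absorb the $(\widetilde m+s)\log(1/\epsilon)$ piece into the coefficient of $\log(1/\epsilon)$ (and $\widetilde m\log p\le\widetilde m\log(n\vee p)$). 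Plugging this into Lemma~\ref{master lemma} with $\bm$ chosen large enough that $\log\bm\lesssim\log(n\vee p)$ — so that the two polynomial factors $p^{2\widetilde m}$ and $|\cup_u T_u|^{2s}\le n^{2s}$ are accounted for — yields, uniformly in $\widetilde m\le n$, with probability at least $1-\delta$,
$$\epsilon_3(\widetilde m)\le A\,\delta^{-1/2}\sqrt{\widetilde m\log(n\vee p)+s\log n}\cdot\max\Big\{\sup_{f}\Ep[f^2]^{1/2},\ \sup_f\En[f^2]^{1/2}\Big\},$$
where the $\delta^{-1/2}$ comes from controlling the supremum in $L_2$ and applying Chebyshev, precisely as in Lemma~\ref{Lemma:error-rate-1}. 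Combining with the envelope bounds from the previous paragraph replaces the $\max\{\cdots\}$ by $\phi(\widetilde m+s)^{1/2}$ and gives the asserted estimate.

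The uniformity over all $\widetilde m\in\{0,1,\dots,n\}$ is handled either directly by the uniform-in-$m$ form of Lemma~\ref{master lemma} or by a union bound over the $n+1$ values of $\widetilde m$, replacing $\delta$ by $\delta/(n+1)$; the extra $\log((n+1)/\delta)$ is dominated by $s\log n$ (as $s\ge1$), so only the universal constant $A$ changes. The main obstacle is purely the bookkeeping: checking that, after substituting $|\cup_{u\in\mathcal{U}}T_u|\le n$, the entropy produced by Lemma~\ref{Lemma:VC-AB} collapses to the advertised $\sqrt{\widetilde m\log(n\vee p)+s\log n}$ rate (rather than something with an extra $(\widetilde m+s)$ factor), and that the resulting bound genuinely holds simultaneously for every $\widetilde m\le n$. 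No new probabilistic ingredient is needed beyond Lemma~\ref{Lemma:VC-AB}, the envelope inequality, and the maximal inequality already used for $\epsilon_1$.
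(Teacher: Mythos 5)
Your proposal matches the paper's proof: it invokes the covering-number bound of Lemma \ref{Lemma:VC-AB}, feeds it into the maximal inequality of Lemma \ref{master lemma} with $\theta_m$ chosen so that $\widetilde m \log(n\vee \theta_m)\lesssim \widetilde m\log(n\vee p)+s\log n$ (the paper takes $\theta_m=p^2 n^{2s/\widetilde m}$), and then replaces the second-moment factor by $\phi(\widetilde m+s)^{1/2}$ via the same envelope bound $|f|\le|\alpha'x_i|$ with $\|\alpha\|_0\le\widetilde m+s$. The approach and all key ingredients are identical, so the proposal is correct as written.
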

\begin{proof}
\comment{Lemma \ref{Lemma:VC-AB} bounds the uniform covering number of
$\mathcal{\widetilde F}_{\widetilde m}$. Using Lemma \ref{master
lemma} with $m = \widetilde m$ and $\bm = p^2 \cdot n^{2s/\widetilde
m}$,  we conclude that uniformly
in $\widetilde m \leq n$
\begin{equation}\label{e1-eq:3}
 \sup_{f \in \mathcal{\widetilde F}_{\widetilde m}} | \Gn (f ) |  \leq A\delta^{-1/2}  \sqrt{ \widetilde  m \log ( n \vee  \bm)} \cdot  \max\left\{  \sup_{f\in \mathcal{\widetilde F}_{\widetilde m}}\Ep[f^2]^{1/2}, \sup_{f\in \mathcal{\widetilde F}_{\widetilde m}}\En[f^2]^{1/2} \right\}
 \end{equation}}
Lemma \ref{Lemma:VC-AB} bounds the uniform covering number of
$\mathcal{\widetilde F}_{\widetilde m}$. Using Lemma \ref{master
lemma} with $N \leq 2n$, $m = \widetilde m+s$ and $\bm = p^{2([m-s]/m)} \cdot n^{2(s/m)} = p^{2(\widetilde m/[\widetilde m+s])}  \cdot n^{2(s/[\widetilde
m+s])}$,  we conclude that uniformly
in $0\leq \widetilde m \leq n$
\begin{equation}\label{e1-eq:3}
 \begin{array}{c}
 \displaystyle \sup_{f \in \mathcal{\widetilde F}_{\widetilde m}} | \Gn (f ) |  \leq A\delta^{-1/2}  \sqrt{ (\widetilde  m+s) \log ( n \vee  \bm)} \cdot  \max\left\{  \sup_{f\in \mathcal{\widetilde F}_{\widetilde m}}\Ep[f^2]^{1/2}, \sup_{f\in \mathcal{\widetilde F}_{\widetilde m}}\En[f^2]^{1/2} \right\}\\
\leq A'\delta^{-1/2}  \sqrt{ \widetilde  m \log ( n \vee  p) + s \log n} \cdot  \max\left\{  \sup_{f\in \mathcal{\widetilde F}_{\widetilde m}}\Ep[f^2]^{1/2}, \sup_{f\in \mathcal{\widetilde F}_{\widetilde m}}\En[f^2]^{1/2} \right\}\\

 \end{array}
 \end{equation}
with probability at least $1-\delta$.
The result follows, since for any $f \in \mathcal{\widetilde F}_{\widetilde m}$, the
corresponding vector $\alpha$ obeys$\|\alpha\|_0 \leq
\widetilde m+s$, so that $ \En[f^2] \leq \En[ |\alpha'x_i|^2 ] \leq \phi(\widetilde m+s)$ and $ \Ep[f^2] \leq \Ep[ |\alpha'x_i|^2 ] \leq \phi(\widetilde m+s)$ by definition of $\phi(\widetilde m+s)$.
\end{proof}

\section{Maximal Inequalities for a Collection of Empirical
Processes}\label{Sec:SubGaussian}

The main results here are  Lemma \ref{expo1} and Lemma \ref{master lemma},
used in the proofs of Theorem \ref{Thm:BoundLAMBDA} and Theorems \ref{Thm:Sparsity} and \ref{Thm:MainTwoStep}, respectively.
Lemma \ref{master lemma} gives a maximal inequality that controls the empirical process
uniformly over a collection of classes of functions using
class-dependent bounds.   We need this lemma  because the standard
maximal inequalities applied to the union of function classes
yield a single class-independent bound that is too large for our
purposes. We prove Lemma \ref{master lemma} by first stating Lemma \ref{expo1},
giving a bound on tail probabilities of a separable sub-Gaussian
process, stated in terms of uniform covering numbers. Here we want
to explicitly trace the impact of covering numbers on the tail
probability, since these covering numbers grow rapidly under
increasing parameter dimension and thus help to tighten the probability bound.  Using the symmetrization
approach, we then obtain Lemma \ref{expo3}, giving a bound on tail
probabilities of a general separable empirical process, also
stated in terms of uniform covering numbers. Finally, given a
growth rate on the covering numbers, we obtain Lemma
\ref{master lemma}.

\begin{lemma}[Exponential Inequality for Sub-Gaussian Process]\label{expo1} Consider any linear zero-mean separable process $\{\mathbb{G}(f):f\in \mathcal{F}\}$,  whose index set $\mathcal{F}$ includes zero, is equipped with a $L_2(P)$ norm, and has envelope $F$. Suppose further that the process is sub-Gaussian, namely for each $g \in \mathcal{F} - \mathcal{F}$:
$ \mathbb{P}\left\{ \left\vert \mathbb{G}(g) \right\vert >\eta
\right\} \leq 2\exp \left( -\frac{1}{2}\eta ^{2}/D^{2}\|g\|_{P,2}
^{2}\right)$ for any $\eta >0$, with $D$ a positive constant; and
suppose that we have the following upper bound on the  $L_2(P)$
covering numbers for $\mathcal{F}$:
$$N(\epsilon\|F\|_{P,2}, \mathcal{F}, L_2(P)) \leq
n(\epsilon, \mathcal{F},P) \text{ for each } \epsilon >0, $$ where
$n(\epsilon, \mathcal{F},P)$ is increasing in $1/\epsilon$, and
 $\epsilon \sqrt{\log n(\epsilon, \mathcal{F},P)} \to 0$ as $1/\epsilon \to \infty$ and is decreasing in $1/\epsilon$. Then for $K>D$, for some universal constant $c < 30$, $\rho(\mathcal{F},P) := \sup_{f \in \mathcal{F}} \|f\|_{P,2}/ \|F\|_{P,2}$,
$$
\mathbb{P}\left \{ \frac{\sup_{f \in \mathcal{F}}| \mathbb{G}(f)
|}{ \|F\|_{P,2} \int_{0}^{\rho(\mathcal{F},P)/4} \sqrt{ \log  n
(x, \mathcal{F},P)} d x } >  c K   \right \} \leq
\int_{0}^{\rho(\mathcal{F},P)/2}  \epsilon^{-1} n (\epsilon,
\mathcal{F},P)^{- \{ (K/D)^{2} -1 \}} d\epsilon.
$$
\end{lemma}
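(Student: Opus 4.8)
The plan is to prove this by a \emph{generic chaining} argument engineered to bound a tail probability rather than just an expectation, with the covering numbers of $\mathcal{F}$ entering the estimate explicitly. By separability we may replace $\sup_{f\in\mathcal{F}}|\mathbb{G}(f)|$ by the supremum over a fixed countable dense subset, and since $\mathbb{G}$ is linear and zero-mean, $\mathbb{G}(0)=0$, which will serve as the base point of the chain. Write $\rho:=\rho(\mathcal{F},P)$ and $\epsilon_k:=\rho\,2^{-k}$ for $k\ge 0$; for each $k$ fix a minimal internal $\epsilon_k\|F\|_{P,2}$-net $\mathcal{F}_k\subseteq\mathcal{F}$, so $|\mathcal{F}_k|\le n(\epsilon_k,\mathcal{F},P)$, with $\mathcal{F}_0=\{0\}$ (legitimate because $0\in\mathcal{F}$ and $\sup_{f}\|f\|_{P,2}=\rho\|F\|_{P,2}$). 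Writing $\pi_k f$ for a nearest point of $\mathcal{F}_k$ to $f$, linearity yields the telescoping decomposition $\mathbb{G}(f)=\sum_{k\ge 1}\mathbb{G}\bigl(\pi_k f-\pi_{k-1}f\bigr)$, the series converging because $\|f-\pi_k f\|_{P,2}\to 0$ and the sub-Gaussian hypothesis drives the tail $\mathbb{G}(f-\pi_k f)$ to zero.

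The core of the argument is a level-by-level union bound. At level $k$, each increment $\pi_k f-\pi_{k-1}f$ lies in $\mathcal{F}-\mathcal{F}$ with $\|\pi_k f-\pi_{k-1}f\|_{P,2}\le(\epsilon_{k-1}+\epsilon_k)\|F\|_{P,2}=3\epsilon_k\|F\|_{P,2}$, and as $f$ varies there are at most $|\mathcal{F}_{k-1}|\,|\mathcal{F}_k|\le n(\epsilon_k,\mathcal{F},P)^2$ distinct such increments (using that $n(\cdot,\mathcal{F},P)$ increases in $1/\epsilon$). The sub-Gaussian increment inequality then gives, for any thresholds $\eta_k>0$,
\begin{equation*}
\mathbb{P}\Bigl(\max_{g}|\mathbb{G}(g)|>\eta_k\Bigr)\ \le\ 2\,n(\epsilon_k,\mathcal{F},P)^{2}\exp\Bigl(-\frac{\eta_k^{2}}{18\,D^{2}\,\epsilon_k^{2}\,\|F\|_{P,2}^{2}}\Bigr),
\end{equation*}
where the maximum runs over the level-$k$ increments $g$. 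The decisive choice is $\eta_k:=6K\,\epsilon_k\,\|F\|_{P,2}\sqrt{\log n(\epsilon_k,\mathcal{F},P)}$, for which the exponent equals $2(K/D)^{2}\log n(\epsilon_k,\mathcal{F},P)$, so the right-hand side is at most $2\,n(\epsilon_k,\mathcal{F},P)^{-\{(K/D)^{2}-1\}}$, using $K>D$ and $n(\cdot,\mathcal{F},P)\ge 1$. On the complement of the union over $k$ of these exceptional events one has $\sup_{f}|\mathbb{G}(f)|\le\sum_{k\ge 1}\eta_k$, and this complement has probability at least $1-\sum_{k\ge 1}2\,n(\epsilon_k,\mathcal{F},P)^{-\{(K/D)^{2}-1\}}$.

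It then remains to pass from these two dyadic sums to the integrals in the statement. Since $n(\cdot,\mathcal{F},P)$ is nonincreasing and $x\mapsto x\sqrt{\log n(x,\mathcal{F},P)}$ is nondecreasing, elementary Riemann-sum comparisons — together with the factor-$3$ increment-norm bound and the geometry $\epsilon_k=\rho\,2^{-k}$ — bound $\sum_{k\ge 1}\eta_k$ by a constant multiple of $K\|F\|_{P,2}\int_{0}^{\rho/4}\sqrt{\log n(x,\mathcal{F},P)}\,dx$ and bound $\sum_{k\ge 1}n(\epsilon_k,\mathcal{F},P)^{-\{(K/D)^{2}-1\}}$ by a constant multiple of $\int_{0}^{\rho/2}\epsilon^{-1}n(\epsilon,\mathcal{F},P)^{-\{(K/D)^{2}-1\}}\,d\epsilon$; chasing the numerical constants yields the stated universal constant $c<30$. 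A standard density argument ($\cup_k\mathcal{F}_k$ is dense in $\mathcal{F}$, and $\mathbb{G}$ is continuous in the $L_2(P)$ seminorm) upgrades the bound on the finite maxima over the $\mathcal{F}_k$ to the supremum over $\mathcal{F}$, and convergence of Dudley's entropy integral is guaranteed by the hypothesis that $\epsilon\sqrt{\log n(\epsilon,\mathcal{F},P)}\to 0$.

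The main obstacle is the simultaneous calibration of the thresholds $\eta_k$. They must be scaled proportionally to $K$ — not to the ``expectation-optimal'' scale $\epsilon_k\|F\|_{P,2}\sqrt{\log n(\epsilon_k,\mathcal{F},P)}$ — so that, after the union bound over the $n(\epsilon_k,\mathcal{F},P)^2$ increments at level $k$, the surviving power $(K/D)^2-1$ is strictly positive and the \emph{infinite} sum of level-wise failure probabilities collapses to a single entropy integral; this is precisely where the hypothesis $K>D$ is used. At the same time the $\eta_k$ must remain small enough that $\sum_k\eta_k$ does not exceed a fixed multiple of Dudley's integral, which rules out inflating them further. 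Once this balance is struck and the attendant constants are tracked, the remaining ingredients — the separability reduction, the telescoping, convergence of the chain, and the dyadic-to-integral comparisons — are routine.
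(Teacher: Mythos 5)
Your proposal is correct and follows essentially the same dyadic chaining argument as the paper's own proof: the same calibration of the level-$k$ thresholds proportional to $K\,\epsilon_k\sqrt{\log n(\epsilon_k,\mathcal{F},P)}$, the same level-wise union bound via sub-Gaussianity producing the surviving exponent $(K/D)^2-1$, and the same dyadic-sum-to-entropy-integral comparisons. The only (immaterial) difference is that you anchor the chain at the zero function, whereas the paper starts at a net of radius $\rho(\mathcal{F},P)/4$ and bounds the coarsest-level term $\mathbb{G}(\pi_{q_0}(f))$ separately; your constant accounting is no looser than the paper's.
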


The result of Lemma \ref{expo1} is in spirit of the Talagrand tail
inequality for Gaussian processes. Our result is less sharp than Talagrand's result in the Gaussian case (by
a log factor), but it applies to more general sub-Gaussian processes.

In order to prove a bound on tail probabilities of a general
separable empirical process, we need to go through a
symmetrization argument. Since we use a data-dependent threshold, we
need an appropriate extension of the classical symmetrization
lemma to allow for this.  Let us call a threshold function
$x:\RR^n \mapsto \RR$ $k$-sub-exchangeable if, for any $v, w \in
\RR^n$ and any vectors $\tilde v, \tilde w$ created by the
pairwise exchange of the components in $v$ with components in $w$,
we have that $ x(\tilde v) \vee x(\tilde w) \geq [x(v) \vee
x(w)]/k.$ Several functions satisfy this property, in particular
$x(v) = \|v\|$ with $k = \sqrt{2}$ and constant functions with
$k=1$. The following result generalizes the standard
symmetrization lemma for probabilities (Lemma 2.3.7 of
\cite{vdV-W}) to the case of a random threshold $x$ that is
sub-exchangeable.

\begin{lemma}[Symmetrization with Data-dependent Thresholds]\label{Lemma:SymTau}
Consider arbitrary independent stochastic processes
$Z_1,\ldots,Z_n$ and arbitrary functions $\mu_1,\ldots,\mu_n :
\mathcal{F}\mapsto \RR$. Let $x(Z)=x(Z_1,\ldots,Z_n)$ be a
$k$-sub-exchangeable random variable and for any $\tau \in (0,1)$
let $q_\tau$ denote the $\tau$ quantile of $x(Z)$, $\bar{p}_\tau
:= P(x(Z) \leq q_\tau) \geq \tau$, and $p_\tau := P(x(Z) <
q_\tau)\leq \tau$. Then
$$  P\( \left\| \sum_{i=1}^n Z_i \right\|_{\mathcal{F}} > x_0 \vee x(Z) \) \leq \frac{4}{\bar{p}_\tau } P\( \left\|\sum_{i=1}^n \varepsilon_i \(Z_i - \mu_i\)  \right\|_{\mathcal{F}} > \frac{x_0 \vee x(Z)}{4k} \) + p_\tau $$
where $x_0$ is a constant such that $ \inf_{f \in \mathcal{F}} P\(
\left|\sum_{i=1}^n Z_i(f)\right| \leq \frac{x_0}{2} \) \geq 1  -
\frac{\bar{p}_\tau}{2}$.
\end{lemma}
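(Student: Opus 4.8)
The plan is to adapt the classical symmetrization argument of Lemma~2.3.7 in \cite{vdV-W}, using an independent copy as the ghost sample, and to push the \emph{random} threshold $x(Z)$ through the symmetrization step --- which is exactly what $k$-sub-exchangeability is designed to allow. First I would introduce an independent copy $Z'=(Z_1',\dots,Z_n')$ of $Z=(Z_1,\dots,Z_n)$ and i.i.d.\ Rademacher signs $\varepsilon=(\varepsilon_1,\dots,\varepsilon_n)$, all mutually independent. Write $S(Z)=\sum_{i=1}^n Z_i$, $S(Z')=\sum_{i=1}^n Z_i'$, and $h(Z)=x_0\vee x(Z)$, so the event of interest is $A=\{\|S(Z)\|_{\mathcal F}>h(Z)\}$; using measurability of the relevant suprema, on $A$ select $\hat f=\hat f(Z)\in\mathcal F$ with $|S(Z)(\hat f)|>h(Z)$, and put $\hat f=0$ off $A$.

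Next I would peel off the ghost sample. Since $Z'\overset{d}{=}Z$ is independent of $Z$ and $x_0$ satisfies $\inf_{f\in\mathcal F}P(|S(Z)(f)|\le x_0/2)\ge 1-\bar p_\tau/2$, conditioning on $Z$ gives $P(|S(Z')(\hat f)|\le x_0/2\mid Z)\ge 1-\bar p_\tau/2$, so $(1-\bar p_\tau/2)\,P(A)\le P\big(A\cap\{|S(Z')(\hat f)|\le x_0/2\}\big)$. On this intersection the triangle inequality gives $\|S(Z)-S(Z')\|_{\mathcal F}\ge|S(Z)(\hat f)-S(Z')(\hat f)|\ge h(Z)-x_0/2\ge\tfrac12 h(Z)$, hence
$$\Big(1-\tfrac{\bar p_\tau}{2}\Big)P(A)\ \le\ P\Big(\|S(Z)-S(Z')\|_{\mathcal F}\ge\tfrac12 h(Z)\Big).$$

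The heart of the argument is the symmetrization, where $k$-sub-exchangeability and the quantile $q_\tau$ come in. For a fixed sign vector $\varepsilon$, the map that exchanges $Z_i\leftrightarrow Z_i'$ at every coordinate with $\varepsilon_i=-1$ preserves the joint law of $(Z,Z')$, since each pair $(Z_i,Z_i')$ is exchangeable and the pairs are independent; writing $\tilde Z^{\varepsilon}$ and $\tilde Z^{-\varepsilon}$ for the two resulting blocks, one has $S(\tilde Z^{\varepsilon})-S(\tilde Z^{-\varepsilon})=\sum_{i=1}^n\varepsilon_i(Z_i-Z_i')$, so that $P\big(\|S(Z)-S(Z')\|\ge\tfrac12 h(Z)\big)=P\big(\|\sum_i\varepsilon_i(Z_i-Z_i')\|\ge\tfrac12(x_0\vee x(\tilde Z^{\varepsilon}))\big)$ for every $\varepsilon$, and hence with $\varepsilon$ random as well. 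Then $k$-sub-exchangeability applied to the pair $\tilde Z^{\varepsilon},\tilde Z^{-\varepsilon}$ yields $x(\tilde Z^{\varepsilon})\vee x(\tilde Z^{-\varepsilon})\ge\tfrac1k\big(x(Z)\vee x(Z')\big)\ge\tfrac1k x(Z)$, whence $\big(x_0\vee x(\tilde Z^{\varepsilon})\big)\vee\big(x_0\vee x(\tilde Z^{-\varepsilon})\big)\ge\tfrac1k h(Z)$; since $x(\tilde Z^{-\varepsilon})\overset{d}{=}x(Z)$ has $q_\tau$ among its $\tau$-quantiles, discarding the event $\{x(\tilde Z^{-\varepsilon})<q_\tau\}$, of probability $p_\tau$, allows one to bound $x(\tilde Z^{-\varepsilon})$ below by $q_\tau$ on the retained event and thereby reduce the threshold to a multiple of $h(Z)$ itself, at the cost of the additive $p_\tau$. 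Finally $\|\sum_i\varepsilon_i(Z_i-Z_i')\|\le\|\sum_i\varepsilon_i(Z_i-\mu_i)\|+\|\sum_i\varepsilon_i(Z_i'-\mu_i)\|$ together with $(Z,\varepsilon)\overset{d}{=}(Z',\varepsilon)$ turns the bound into $2\,P\big(\|\sum_i\varepsilon_i(Z_i-\mu_i)\|_{\mathcal F}>h(Z)/(4k)\big)$, and collecting the constants $\tfrac{1}{1-\bar p_\tau/2}\cdot 2=\tfrac{4}{2-\bar p_\tau}\le\tfrac{4}{\bar p_\tau}$ (valid since $\bar p_\tau\le1$) gives the asserted inequality.

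I expect the main obstacle to be precisely this last step: under the sign-exchange map the threshold becomes $x(\tilde Z^{\varepsilon})$ rather than $x(Z)$, and a given sign pattern controls only one of the two exchanged blocks, so one must combine $k$-sub-exchangeability --- which bounds the \emph{maximum} of the two blocks' thresholds from below by $x(Z)/k$ --- with the truncation at $q_\tau$, the source of the $+p_\tau$ term, in order to return to a threshold proportional to $x(Z)$. A secondary technical point is checking that the selection of $\hat f$ and the conditional-probability manipulations are measurable, which is where separability of the process is used.
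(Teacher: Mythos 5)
Your overall strategy (ghost sample, Rademacher pairwise exchange, $k$-sub-exchangeability, triangle inequality with the $\mu_i$) is the same as the paper's, but the place where you insert the truncation at $q_\tau$ is wrong, and this creates a genuine gap at precisely the step you flag as the main obstacle. After your ghost-sample step the deviation threshold is $\tfrac12(x_0\vee x(Z))$, which involves only the original block. Under the sign-exchange map this becomes $\tfrac12\bigl(x_0\vee x(\tilde Z^{\varepsilon})\bigr)$, and $k$-sub-exchangeability controls only the \emph{maximum} $x(\tilde Z^{\varepsilon})\vee x(\tilde Z^{-\varepsilon})\ge k^{-1}\bigl(x(Z)\vee x(Z')\bigr)$. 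Conditioning on $\{x(\tilde Z^{-\varepsilon})\ge q_\tau\}$ gives no lower bound on $x(\tilde Z^{\varepsilon})$: the maximum may be attained by the other block, in which case $x(\tilde Z^{\varepsilon})$ can be arbitrarily small and the threshold $\tfrac12\bigl(x_0\vee x(\tilde Z^{\varepsilon})\bigr)=\tfrac{x_0}{2}$ can fall strictly below the target $\tfrac{x_0\vee x(Z)}{2k}$. A second instance of the same asymmetry appears in your last step: $P\bigl(\|\sum_i\varepsilon_i(Z_i'-\mu_i)\|>h(Z)/(4k)\bigr)$ is \emph{not} equal to $P\bigl(\|\sum_i\varepsilon_i(Z_i-\mu_i)\|>h(Z)/(4k)\bigr)$, because the threshold is tied to the other sample; the distributional identity $(Z,\varepsilon)\overset{d}{=}(Z',\varepsilon)$ would put $h(Z')$ in the threshold, not $h(Z)$.

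The paper's proof avoids both problems by truncating at the very beginning: it first discards $\{x(Z)<q_\tau\}$ (this is the sole source of the clean additive $p_\tau$), and then in the ghost-sample Bonferroni step it intersects with \emph{both} $\{|\sum_i Y_i(f_Z)|\le x_0/2\}$ and $\{x(Y)\le q_\tau\}$, which has probability at least $\bar p_\tau/2$ (whence the factor $2/\bar p_\tau$ there, rather than your $1/(1-\bar p_\tau/2)$). Since then $x(Y)\le q_\tau\le x(Z)$, the deviation threshold can be written as the \emph{joint} quantity $\tfrac12\bigl(x_0\vee x(Z)\vee x(Y)\bigr)$. This joint threshold is exactly what survives the exchange map---sub-exchangeability bounds the max over the two blocks---and what, after the triangle inequality, dominates $\tfrac{x_0\vee x(Y)}{4k}$ for the $Y$-term and $\tfrac{x_0\vee x(Z)}{4k}$ for the $Z$-term separately, so that the two resulting probabilities really are identical and collapse to $2P(\cdot)$. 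To repair your argument you must move the truncation to the front and carry $x(Y)$ inside the threshold from the ghost-sample step onward; the late truncation on $x(\tilde Z^{-\varepsilon})$ cannot substitute for this.
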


Note that we can recover the classical symmetrization lemma for
fixed thresholds by setting $k=1$, $\bar p_{\tau}=1$, and
$p_{\tau} = 0$.

\begin{lemma}[Exponential inequality for separable empirical process]\label{expo3} Consider a separable empirical process $\mathbb{G}_n(f) = n^{-1/2} \sum_{i=1}^n \{f(Z_i) - \Ep[f(Z_i)]\}$ and the empirical
measure $\mathbb{P}_n$ for $Z_1,\ldots,Z_n$, an underlying i.i.d. data sequence.  Let $K>1$ and $\tau \in (0,1)$ be constants,  and  $e_n(\mathcal{F}, \mathbb{P}_n)=e_n(\mathcal{F},Z_1,\ldots,Z_n)$ be a $k$-sub-exchangeable random variable, such that
$$
\|F\|_{\mathbb{P}_n,2} \int_{0}^{\rho(\mathcal{F},\mathbb{P}_n)/4}
\sqrt{\log  n (\epsilon, \mathcal{F}, \mathbb{P}_n)} d \epsilon  \leq
e_n(\mathcal{F}, \mathbb{P}_n)\text{ and }  \sup_{f \in
\mathcal{F}} \text{var}_{\mathbb{P}}f \leq \frac{\tau}{2} (4kcK
e_n(\mathcal{F},\mathbb{P}_n) )^2
$$
for the same constant $c>0$ as in Lemma \ref{expo1}, then
$$
\mathbb{P}\left\{\sup_{f \in \mathcal{F}} |\mathbb{G}_n(f)| \geq
4kcK e_n(\mathcal{F}, \mathbb{P}_n)\right\}  \leq  \frac{4}{\tau}
\Ep_{\mathbb{P}}\left(\left[ \int_{0}^{\rho(\mathcal{F},
\mathbb{P}_n)/2}  \epsilon^{-1} n (\epsilon, \mathcal{F}, \mathbb{P}_n)^{-\{ K^2
-1 \}} d\epsilon  \right]\wedge 1 \right) + \tau.
$$
\end{lemma}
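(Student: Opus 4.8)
The plan is to combine the data-dependent symmetrization of Lemma~\ref{Lemma:SymTau} with the conditional sub-Gaussian maximal inequality of Lemma~\ref{expo1}: once we condition on the sample $Z_1,\dots,Z_n$ the Rademacher process becomes sub-Gaussian with respect to the now-deterministic $L_2(\mathbb{P}_n)$ seminorm, which is precisely the situation Lemma~\ref{expo1} is built for, and all of $e_n(\mathcal{F},\mathbb{P}_n)$, $\rho(\mathcal{F},\mathbb{P}_n)$, $n(\cdot,\mathcal{F},\mathbb{P}_n)$ become deterministic constants.

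First I would apply Lemma~\ref{Lemma:SymTau} with the centered summands $Z_i(f)=n^{-1/2}\{f(Z_i)-\Ep[f(Z_i)]\}$ and the \emph{deterministic} functions $\mu_i(f)=-n^{-1/2}\Ep[f(Z_i)]$, so that $\sum_{i=1}^n Z_i(f)=\Gn(f)$ while $\sum_{i=1}^n\varepsilon_i\{Z_i(f)-\mu_i(f)\}=\Gn^o(f):=n^{-1/2}\sum_{i=1}^n\varepsilon_i f(Z_i)$, the symmetrized empirical process. I would use the threshold $x(Z)=4kcK\,e_n(\mathcal{F},\mathbb{P}_n)$, which is $k$-sub-exchangeable because $e_n$ is. The only remaining input for Lemma~\ref{Lemma:SymTau} is an auxiliary constant $x_0$ with $\inf_{f\in\mathcal{F}}\mathbb{P}(|\Gn(f)|\le x_0/2)\ge 1-\bar p_\tau/2$; since $\text{var}(\Gn(f))=\text{var}_{\mathbb{P}}f$, Chebyshev's inequality shows any $x_0$ with $x_0^2\gtrsim \tau^{-1}\sup_{f}\text{var}_{\mathbb{P}}f$ is admissible, and the hypothesis $\sup_{f}\text{var}_{\mathbb{P}}f\le \tfrac{\tau}{2}(4kcK e_n)^2$ is exactly what allows $x_0$ to be taken at (a constant multiple of) the threshold, so that $x_0\vee x(Z)$ is of order $4kcK e_n$. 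Lemma~\ref{Lemma:SymTau} then delivers
\[
\mathbb{P}\Big(\sup_{f\in\mathcal{F}}|\Gn(f)|\ge 4kcK\,e_n(\mathcal{F},\mathbb{P}_n)\Big)\ \le\ \frac{4}{\bar p_\tau}\,\mathbb{P}\Big(\sup_{f\in\mathcal{F}}|\Gn^o(f)|\ge cK\,e_n(\mathcal{F},\mathbb{P}_n)\Big)+p_\tau .
\]

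Next I would estimate the symmetrized probability by conditioning on $Z_1,\dots,Z_n$. Conditionally, $\{\Gn^o(f):f\in\mathcal{F}\}$ is a zero-mean separable linear process (adjoin $0$ to $\mathcal{F}$ if it is not already present), and by Hoeffding's inequality it is sub-Gaussian with respect to $\|\cdot\|_{\mathbb{P}_n,2}$ with constant $D=1$, with envelope $\|F\|_{\mathbb{P}_n,2}$ and $L_2(\mathbb{P}_n)$ covering numbers bounded by $n(\epsilon,\mathcal{F},\mathbb{P}_n)$. Applying Lemma~\ref{expo1} conditionally with $P=\mathbb{P}_n$, $D=1$, and invoking the hypothesis $\|F\|_{\mathbb{P}_n,2}\int_0^{\rho(\mathcal{F},\mathbb{P}_n)/4}\sqrt{\log n(\epsilon,\mathcal{F},\mathbb{P}_n)}\,d\epsilon\le e_n(\mathcal{F},\mathbb{P}_n)$ — so that $\{\sup_f|\Gn^o(f)|\ge cK e_n\}$ is contained in $\{\sup_f|\Gn^o(f)|\ge cK$ times the integral width$\}$ — we get
\[
\mathbb{P}\Big(\sup_{f\in\mathcal{F}}|\Gn^o(f)|\ge cK\,e_n(\mathcal{F},\mathbb{P}_n)\ \Big|\ Z_1,\dots,Z_n\Big)\ \le\ \int_0^{\rho(\mathcal{F},\mathbb{P}_n)/2}\epsilon^{-1}\,n(\epsilon,\mathcal{F},\mathbb{P}_n)^{-\{K^2-1\}}\,d\epsilon .
\]
Since the left-hand side is a probability it is also at most $1$; taking expectations over $Z_1,\dots,Z_n$, substituting into the symmetrization bound, and using $\bar p_\tau\ge\tau$ and $p_\tau\le\tau$, produces exactly the asserted inequality with the prefactor $4/\tau$ and the additive $\tau$.

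I expect the bookkeeping in the symmetrization step to be the main obstacle: one must (i) verify that $x(Z)=4kcK e_n(\mathcal{F},\mathbb{P}_n)$ is genuinely $k$-sub-exchangeable and that allowing $\mu_i$ to be a deterministic centering function is permitted by Lemma~\ref{Lemma:SymTau}; (ii) track the numerical constants through Chebyshev's inequality so that the variance hypothesis yields an $x_0$ that does not inflate the threshold in the symmetrized problem beyond $cK e_n$; and (iii) justify the conditioning carefully, since the variance control is an unconditional statement whereas the sub-Gaussianity and covering-number structure required by Lemma~\ref{expo1} live at the conditional level, with $\mathbb{P}_n$ playing the role of the fixed measure $P$. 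The monotonicity and integrability requirements on $\epsilon\mapsto n(\epsilon,\mathcal{F},\mathbb{P}_n)$ demanded by Lemma~\ref{expo1} are inherited from the polynomial covering-number bounds used in the applications and present no difficulty.
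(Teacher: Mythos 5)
Your proposal is correct and follows essentially the same route as the paper's proof: symmetrize via Lemma~\ref{Lemma:SymTau} with the random threshold $x(Z)=4kcK\,e_n(\mathcal{F},\mathbb{P}_n)$ and an $x_0$ supplied by Chebyshev's inequality from the variance hypothesis, then condition on the data, invoke Hoeffding's inequality to get sub-Gaussianity of $\mathbb{G}_n^o$ in $\|\cdot\|_{\mathbb{P}_n,2}$ with $D=1$, apply Lemma~\ref{expo1} with $P=\mathbb{P}_n$, cap the conditional probability at $1$, and integrate over the data using $\bar p_\tau\geq\tau$ and $p_\tau\leq\tau$. The technical caveats you flag (sub-exchangeability of the threshold, deterministic centering $\mu_i$, and the conditional versus unconditional roles of the hypotheses) are exactly the points the paper's argument relies on.
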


Finally, our main result in this section is as follows.

\begin{lemma}[Maximal Inequality for a Collection of Empirical Processes]\label{master lemma} Consider a collection of separable empirical processes $\mathbb{G}_n(f) = n^{-1/2} \sum_{i=1}^n \{f(Z_i) - \Ep[f(Z_i)]\}$, where $Z_1,\ldots,Z_n$ is an underlying i.i.d. data sequence, defined over function classes $\mathcal{F}_m, m=1,\ldots,N$ with envelopes $F_m = \sup_{f \in \mathcal{F}_m} |f(x)|, m=1,\ldots,N$, and with upper bounds on the uniform covering numbers of $\mathcal{F}_m$ given for all $m$ by
$$
n(\epsilon, \mathcal{F}_m,\mathbb{P}_n) =  (N \vee n\vee \bm)^m (
\omega/\epsilon )^{\upsilon m},  \ 0 < \epsilon < 1,
$$
with some constants $\omega>1$, $\upsilon>1$, and $\theta_m \geq \theta_0$. For a constant
$C:=(1+\sqrt{2\upsilon})/4$ set
$$
e_n(\mathcal{F}_m, \mathbb{P}_n) = C\sqrt{m \log (N \vee n \vee \bm \vee
\omega)}\max\left\{ \sup_{f \in \mathcal{F}_m}
\|f\|_{\mathbb{P},2}, \ \sup_{f \in \mathcal{F}_m}
\|f\|_{\mathbb{P}_n,2} \right\}. $$ Then, for any $\delta \in
(0,1/6)$, and any constant $K\geq \sqrt{2/\delta}$ we have
$$
\sup_{f \in \mathcal{F}_m} | \mathbb{G}_n(f)| \leq  4\sqrt{2}c K
e_n(\mathcal{F}_m, \mathbb{P}_n) , \text{ for all } m \leq N,
$$
with probability at least $1-\delta$, provided that $N \vee n \vee \theta_0 \geq 3$; the constant $c$
is the same as in Lemma \ref{expo1}.
\end{lemma}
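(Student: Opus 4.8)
Here $L_m:=\log(n\vee\theta_m\vee\omega)$ and $\rho_m:=\rho(\mathcal{F}_m,\Pn)\le1$. The plan is to apply Lemma \ref{expo3} \emph{separately} to each class $\mathcal{F}_m$, using the data-dependent threshold $e_n(\mathcal{F}_m,\Pn)$ (which is $\sqrt2$-sub-exchangeable, being built from $\|\cdot\|_{\Pn,2}$-type quantities as in the remark following Lemma \ref{Lemma:SymTau}), sub-exchangeability constant $k=\sqrt2$, and a class-dependent confidence level $\tau=\tau_m$ to be chosen; then union-bound the failure events over $m=1,\dots,n$. Since enlarging $K$ only weakens the conclusion (it only inflates the right-hand side), I may take $K=\sqrt{2/\delta}$, so that $K^2-1\ge 11$ because $\delta<1/6$.

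The first step is to verify the entropy-integral hypothesis of Lemma \ref{expo3} for $\mathcal{F}_m$. Writing $\log n(\epsilon,\mathcal{F}_m,\Pn)=m\log(n\vee\theta_m)+\upsilon m\log(\omega/\epsilon)$ and using $\sqrt{a+b}\le\sqrt a+\sqrt b$, I would bound $\sqrt{\log n(\epsilon,\mathcal{F}_m,\Pn)}\le\sqrt{m\log(n\vee\theta_m)}+\sqrt{\upsilon m}\sqrt{\log(\omega/\epsilon)}$, integrate over $(0,\rho_m/4)$ using the elementary estimate $\int_0^a\sqrt{\log(\omega/\epsilon)}\,d\epsilon\lesssim a\sqrt{\log(\omega/a)}$, and multiply by $\|F_m\|_{\Pn,2}$, obtaining a bound of order $\sqrt{mL_m}\cdot\|F_m\|_{\Pn,2}\rho_m$. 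Since $\|F_m\|_{\Pn,2}\rho_m=\sup_{f\in\mathcal{F}_m}\|f\|_{\Pn,2}\le\max\{\sup_f\|f\|_{\mathbb{P},2},\sup_f\|f\|_{\Pn,2}\}$, the constant $C=(1+\sqrt{2\upsilon})/4$ is precisely the one that makes this collapse to $\le e_n(\mathcal{F}_m,\Pn)$.

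The second step is to check the variance hypothesis and evaluate the probability bound in Lemma \ref{expo3}. Because $\sup_{f\in\mathcal{F}_m}{\rm var}_{\mathbb{P}}f\le(\sup_f\|f\|_{\mathbb{P},2})^2\le\bigl(e_n(\mathcal{F}_m,\Pn)/(C\sqrt{mL_m})\bigr)^2$, the ratio $\sup_f{\rm var}_{\mathbb{P}}f/(4kcK e_n(\mathcal{F}_m,\Pn))^2$ is at most $\bigl(16k^2c^2K^2C^2 mL_m\bigr)^{-1}$, so the variance condition holds once $\tau_m\ge\bigl(8k^2c^2K^2C^2 mL_m\bigr)^{-1}$. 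For the exponential term, direct integration gives
$$\int_0^{\rho_m/2}\epsilon^{-1}n(\epsilon,\mathcal{F}_m,\Pn)^{-(K^2-1)}\,d\epsilon=\frac{(n\vee\theta_m)^{-m(K^2-1)}\,\bigl(\rho_m/(2\omega)\bigr)^{\upsilon m(K^2-1)}}{\upsilon m(K^2-1)}\le(n\vee\theta_m)^{-m(K^2-1)}\le 3^{-m(K^2-1)},$$
using $\rho_m\le1$, $\omega>1$, $\upsilon m(K^2-1)>1$, and $n\vee\theta_0\ge3$. Hence Lemma \ref{expo3} yields, for each $m$, $P\bigl(\sup_{f\in\mathcal{F}_m}|\Gn(f)|\ge 4\sqrt2\,cK\,e_n(\mathcal{F}_m,\Pn)\bigr)\le (4/\tau_m)\,3^{-m(K^2-1)}+\tau_m$.

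The third step is to calibrate $\tau_m$ and sum. Taking $\tau_m$ at the low end of its admissible range, proportional to $\bigl(K^2 mL_m\bigr)^{-1}$, the ``$+\tau_m$'' contributions sum to $\sum_{m\le n}\tau_m\lesssim K^{-2}(\log n)/L_0$, which is $\lesssim\delta$ because $L_m\ge\log n$ absorbs the harmonic sum $\sum 1/m$ and $K^2\ge2/\delta$, while the ``$(4/\tau_m)3^{-m(K^2-1)}$'' contributions sum to $\lesssim K^2 3^{-(K^2-1)}$, which is $\lesssim\delta$ since the geometric factor crushes the $O(mL_m)$ growth of $1/\tau_m$ uniformly in $m$ and, with $K^2=2/\delta$, $K^2 3^{-(K^2-1)}$ vanishes faster than $\delta$. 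A union bound over $m=1,\dots,n$ then gives the claim with probability at least $1-\delta$. The main obstacle is exactly this last calibration: the variance condition of Lemma \ref{expo3} forces $\tau_m=\Omega(1/(K^2 m))$ from below, whereas the union bound wants $\sum_m\tau_m$ small, and reconciling the two relies on both the largeness of $K^2-1$ and the inequality $L_m\ge\log n$, together with a mild lower bound on the universal constant $c$ of Lemma \ref{expo1} relative to $C$.
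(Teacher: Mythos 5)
Your strategy is the same as the paper's: apply Lemma \ref{expo3} to each class $\mathcal{F}_m$ separately with the data-dependent threshold $e_n(\mathcal{F}_m,\Pn)$, $k=\sqrt2$, and a class-dependent level $\tau_m\asymp \delta/(m\log(\cdot))$, check that the entropy integral collapses to $e_n(\mathcal{F}_m,\Pn)$ with the stated constant $C$, and union-bound over $m\le n$. Two auxiliary facts that you assert are actually the content of the paper's ``Step 2'' and do need proof: the $\sqrt2$-sub-exchangeability of $\sup_{f\in\mathcal{F}_m}\|f\|_{\Pn,2}$ (it is not literally the map $v\mapsto\|v\|$ covered by the remark after Lemma \ref{Lemma:SymTau}), and, more importantly, the lower bound $\rho(\mathcal{F}_m,\Pn)\ge 1/\sqrt n$. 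Without the latter your estimate $\int_0^{a}\sqrt{\log(\omega/\epsilon)}\,d\epsilon\lesssim a\sqrt{\log(\omega/a)}$ with $a=\rho_m/4$ does not reduce to $a\sqrt{2\log(n\vee\omega)}$, and you cannot land on the exact constant $C=(1+\sqrt{2\upsilon})/4$ that the definition of $e_n$ requires (a ``$\lesssim$'' is not enough here, since $e_n$ is fixed in the statement).

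The one step that does not close as written is the final summation. After you replace the covering-number integral by $3^{-m(K^2-1)}$, the first sum becomes $\sum_m (4/\tau_m)\,3^{-m(K^2-1)}\asymp \sum_m K^2 m L_m\, 3^{-m(K^2-1)}$ with $L_m=\log(n\vee\theta_m\vee\omega)$, and the claim that ``the geometric factor crushes the $O(mL_m)$ growth'' is false uniformly in the parameters: $L_m$ can be arbitrarily large for fixed $m$ (through $\omega$, $\theta_m$, or $n$), while $3^{-m(K^2-1)}$ is a fixed constant for each $m$. You must not discard the factors $\omega^{-\upsilon m(K^2-1)}$ and $(n\vee\theta_m)^{-m(K^2-1)}$ before multiplying by $1/\tau_m$; they are exactly what absorbs $L_m$. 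The paper's cleaner route is to put only $\log(n\vee\theta_0)$ into $\tau_m$ (taking $\tau_m=\delta/(4m\log(n\vee\theta_0))$, which still clears the variance lower bound), so that the surviving logarithm is matched against $(n\vee\theta_0)^{-m(K^2-1)}$ with $n\vee\theta_0\ge 3$ and $K^2-1\ge 1/\delta$ --- this is precisely where the hypotheses $\delta\in(0,1/6)$ and $n\vee\theta_0\ge3$ enter. Relatedly, the conclusion needs probability at least $1-\delta$, not $1-O(\delta)$, so the proportionality constant in $\tau_m$ must be pinned down; you correctly identify this tension in your last sentence but do not resolve it.
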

\begin{proof}[Proof of Lemma \ref{expo1}] The strategy of the proof is similar to the proof of Lemma 19.34 in \cite{vdV01}, page 286 given for the expectation
of a supremum of a process; here we instead bound tail
probabilities and also compute all constants explicitly.

Step 1.  There exists   a sequence of nested partitions of
$\mathcal{F}$,
 $\{
 ( \mathcal{F}_{qi}, i =1,\ldots, N_q), q= q_0, q_0 + 1, \ldots
 \}$
 where the $q$-th partition consists of sets of $L_2(P)$ radius at most $\|F\|_{P,2} 2^{-q}$, and $q_0$ is the largest positive integer such that $ 2^{-q_0} \leq \rho(\mathcal{F},P)/4$ so that $q_0 \geq 2$. The existence of such a partition follows from a standard argument, e.g. \cite{vdV01}, page 286.

Let $f_{qi}$ be an arbitrary point of $\mathcal{F}_{qi}$. Set
$\pi_q (f) = f_{qi}$ if $f \in  \mathcal{F}_{qi}$. By separability
of the process, we can replace $\mathcal{F}$ by $\cup_{q,i}
f_{qi}$, since the supremum norm of the process can be computed by
taking this set only. In this case, we can decompose $f -
\pi_{q_0}(f) = \sum_{q =q_0+1}^{\infty} (\pi_{q}(f) -
\pi_{q-1}(f))$. Hence by linearity $ \mathbb{G}(f) -
\mathbb{G}(\pi_{q_0}(f))  = \sum_{q
=q_0+1}^{\infty} \mathbb{G}(\pi_{q}(f) - \pi_{q-1}(f)),$
so that \begin{eqnarray*}
 \mathbb{P}\Big\{ \sup_{f \in \mathcal{F}}|\mathbb{G}(f) |  >  \sum_{q =q_0}^{\infty} \eta_q \Big \}  & \leq &   \sum_{q ={q_0+1}}^{\infty}
\mathbb{P}\left\{ \max_{f} |\mathbb{G}(\pi_{q}(f) -\pi_{q-1}(f))|
>  \eta_q \right\} \\
& + & \mathbb{P}\left\{ \max_{f}
|\mathbb{G}(\pi_{q_0}(f))| >  \eta_{q_0} \right\},
\end{eqnarray*}for constants $\eta_q$ chosen below.

Step 2.  By construction of the partition sets $\|\pi_{q}(f) -
\pi_{q-1}(f)\|_{P,2} \leq 2 \|F\|_{P,2} 2^{-(q-1)} \leq  4
\|F\|_{P,2} 2^{-q}, \ \mbox{for} \ q \geq q_0+1.$ Setting  $\eta
_{q} = 8 K  \|F\|_{P,2} {2^{-q} } \sqrt{  \log  N_q}, $ using
sub-Gaussianity, setting $K>D$, using that $ 2\log N_q \geq \log
N_{q} N_{q-1} \geq \log n_q $, using that $q \mapsto \log n_q$ is
increasing in $q$, and $2^{-q_0} \leq \rho(\mathcal{F}, P)/4$,  we
obtain \begin{eqnarray*} & & \sum_{q =q_0+1}^{\infty}
\mathbb{P}\left\{ \max_{f} |\mathbb{G}(\pi_{q}(f) -\pi_{q-1}(f))|
>  \eta_q \right\}  \leq
\sum_{q =q_0+1}^{\infty} N_{q}  N_{q-1} 2\exp \left( -\eta_q ^{2}/(4 D  \|F\|_{P,2}2^{-q})^{2}\right) \\
 & & \leq   \sum_{q =q_0+1}^{\infty} N_{q}  N_{q-1}2\exp \left( -  (K/D)^{2}  2 \log N_q\right)   \leq   \sum_{q =q_0+1}^{\infty} 2\exp \left( - \{ (K/D)^{2} -1 \} \log n_q  \right) \\
&  & \leq   \int_{q_0}^{\infty} 2\exp \left( - \{ (K/D)^{2} -1 \}
\log n_{q}  \right) dq =  \int_{0}^{\rho(\mathcal{F},P)/4}  (x\ln
2)^{-1}  2 n(x, \mathcal{F},P)^{- \{ (K/D)^{2} -1 \}} dx.
 \end{eqnarray*}
By Jensen's inequality we have  $ \sqrt{\log N_q} \leq a_q:=\sum_{j=q_0}^q
\sqrt{\log n_j}$, so that we obtain $\sum_{q =q_0+1}^{\infty} \eta_q$  $\leq$
$8 \sum_{q =q_0+1}^{\infty} K \|F\|_{P,2} 2^{-q} a_q.$ Letting
$b_q = 2 \cdot 2^{-q}$, noting  $a_{q+1} - a_{q} = \sqrt{ \log
n_{q+1} }$ and $b_{q+1} - b_q = - 2^{-q}$, we get using summation
by parts \begin{eqnarray*}  & & \sum_{q =q_0+1}^{\infty}
2^{-q} a_q
    =    - \sum_{q =q_0+1}^{\infty}  (b_{q+1} - b_q ) a_q
     = - a_q b_{q}|^{\infty}_{q_0+1} + \sum_{q =q_0+1}^\infty b_{q+1} (a_{q+1}-a_q) \\
    &&   =   2 \cdot 2^{-(q_0+1)} \sqrt{ \log n_{q_0+1} }  +  \sum_{q =q_0+1}^{\infty} 2 \cdot 2^{-(q+1)} \sqrt{ \log n_{q+1} }
     = 2 \sum_{q =q_0+1}^{\infty}  2^{-q} \sqrt{ \log n_{q} },
 \end{eqnarray*}
where we use the assumption that $2^{-q }\sqrt{ \log  n_q} \to 0$
as $q \to \infty$, so that $- a_q b_{q}|^{\infty}_{q_0+1} = 2
\cdot 2^{-(q_0+1)} \sqrt{ \log n_{q_0+1} }$.  Using that $ 2^{-q}
\sqrt{ \log n_{q} }$ is decreasing in $q$ by assumption,
$$
2 \sum_{q =q_0+1}^{\infty}  2^{-q} \sqrt{ \log n_{q} }
 \leq 2\int_{q_0}^{\infty} 2^{-q} \sqrt{\log  n (2^{-q}, \mathcal{F},P)} d q.
$$
Using a change of variables and that $2^{-q_0} \leq
\rho(\mathcal{F}, P)/4$, we finally conclude that
$$
\sum_{q =q_0+1}^{\infty} \eta_q  \leq K \|F\|_{P,2}\frac{16 }{\log
2} \int_{0}^{\rho(\mathcal{F},P)/4} \sqrt{\log  n (x,
\mathcal{F},P)} d x.
$$

Step 3. Letting $\eta_{q_0} = K \|F\|_{P,2} \rho(\mathcal{F},P)
\sqrt{2 \log N_{q_0} } $, recalling that $N_{q_0} = n_{q_0}$,
using that $\|\pi_{q_0}(f)\|_{P,2} \leq  \|F\|_{P,2}$ and
sub-Gaussianity, we conclude  \begin{eqnarray*}
& & \mathbb{P}\Big\{  \max_{f} |\mathbb{G}(\pi_{q_0}(f) )| >  \eta_{q_0} \Big \} \leq   n_{q}  2\exp \left( -  (K/D)^{2} \log n_q\right)   \leq    2\exp \left( - \{ (K/D)^{2} -1 \} \log n_q  \right) \\
& & \leq   \int_{q_0-1}^{q_0} 2\exp \left( - \{ (K/D)^{2} -1 \}
\log n_{q}  \right) dq =
\int_{\rho(\mathcal{F},P)/4}^{\rho(\mathcal{F},P)/2}  (x\ln
2)^{-1}  2 n(x, \mathcal{F},P)^{- \{ (K/D)^{2} -1 \}} dx.
 \end{eqnarray*}
Also, since $n_{q_0} = n(2^{-q_0}, \mathcal{F},P)$, $2^{-q_0} \leq
\rho(\mathcal{F},P)/4$, and $n(x, \mathcal{F},P)$ is increasing in
$1/x$, we obtain
 $
\eta_{q_0} \leq 4 \sqrt{ 2} K \|F\|_{P,2} \int_{0}^{
\rho(\mathcal{F},P)/4}  \sqrt{\log n(x, \mathcal{F},P)} dx. $

Step 4. Finally, adding the bounds on tail probabilities from
Steps 2 and 3 we obtain the tail bound stated in the main text.
Further, adding bounds on $\eta_q$ from Steps 2 and 3, and using
$ c= 16/{\log 2} + 4 \sqrt{2} < 30$,  we obtain $ \sum_{q
=q_0}^{\infty} \eta_q  \leq c K \|F\|_{P,2}
\int_{0}^{\rho(\mathcal{F},P)/4} \sqrt{\log  n (x, \mathcal{F},P)} d
x. $

\end{proof}

\begin{proof}[Proof of Lemma \ref{Lemma:SymTau}]
The proof proceeds analogously to the proof of Lemma 2.3.7 (page
112) in \cite{vdV-W} with the necessary adjustments. Letting
$q_\tau$ be the $\tau$ quantile of $x(Z)$ we have
$$ P\left \{ \left\| \sum_{i=1}^n Z_i \right\|_{\mathcal{F}} > x_0 \vee x(Z) \right \} \leq P \left \{ x(Z) \geq q_\tau, \left\| \sum_{i=1}^n Z_i \right\|_{\mathcal{F}} > x_0 \vee x(Z) \right \} + P\{ x(Z) < q_\tau \}.$$ Next we bound the first term of the expression above. Let $Y=(Y_1,\ldots,Y_n)$ be an independent copy of $Z=(Z_1,\ldots,Z_n)$, suitably defined on a product space. Fix a realization of $Z$ such that $x(Z) \geq q_\tau$ and $\left\| \sum_{i=1}^n Z_i\right\|_{\mathcal{F}} > x_0 \vee x(Z)$. Therefore $\exists f_Z \in \mathcal{F}$ such that $\left| \sum_{i=1}^n Z_i(f_Z) \right| > x_0 \vee x(Z)$. Conditional on such a $Z$ and using the triangular inequality we have that
$$\begin{array}{rcl}
 P_Y\left\{ x(Y) \leq q_\tau, \left| \sum_{i=1}^n Y_i(f_Z) \right| \leq \frac{x_0}{2} \right\} & \leq & P_Y\left\{ \left| \sum_{i=1}^n (Y_i - Z_i)(f_Z)\right| > \frac{x_0 \vee x(Z) \vee x(Y)}{2}\right\}\\
 & \leq & P_Y\left\{ \left\| \sum_{i=1}^n (Y_i - Z_i)\right\|_{\mathcal{F}} > \frac{x_0 \vee x(Z) \vee x(Y)}{2}\right\}.
 \end{array}$$
By definition of $x_0$ we have
$\inf_{f\in\mathcal{F}}P\left\{\left| \sum_{i=1}^n Y_i(f) \right|
\leq \frac{x_0}{2} \right\} \geq 1 - \bar{p}_\tau /2$. Since
$P_Y\left\{ x(Y) \leq q_\tau\right\} = \bar{p}_\tau$, by
Bonferroni inequality we have that the left hand side is bounded
from below by $\bar{p}_\tau - \bar{p}_\tau/2 = \bar{p}_\tau/2$.
Therefore, over the set $\{Z : x(Z) \geq q_\tau, \left\|
\sum_{i=1}^n Z_i \right\|_{\mathcal{F}} > x_0 \vee x(Z)\}$ we have
$\frac{\bar{p}_\tau}{2} \leq P_Y\left\{ \left\| \sum_{i=1}^n (Y_i
- Z_i)\right\|_{\mathcal{F}} > \frac{x_0 \vee x(Z) \vee
x(Y)}{2}\right\}.$ Integrating over $Z$ we obtain {\small $$
\frac{\bar{p}_\tau}{2} P\left\{ x(Z) \geq q_\tau, \left\|
\sum_{i=1}^n Z_i \right\|_{\mathcal{F}} > x_0 \vee x(Z)\right\}
\leq P_ZP_Y\left\{ \left\| \sum_{i=1}^n (Y_i -
Z_i)\right\|_{\mathcal{F}} > \frac{x_0 \vee x(Z) \vee
x(Y)}{2}\right\}.$$} Let $\varepsilon_1,\ldots,\varepsilon_n$ be
an independent sequence of Rademacher random variables.   Given
$\varepsilon_1,\ldots,\varepsilon_n$, set $(\tilde Y_i = Y_i,
\tilde Z_i = Z_i)$ if $\varepsilon_i =1$ and $(\tilde Y_i = Z_i,
\tilde Z_i = Y_i)$ if $\varepsilon_i=-1$. That is, we create
vectors $\tilde Y$ and $\tilde Z$ by pairwise exchanging their
components; by construction, conditional on  each
$\varepsilon_1,\ldots,\varepsilon_n$, $(\tilde Y, \tilde Z)$ has
the same distribution as $(Y, Z)$.  Therefore, {\small $$
P_ZP_Y\left\{ \left\| \sum_{i=1}^n (Y_i -
Z_i)\right\|_{\mathcal{F}} > \frac{x_0 \vee x(Z) \vee
x(Y)}{2}\right\} = \Ep_\varepsilon P_ZP_Y\left\{ \left\|
\sum_{i=1}^n (\tilde Y_i - \tilde Z_i)\right\|_{\mathcal{F}} >
\frac{x_0 \vee x(\tilde Z) \vee x(\tilde Y)}{2}\right\}.$$} \!By
$x(\cdot)$ being $k$-sub-exchangeable, and since
$\varepsilon_i(Y_i-Z_i) = (\tilde Y_i - \tilde Z_i)$, we have that
{\small $$ \Ep_\varepsilon P_ZP_Y\left\{ \left\| \sum_{i=1}^n
(\tilde Y_i - \tilde Z_i)\right\|_{\mathcal{F}} > \frac{x_0 \vee
x(\tilde Z) \vee x(\tilde Y)}{2}\right\} \leq \Ep_\varepsilon
P_ZP_Y\left\{ \left\| \sum_{i=1}^n \varepsilon_i(Y_i -
Z_i)\right\|_{\mathcal{F}} > \frac{x_0 \vee x(Z) \vee
x(Y)}{2k}\right\}.$$} By the triangular inequality and removing
$x(Y)$ or $x(Z)$, the latter is bounded by
$$ P\left\{ \left\| \sum_{i=1}^n \varepsilon_i(Y_i - \mu_i)\right\|_{\mathcal{F}} > \frac{x_0  \vee x(Y)}{4k}\right\} + P\left\{ \left\| \sum_{i=1}^n \varepsilon_i(Z_i - \mu_i)\right\|_{\mathcal{F}} > \frac{x_0 \vee x(Z)}{4k} \right\}.$$
\end{proof}

\begin{proof}[Proof of Lemma \ref{expo3}]


Let $\mathbb{G}^o_n(f) =
n^{-1/2} \sum_{i=1}^n \{\varepsilon_i f(Z_i)\}$ be the symmetrized empirical process, where
$\varepsilon_1,\ldots,\varepsilon_n$ are i.i.d. Rademacher random
variables, i.e., $P(\varepsilon_i = 1) = P(\varepsilon_i = -1) =
1/2$, which are independent of $Z_1,\ldots,Z_n$.
By the Chebyshev's inequality and the assumption on
$e_n(\mathcal{F},\Pn)$, namely $ \sup_{f \in \mathcal{F}}\text{var}_{\mathbb{P}}f  \leq (\tau/2)(4kcK e_n(\mathcal{F},\Pn))^2$, we have for the constant $\tau$ fixed in
the statement of the lemma
$$ P(|\mathbb{G}_n(f)|> 4kcKe_n(\mathcal{F}, \mathbb{P}_n) \ ) \leq \tau/2.
$$
Therefore, by the symmetrization Lemma \ref{Lemma:SymTau}  we
obtain
$$
\mathbb{P}\left\{\sup_{f \in \mathcal{F}} |\mathbb{G}_n(f)|> 4k c
Ke_n(\mathcal{F}, \mathbb{P}_n)\right\} \leq \frac{4}{\tau}
\mathbb{P}\left\{\sup_{f \in \mathcal{F}} | \mathbb{G}_n^o(f)| > c
Ke_n(\mathcal{F}, \mathbb{P}_n)  \right\} + \tau.
$$ We then condition on the values of $Z_1,\ldots,Z_n$, denoting the conditional probability measure as $\mathbb{P}_{\varepsilon}$. Conditional on $Z_1,\ldots,Z_n$, by the Hoeffding inequality the symmetrized process $\mathbb{G}_n^o$ is sub-Gaussian for the $L_2(\mathbb{P}_n)$ norm, namely, for $g \in \mathcal{F}-\mathcal{F}$,
$ \mathbb{P}_{\varepsilon}\{\mathbb{G}_n^o(g) >x \} \leq 2 \exp( -
x^2/[2\|g\|_{\mathbb{P}_n,2}^2]).$ Hence by Lemma \ref{expo1} with
$D=1$, we can bound

$$
\mathbb{P}_{\varepsilon}\left\{\sup_{f \in \mathcal{F}}
|\mathbb{G}_n^o(f)| \geq cKe_n(\mathcal{F}, \mathbb{P}_n)\right\}
\leq   \left[\int_{0}^{\rho(\mathcal{F}, \mathbb{P}_n)/2}
\epsilon^{-1} n (\epsilon, \mathcal{F},P)^{-\{ K^{2} -1 \}}
d\epsilon  \right]\wedge 1.
$$
The result follows from taking the expectation over
$Z_1,\ldots,Z_n$.
\end{proof}

\begin{proof}[Proof of Lemma \ref{master lemma}]
Step 1. (Main Step)  In this step we prove the main result. First, we observe
that the bound $\epsilon \mapsto n(\epsilon, \mathcal{F}_m, \mathbb{P}_n)$
satisfies the monotonicity hypotheses of Lemma \ref{expo3}
uniformly in $m \leq N$.

Second, recall $e_n(\mathcal{F}_m, \mathbb{P}_n) := C
\sqrt{m\log(N\vee n\vee \bm \vee \omega )} \max \{ \sup_{f \in \mathcal{F}_m}
\|f\|_{\mathbb{P},2}, \  \sup_{f \in \mathcal{F}_m}
\|f\|_{\mathbb{P}_n,2}\}$ for $C = (1+\sqrt{2\upsilon})/4$. Note
that  $\sup_{f \in \mathcal{F}_m}\|f\|_{\mathbb{P}_n,2}$ is
$\sqrt{2}$-sub-exchangeable and $\rho(\mathcal{F}_m, \mathbb{P}_n)
:= \sup_{f \in \mathcal{F}_m} \| f\|_{\mathbb{P}_n,2}/
\|F_m\|_{\mathbb{P}_n, 2} \geq 1/\sqrt{n}$ by Step 2 below.
 Thus, uniformly  in $m \leq N$:
  \begin{eqnarray*}
& &  \|F_m\|_{\mathbb{P}_n,2} \int_{0}^{\rho(\mathcal{F}_m,\mathbb{P}_n)/4}  \sqrt{\log  n (\epsilon, \mathcal{F}_m,\mathbb{P}_n)} d \epsilon   \\
 & & \leq  \displaystyle  \|F_m\|_{\mathbb{P}_n,2} \int_{0}^{\rho(\mathcal{F}_m,\mathbb{P}_n)/4} \sqrt{m\log (N\vee n \vee \bm)  + \upsilon m \log(\omega / \epsilon)} d \epsilon  \\
& & \leq  \displaystyle (1/4)\sqrt{m\log (N\vee n \vee \bm)}\sup_{f \in \mathcal{F}_m} \| f\|_{\mathbb{P}_n,2}  +  \|F_m\|_{\mathbb{P}_n,2} \int_{0}^{\rho(\mathcal{F}_m,\mathbb{P}_n)/4}\sqrt{\upsilon m\log(\omega / \epsilon)}d\epsilon \\
& &  \leq  \displaystyle \sqrt{m\log (N\vee n \vee \bm \vee
\omega)}\sup_{f \in \mathcal{F}_m} \| f\|_{\mathbb{P}_n,2}\( 1 +
\sqrt{2\upsilon} \)/4 \leq  e_n(\mathcal{F}_m, \mathbb{P}_n),
\end{eqnarray*} which follows by $\int_0^{\rho}
\sqrt{\log(\omega/\epsilon)} d\epsilon \leq \left( \int_0^{\rho} 1
d\epsilon \right)^{1/2}\left(\int_0^{\rho} \log(\omega/\epsilon)
d\epsilon \right)^{1/2}
\leq \rho \sqrt{2\log (n \vee \omega)},$ for  $1/\sqrt{n} \leq
\rho \leq 1$.

Third, for any $K \geq  \sqrt{2/\delta} >1$ we have $(K^2 -1) \geq
1/\delta$, and let $\tau_m = \delta/(4m\log (N\vee n\vee \theta_0))$. Recall that
$4\sqrt{2}cC>4$ where $4<c<30$ is defined in Lemma \ref{expo1}.
Note that for any $m\leq N$ and $f \in \mathcal{F}_m$, we have by
Chebyshev's inequality and since $e_n(\mathcal{F}_m, \mathbb{P}_n) \geq C\sqrt{m\log(N\vee n\vee \theta_m\vee \omega)}\sup_{f\in \mathcal{F}_m}\|f\|_{\mathbb{P},2}$
$$
P(|\mathbb{G}_n(f)|> 4\sqrt{2}cKe_n(\mathcal{F}_m, \mathbb{P}_n) \ )  
\leq
\frac{\delta/2}{(4\sqrt{2}cC)^2m\log (N \vee n \vee \theta_0)} \leq \tau_m/2.
$$

By Lemma \ref{expo3} with our choice of $\tau_m$, $m\leq N$,
$\omega
>1$, $\upsilon > 1$,  and $\rho(\mathcal{F}_m,\Pn)\leq 1$,
\begin{eqnarray*}  &&\mathbb{P}\Big \{ \sup_{f \in \mathcal{F}_m} |
\mathbb{G}_n(f)|  >   4 \sqrt{2}c K e_n(\mathcal{F}_m,
\mathbb{P}_n), \exists   m \leq N
\Big \} \\
&&  \leq  \sum_{m=1}^N \mathbb{P}\left\{ \sup_{f \in \mathcal{F}_m} | \mathbb{G}_n(f)| >  4\sqrt{2} c K e_n(\mathcal{F}_m, \mathbb{P}_n)\right\} \\
 &&  \leq   \sum_{m=1}^N \left[\frac{4(N\vee n \vee \bm)^{-m/\delta}}{\tau_m}\int_0^{1/2} (\omega/\epsilon)^{(-\upsilon m/\delta) + 1} d \epsilon + \tau_m \right]\\
&&  \leq  4\sum_{m=1}^N \frac{(N\vee n \vee \bm)^{-m/\delta}}{\tau_m} \frac{1}{\upsilon m/\delta} + \sum_{m=1}^N \tau_m   \\
 &&  <   16\frac{(N\vee n\vee \theta_0)^{-1/\delta}}{1-(N\vee n \vee \theta_0)^{-1/\delta}}\log (N\vee n \vee \theta_0) +
\frac{\delta}{4}\frac{(1+\log N)}{\log (N\vee n \vee \theta_0)} \leq  \delta,
\end{eqnarray*}
\!\!where the last inequality follows by $N\vee n \vee \theta_0 \geq 3$ and $\delta \in (0,1/6)$.

\comment{If $ (n\vee \theta_0) \geq e $ we have $\frac{1 + \log n}{2\log( n\vee \theta_0)} = \frac{\log(en)}{\log( n^2\vee \theta_0^2 )} \leq 1$ if $n\vee \theta_0 \geq e$.

If $ (n\vee\theta_0) \geq 3^\delta$ we have the denominator of the first term to be at least $2/3$. Then we will bound $ (n\vee \theta_0)^{-1/\delta} \log(n\vee \theta_0) \leq \delta/48$. Taking logs, it suffices to have $(1/\delta) \log(n\vee \theta_0) \geq \log 48 + \log \log((n\vee \theta_0)/\delta)$ which holds if $(1/\delta)\log(n\vee\theta_0) \geq 5.6$. So it suffices to have $\delta \in (0,1/6)$ and $(n\vee \theta_0)\geq 3$.}

Step 2. (Auxiliary calculations.) To establish that $\sup_{f \in
\mathcal{F}_m}\|f\|_{\mathbb{P}_n,2}$ is
$\sqrt{2}$-sub-exchangeable, let $\tilde Z$ and $\tilde Y$ be created
by exchanging any components in $Z$ with corresponding components in $Y$. Then
\begin{eqnarray*}
&& \text{$\sqrt{2}(\sup_{f \in \mathcal{F}_m}\|f\|_{\mathbb{P}_n(\tilde
Z),2} \vee \sup_{f \in \mathcal{F}_m}\|f\|_{\mathbb{P}_n(\tilde
Y),2})$  $\geq$  $(\sup_{f \in
\mathcal{F}_m}\|f\|_{\mathbb{P}_n(\tilde Z),2}^2 + \sup_{f \in
\mathcal{F}_m} \|f\|_{\mathbb{P}_n(\tilde Y),2}^2)^{1/2}$ } \\
&&\text{$\geq$  $(\sup_{f \in \mathcal{F}_m} \En[ f(\tilde Z_i)^2 ] + \En[ f(\tilde Y_i)^2 ])^{1/2}$
  $=$  $(\sup_{f \in \mathcal{F}_m} \En[ f(Z_i)^2] + \En[ f(Y_i)^2 ])^{1/2}$}\\
&& \text{$\geq$
  $(\sup_{f \in \mathcal{F}_m}\|f\|_{\mathbb{P}_n(Z),2}^2$ $\vee$ $\sup_{f \in \mathcal{F}_m} \|f\|_{\mathbb{P}_n(Y),2}^2)^{1/2}$$=$  $\sup_{f \in \mathcal{F}_m}\|f\|_{\mathbb{P}_n(Z),2}$ $\vee$ $\sup_{f \in \mathcal{F}_m}\|f\|_{\mathbb{P}_n(Y),2}.$}
\end{eqnarray*}

Next we show that $\rho(\mathcal{F}_m, \mathbb{P}_n) := \sup_{f
\in \mathcal{F}_m} \| f\|_{\mathbb{P}_n,2}/ \|F_m\|_{\mathbb{P}_n,
2} \geq 1/\sqrt{n}$ for $m\leq N$. The latter bound follows from $\En\[
F_m^2 \] = \En[ \sup_{f \in \mathcal{F}_m} |f(Z_i)|^2 ] $ $\leq$ $
\sup_{i\leq n} \sup_{f \in \mathcal{F}_m} |f(Z_i)|^2, $ and from
$\sup_{f \in \mathcal{F}_m} \En[|f(Z_i)|^2]  \geq  \sup_{f \in
\mathcal{F}_m} \sup_{i\leq n} |f(Z_i)|^2/n$.
\end{proof}



\section{Supplementary Material}\label{Supplementary Material}

Recall the notation for the unit sphere $\mathbb{S}^{n-1}
= \{ \alpha \in \RR^n : \|\alpha\| = 1 \}$ and the $k$-sparse unit
sphere $\mathbb{S}_p^k = \{ \alpha \in \RR^p : \|\alpha\|=1,
\|\alpha\|_0 \leq k \}$. Define also the sparse sphere associated with a given vector
$\beta$ as
$ \mathbb{S}(\beta) = \{ \alpha \in \RR^{p} : \|\alpha\| \leq 1, \supp (\alpha) \subseteq \supp(\beta) \}$.

\subsection{Proofs for Examples of Simple Sufficient Conditions}

In this section we provide the proofs for Lemmas 1 and 2 which show that two designs of interest imply conditions D.1-5 under mild assumptions. We restate the designs for the reader's convenience.

{\sc Design 1: Location Model with Correlated Normal Design}. Let us consider estimating a standard  location model
$$ y = x'\beta^o + \varepsilon, $$
where $\varepsilon \sim N(0,\sigma^2)$, $\sigma>0$ is fixed, $x=( 1, z' )'$,
with $z \sim N(0, \Sigma),$ where $\Sigma$ has ones in the diagonal, minimum eigenvalue bounded away from zero by constant $\kappa^2$ and maximum eigenvalue bounded from above, uniformly in $n$.

\begin{lemma}
Under Design 1 with $\mathcal{U} = [\xi,1-\xi]$, $\xi > 0$, conditions D.1-D.5 are satisfied with
$$\bar f = 1/[\sqrt{2\pi}\sigma], \ \ \bar f' = \sqrt{e/[2\pi]}/\sigma^2, \ \ \underf  = 1/\sqrt{2\pi\xi}\sigma, $$
$$\|\beta(u)\|_0 \leq\|\beta^o\|_0 + 1, \ \ \gamma = 2p\exp(-n/24), \ \ L =\sigma /\xi$$ $$ \kappa_m \wedge \widetilde \kappa_{\widetilde m} \geq \kappa,  \ \ q\wedge \widetilde q_{\widetilde m} \geq (3/[32 \xi^{3/4}])\sqrt{\sqrt{2\pi}\sigma/e} .$$
\end{lemma}
\begin{proof}
This model implies a linear quantile model with coefficients
$\beta_1(u) = \beta^o_1 + \sigma\Phi^{-1}(u)$ and $\beta_{j}(u)
= \beta^o_{j}$ for $j=2,\ldots,p$. Let
$$ \ \bar f' = \sup_z
\phi'(z/\sigma)/\sigma^2,  \ \bar f = \sup_{z}
\phi(z/\sigma)/\sigma, \ \ \underf = \min_{u \in
\mathcal{U}} \phi( \Phi^{-1}(u))/\sigma, $$
so that D.1 holds with the constants $\bar f$ and $\bar f'$.
 D.2 holds, since $\|\beta(u)\|_0  \leq \|\beta^o\|_0+1$ and $u \mapsto \beta(u)$ is Lipschitz over $\mathcal{U}$
with the constant $L= \sup_{u \in \mathcal{U}} \sigma /\phi(\Phi^{-1}(u))$, which  trivially obeys $\log L \lesssim \log (n \vee p)$.
D.4 also holds, in particular by Chernoff's  tail bound
$$ P \left\{\max_{1\leq j\leq p}  |\hat\sigma_j-1| \leq 1/2\right\} \geq 1-\gamma = 1-2p\exp(-n/24),$$ where $1-\gamma$ approaches 1 if $n/\log p \to \infty$.   Furthermore, the smallest eigenvalue of the population design matrix $\Sigma$
is at least $(1-|\rho|)/(2+2|\rho|)$ and the maximum eigenvalue is
at most $(1+|\rho|)/(1-|\rho|)$. Thus, D.4 and D.5 hold with
$$
\kappa_m \wedge \widetilde \kappa_{\widetilde m} \geq \kappa, \ \ 
$$
for all $m, \widetilde m \geq 0$. If the covariates $x$ have a log-concave density, then  $$q
\geq 3 \underf^{3/2} / (8K_\ell \bar f') \text{ for a universal constant
$K_\ell$ }.$$ In the case of normal variables you can take $K_\ell = 4/\sqrt{2\pi}$. The bound follows from $ \Ep[|x_i'\delta|^3]
\leq K_\ell \Ep[|x_i'\delta|^2]^{3/2} $ holding for log-concave $x$
for some universal constant $K_{\ell}$ by Theorem 5.22 of
\cite{LovaszVempala2007}. The bound for $\widetilde q_{\widetilde m}$ is the same.
\end{proof}

{\sc Design 2: Location-scale shift with bounded regressors}. Let us consider estimating a standard  location model
$$ y = x'\beta^o + \varepsilon \cdot x'\eta, $$
where $\varepsilon \sim F$ independent of $x$, with probability density function $f$. We assume that the population design matrix $\Ep[xx']$ has ones in the diagonal and has eigenvalues uniformly bounded away from zero and from above,  $x_1 =1$, $\max_{1\leq j\leq p} |x_{j}| \leq K_B$. Moreover, the vector $\eta$ is such that $0 < \upsilon \leq  x'\eta \leq \Upsilon < \infty$ for all values of $x$.

\begin{lemma}
Under Design 2 with $\mathcal{U} = [\xi,1-\xi]$, $\xi > 0$, conditions D.1-D.5 are satisfied with $$\bar f \leq \max_{\varepsilon} f(\varepsilon)/\upsilon, \ \ \bar f' \leq \max_\varepsilon f'(\varepsilon) / \upsilon^2, \ \ \underf =  \min_{u \in \mathcal{U}} f(F^{-1}(u))/\Upsilon, \ \ $$
$$\|\beta(u)\|_0 \leq\|\beta^o\|_0 + \|\eta\|_0 + 1, \ \ \gamma = 2p\exp(-n/[8K_B^4]), $$ $$ \kappa_m \wedge \widetilde \kappa_{\widetilde m} \geq \kappa, \ \ L =  \|\eta\| \underf$$ $$ q \geq
 \frac{3}{8} \frac{\underf^{3/2}}{\bar{f'}}  \kappa /[10 K_B \sqrt{s}],  \widetilde q_{\widetilde m} \geq   \frac{3}{8} \frac{\underf^{3/2}}{\bar{f'}}  \kappa /[ K_B \sqrt{s+\widetilde m}],$$
\end{lemma}

\begin{proof}
This model implies a linear quantile model with coefficients
$\beta(u) = \beta^o_1 + F^{-1}(u)\eta$. We have
$$ \ \bar f' = \max_y f'(y) / \upsilon^2,  \ \bar f = \max_{y} f(y)/\upsilon, \ \ \underf \geq \min_{u \in \mathcal{U}} f(F^{-1}(u))/\Upsilon, $$
so that D.1 holds with the constants $\bar f$ and $\bar f'$.
 D.2 holds, since $\|\beta(u)\|_0 \leq \|\beta^o\|_0 + \|\eta\|_0 + 1$ and $u \mapsto \beta(u)$ is Lipschitz over $\mathcal{U}$
with the constant $L= \|\eta\|\max_{u \in \mathcal{U}} \Upsilon /f(F^{-1}(u))$ uniformly in $n$, which  obeys $\log L \lesssim \log (n \vee p)$.
Next recall that $x_{ij}^2\leq K_B^2$. Then, by Hoeffding inequality we have
$$ P( |\En[ x_{ij}^2 - 1 ] | \geq 1/2 ) \leq 2\exp( -n/[8K_B^4]).$$
Applying a union bound D.3 holds with $\gamma = 2p\exp( -n/[8K_B^4])$ which approaches 0 if $n/\log p \to \infty$.   Furthermore, the smallest eigenvalue of the population design matrix is bounded away from zero. Thus, D.4 and D.5 hold with $c_0 = 9$ (in fact with any $c_0>0$) and
$$
\kappa_m \wedge \widetilde \kappa_{\widetilde m} \geq \sqrt{{\rm min eig}(\Ep[xx'])}, 
$$
for all $m, \widetilde m \geq 0$.

Finally, the restricted nonlinear impact coefficient satisfies
$q \geq 3 \underf^{3/2} \kappa_0 / (8 \bar f' K_B (1+c_0)
\sqrt{s} )$. Indeed, the latter bound follows from $\Ep[|x_i'\delta|^3] \leq  \Ep[|x_i'\delta|^2] K_B \|\delta\|_1 \leq
\Ep[|x_i'\delta|^2] K_B (1+c_0)\sqrt{s}\|\delta_{T_u}\| \leq
\Ep[|x_i'\delta|^2]^{3/2} K_B (1+c_0)\sqrt{s} / \kappa_0 $ holding
since $\delta \in A_u$ so that $\|\delta\|_1 \leq
(1+c_0)\|\delta_{T_u}\|_1 \leq
\sqrt{s}(1+c_0)\|\delta_{T_u}\|$. Similarly, one can show $ \widetilde q_{\widetilde m} \geq (3/8) \uglyf^{3/2}
\widetilde \kappa_{\widetilde m}/ (\bar f' K_B \sqrt{\widetilde m + s} )$.
\end{proof}

\subsection{Lemma 9: Proof of the VC index bound}

\begin{lemma}\label{Lemma:VC-F}
Consider a fixed subset $T \subset \{1,2,\ldots,p\}$, $|T|=m$. The
class of functions
$$
\mathcal{F}_T = \left\{ \alpha'( \psi_i(\beta,u) -
\psi_i(\beta(u),u)) \  : u \in \mathcal{U}, \alpha \in
\mathbb{S}(\beta), \supp(\beta) \subseteq T  \right\}
$$
 has a VC index bounded by $cm$ for some universal constant $c$.
\end{lemma}
\begin{proof} Consider the classes of functions $\mathcal{W}:= \{ x'\alpha :
\supp(\alpha) \subseteq T\}$ and $\mathcal{V}:= \{ 1\{y \leq
x'\beta \} : \supp(\beta) \subseteq T\}$ (for convenience let $Z =
(y,x)$), $\mathcal{Z} := \{ 1\{ y \leq x'\beta(u)\} : u \in
\mathcal{U}\}$. Since $T$ is fixed and has cardinality $m$, the VC
indices of  $\mathcal{W}$ and $\mathcal{V}$ are bounded by $m+2$;
see, for example, van der Vaart and Wellner \cite{vdV-W} Lemma
2.6.15. On the other hand, since $u\mapsto x'\beta(u)$ is
monotone, the VC index of $\mathcal{Z}$ is 1. Next consider $ f
\in \mathcal{F}_T$ which can be written in the form $f(Z) := g(Z)
( 1\{h(Z)\leq 0\} - 1\{p(Z)\leq 0\} )$ where $g \in \mathcal{W}$,
$1\{h \leq 0 \} \in \mathcal{V}$, and $1\{ p \leq 0\} \in
\mathcal{Z}$.  The VC index of $\mathcal{F}_T$ is by definition
equal to the VC index of the class of sets $\{(Z,t): f(Z) \leq
t\}, f \in \mathcal{F}_T, t \in \mathbb{R}$. We have that
$$
\begin{array}{rcl}
 \{ (Z,t) : f(Z) \leq t \} & = & \{ (Z,t) : g(Z)( 1\{h(Z)\leq 0\} - 1\{ p(Z) \leq 0\}) \leq t \}\\
 & = & \{ (Z,t) : h(Z) > 0, p(Z) > 0 ,\ t \geq 0 \} \ \cup \\
 & \cup & \{ (Z,t) : h(Z) \leq 0, p(Z) \leq 0, \ t \geq 0 \} \ \cup \\
& \cup & \{ (Z,t) : h(Z) \leq 0, p(Z) > 0, g(Z) \leq t \} \ \cup \\
& \cup & \{ (Z,t) : h(Z) > 0, p(Z) \leq 0, g(Z) \geq t \}. \\
\end{array}
$$
Thus each set $\{ (Z,t) : f(Z) \leq t \}$ is created by taking finite unions, intersections, and complements of the basic sets $\{Z: h(Z) > 0\}$,
$\{Z: p(Z) \leq 0\}$, $\{t \geq 0\}$, $\{(Z,t): g(Z) \geq t\}$, and $\{(Z,t): g(Z) \leq t\}$. These basic sets form VC classes, each having VC index of order $m$.
Therefore, the VC index of a class of sets $\{(Z,t): f(Z) \leq t\}, f \in \mathcal{F}_T, t \in \mathbb{R}$ is of the same order as the sum of the VC indices of the set classes formed by the basic VC sets; see van der Vaart and Wellner \cite{vdV-W} Lemma 2.6.17. 
\end{proof}

\subsection{Gaussian Sparse Eigenvalue}

It will be convenient to recall the following result.

\begin{lemma}\label{Lemma:SparseEigenvalue}
Let $M$ be a semi-definite positive matrix and  $ \phi_M(k) = \sup
\{ \ \alpha'M\alpha \ : \alpha \in \mathbb{S}_p^k \ \}$. For any
integers $k$ and $\ell k$ with $\ell \geq 1$,  we have $
\phi_M(\ell k) \leq \lceil \ell \rceil \phi_M( k).$\end{lemma}

The following lemmas characterize the behavior of the maximal sparse eigenvalue for the case of correlated Gaussian regressors. We start by establishing an upper bound on $\phi_{\En[x_ix_i']}(k)$ that holds with high probability.

\begin{lemma}\label{Lemma:SparseEigvenNormal} Consider $x_i = \Sigma^{1/2} z_i$, where $z_i \sim N(0,I_p)$, $p \geq n$, and $\sup_{\alpha \in \mathbb{S}_p^k} \alpha'\Sigma\alpha \leq \sigma^2(k)$. Let
$\phi_{\En[x_ix_i']}(k)$ be the maximal $k$-sparse eigenvalue of $\En \[x_i x_i'\]$, for $k \leq n$. Then  with probability converging to one, uniformly in $k \leq n$,
$$\sqrt{\phi_{\En[x_ix_i']}(k)} \lesssim \sigma( k) \(1+ \sqrt{k/n}\sqrt{\log p}\).$$
\end{lemma}

\begin{proof} By Lemma \ref{Lemma:SparseEigenvalue} it suffices to establish the result for $k \leq n/2$.   Let $Z$ be the $n\times p$ matrix collecting vectors $z_i'$, $i=1,\ldots,n$ as rows. \ Consider the  Gaussian process   $ \mathcal{G}_k: (\alpha, \tilde \alpha) \mapsto \tilde \alpha' Z\alpha/\sqrt{n}$, where $(\alpha, \tilde \alpha) \in \mathbb{S}^k_p\times \mathbb{S}^{n-1}$. Note that \begin{equation*}
 \|\mathcal{G}_k\|  = \sup_{(\alpha, \tilde \alpha) \in \mathbb{S}^k_p\times \mathbb{S}^{n-1}}  |\tilde \alpha' Z\alpha/\sqrt{n}| = \sup_{\alpha \in \mathbb{S}^k_p} \sqrt{ \alpha'\En [z_i z_i']\alpha} =\sqrt{\phi_{\En[z_iz_i']}(k)}.
 \end{equation*}
Using Borell's concentration inequality for the Gaussian process  (see van der Vaart and Wellner \cite{vdV-W} Lemma A.2.1) we have that $
P(\|\mathcal{G}_k\| - \text{median} \|\mathcal{G}_k\|>r ) \leq e^{ - n r^2/2}.$
Also, by classical results on the behavior of the maximal eigenvalues of the Gaussian covariance matrices (see German \cite{German1980}),  as $n \to \infty$, for any $k/n \to \gamma \in [0,1]$, we have that
 $
\lim_{k, n}  (\text{median} \|\mathcal{G}_k\|  - 1 - \sqrt{k/n}) = 0.$
Since $k/n$ lies within $[0,1]$, any sequence $k_n/n$ has convergent subsequence with limit in $[0,1]$. Therefore, we can conclude that, as $n \to \infty$,
 $
\limsup_{k_n, n} (\text{median} \|\mathcal{G}_{k_n}\|  - 1 - \sqrt{k_n/n}) \leq 0.$
This further implies
$
\limsup_n \sup_{k \leq n } (\text{median} \|\mathcal{G}_{k}\|  - 1 - \sqrt{k/n}) \leq 0.$
Therefore, for any $r_0>0$ there exists $n_0$ large enough such that for all $n \geq n_0$ and all $k \leq n$, $
P(\|\mathcal{G}_k\| > 1 + \sqrt{k/n} + r + r_0 ) \leq e^{ - n r^2/2}.$
There are at most $\binom{p}{k}$ subvectors of $z_i$ containing $k$ elements, so we conclude that for $n \geq n_0$,
$$
P ( \sup_{\alpha \in \mathbb{S}_p^k} \sqrt{\alpha' \En [z_i z_i']\alpha } > 1 + \sqrt{k/n} + r_k + r_0  ) \leq  \binom{p}{k} e^{ - n r_k^2/2}.
$$
Summing over $k \leq n$ we obtain
$$
\sum_{k=1}^n P ( \sup_{\alpha \in \mathbb{S}_p^k} \sqrt{\alpha' \En [z_i z_i']\alpha } > 1 + \sqrt{k/n} + r_k + r_0  ) \leq \sum_{k=1}^n \binom{p}{k}  e^{ - n r_k^2/2}.
$$
Setting $r_k = \sqrt{c k/n \log p}$ for $c>1$ and using that  $\binom{p}{k} \leq p^k$, we bound the right side by $\sum_{k=1}^n e^{ (1-c) k \log p} \to 0 $ as $n \to \infty$. We conclude that with probability converging to one, uniformly for all $k$:
$
 \sup_{\alpha \in \mathbb{S}_p^k} \sqrt{\alpha' \En [z_i z_i']\alpha } \lesssim 1 + \sqrt{k/n} \sqrt{\log p}.$
Furthermore, since  $\sup_{\alpha \in \mathbb{S}_p^k} \alpha'\Sigma\alpha \leq \sigma^2(k)$, we conclude that with probability converging to one, uniformly for all $k$:
$$
 \sup_{\alpha \in \mathbb{S}_p^k} \sqrt{\alpha' \En [x_i x_i']\alpha }\lesssim \sigma(k) ( 1 + \sqrt{k/n} \sqrt{\log p}).
$$
\end{proof}

Next, relying on Sudakov's minoration, we show a lower bound on the expectation of the maximum $k$-sparse eigenvalue.  We do not use the lower bound in the  analysis, but the result shows that the upper bound is sharp in terms of the rate dependence on $k, p$, and $n$.

\begin{lemma}\label{Lemma:Sudakov}  Consider $x_i = \Sigma^{1/2} z_i$, where $z_i \sim N(0,I_p)$, and $\inf_{\alpha \in \mathbb{S}_p^k} \alpha'\Sigma\alpha \geq \underline{\sigma}^2(k)$.  Let
$\phi_{\En[x_ix_i']}(k)$ be the maximal $k$-sparse eigenvalue of $\En\[ x_i x_i'\]$, for $k \leq n < p$. Then for any even $k$ we have that: $$ (1) \ \ \Ep \[\sqrt{\phi_{\En[x_ix_i']}(k)}\] \geq \frac{\underline{\sigma}(2k)}{3\sqrt{n}} \sqrt{{(k/2)} \log(p-k)} \text { and }$$
 $$(2) \ \ \sqrt{\phi_{\En[x_ix_i']}(k)} \gtrsim_P \frac{\underline{\sigma}(2k)}{3\sqrt{n}} \sqrt{{(k/2)} \log(p-k)}. $$

\end{lemma}
\begin{proof}  Let $X$ be the $n\times p$ matrix collecting vectors $x_i'$, $i=1,\ldots,n$ as rows. \ Consider the  Gaussian process   $ (\alpha, \tilde \alpha) \mapsto \tilde \alpha' X\alpha/\sqrt{n}$, where $(\alpha, \tilde \alpha) \in \mathbb{S}^k_p\times \mathbb{S}^{n-1}$. Note that $\sqrt{\phi_{\En[x_ix_i']}(k)}$ is the supremum of this Gaussian process \begin{equation}
\label{suprmum representation} \sup_{(\alpha, \tilde \alpha) \in \mathbb{S}^k_p\times \mathbb{S}^{n-1}}  |\tilde \alpha' X\alpha/\sqrt{n}| = \sup_{\alpha \in \mathbb{S}^k_p} \sqrt{ \alpha'\En [x_i x_i']\alpha} =\sqrt{\phi_{\En[x_ix_i']}(k)}.
 \end{equation}

 Hence we proceed in three steps: In Step 1, we consider the uncorrelated case and prove the lower bound (1) on the expectation of the supremum using Sudakov's minoration, using a lower bound on a relevant packing number. In Step 2, we derive the lower bound on the packing number.  In Step 3, we generalize Step 1 to the correlated case.  In Step 4, we prove the lower bound (2) on the supremum itself using Borell's concentration inequality.

Step 1.  In this step we consider the case where $\Sigma = I$ and show the result using Sudakov's minoration.
By fixing $\tilde \alpha = (1,\ldots,1)'/ \sqrt{n} \in \mathbb{S}^{n-1}$, we have $\sqrt{\phi_{\En[x_ix_i']}(k)} \geq \sup_{\alpha \in \mathbb{S}^k_p} \En[ x_i'\alpha] =   \sup_{\alpha \in \mathbb{S}^k_p} Z_{\alpha}$, where $\alpha \mapsto Z_{\alpha}: = \En[ x_i'\alpha]$ is a Gaussian process on $\mathbb{S}^k_p$.  We will bound $\displaystyle E[\sup_{\alpha \in \mathbb{S}^k_p} Z_{\alpha}]$ from below using Sudakov's minoration.

We consider the standard deviation metric on $\mathbb{S}^k_p$ induced  by $Z$: for any $t,s \in \mathbb{S}^k_p$,
$$ d(s,t) = \sqrt{\sigma^2( Z_t - Z_s )} =  \sqrt{\Ep\[ (Z_t - Z_s )^2 \]}= \sqrt{\Ep[ \En \[(x_i'(t - s))^2] \]}  =  \|t-s\|/\sqrt{n}.$$
Consider the packing number $D(\epsilon, \mathbb{S}^k_p, d)$, the largest number of disjoint closed balls of radius $\epsilon$ with respect to the metric $d$ that can be packed into $\mathbb{S}^k_p$, see \cite{Dudley2000}. We will bound the packing number from below for $\epsilon = \frac{1}{\sqrt{n}}$.  In order to do this  we restrict  attention to the collection $\mathcal{T}$ of elements $t =(t_1,\ldots,t_p) \in \mathbb{S}^k_p$ such that $t_i = 1/\sqrt{k}$ for exactly $k$ components and $t_i=0$ in the remaining $p-k$ components. There are $|T| = \binom{p}{k}$ of such elements. Consider any $s,t \in \mathcal{T}$ such that the support of $s$ agrees with the support of $t$ in at most ${k/2}$ elements. In this case
 \begin{equation}\label{EQ: diff} \|s - t\|^2 = \sum_{j=1}^p |t_j - s_j|^2 \geq \sum_{j \in \supp(t) \atop {\setminus \supp(s)}} \frac{1}{k} + \sum_{j \in \supp(s) \atop{\setminus \supp(t)}} \frac{1}{k} \geq 2 \frac{k}{2}\frac{1}{k} = 1.
 \end{equation}
Let $\mathcal{P}$ be the set  of the maximal cardinality, consisting of elements in $\mathcal{T}$ such that $|\supp(t)\setminus \supp(s)| \geq {k/2}$ for every $s,t \in \mathcal{P}$. By the inequality (\ref{EQ: diff}) we have that $ D(1/\sqrt{n}, \mathbb{S}^k_p, d) \geq | \mathcal{P}|.$  Furthermore, by Step 2 given below we have that $|\mathcal{P}|  \geq (p-k)^{{k/2}}$.

 Using Sudakov's minoration (\cite{Fernique1997}, Theorem 4.1.4), we conclude that
$$ \Ep\Big [ \sup_{t \in \mathbb{S}^k_p} Z_t \Big ] \geq  \sup_{\epsilon>0} \frac{\epsilon}{3} \sqrt{\log D(\epsilon, \mathbb{S}^k_p, d)} \geq  \sqrt{ \log D(1/\sqrt{n}, \mathbb{S}^k_p, d)} \geq \frac{1}{3} \sqrt{ k \log(p-k)/(2n)},  $$
proving the claim of the lemma for the case $\Sigma = I$.

Step 2. In this step we show that $|\mathcal{P}|  \geq (p-k)^{{k/2}}$.

It is convenient to identify every element $t \in \mathcal{T}$ with the set $\supp(t)$, where  $ \supp(t) = ( j \in (1,\ldots,p) : t_j = 1/\sqrt{k} )$, which has cardinality  $k$. For any $t\in \mathcal{T}$ let $\mathcal{N}(t) = ( s \in \mathcal{T} \ : \ |\supp(t) \setminus \supp(s)| \leq {k/2})$.   By construction  we have that $\max_{t \in \mathcal{T}}|\mathcal{N}(t)||\mathcal{P}| \geq  |\mathcal{T}|$.  Since as shown below $\max_{t \in \mathcal{T}}|\mathcal{N}(t)| \leq K := \binom{ k}{{k/2} } \binom{p-{k/2}}{{k/2}}$ for every $t$,  we conclude that $|\mathcal{P}| \geq  |\mathcal{T}|/K = \binom{p}{k}/K  \geq (p-k)^{{k/2}}$.

It remains only to show that $|\mathcal{N}(t)| \leq \binom{ k}{{k/2} } \binom{p-{k/2}}{{k/2}}$. Consider an arbitrary $t \in \mathcal{T}$. Fix any  ${k/2}$ components of $\supp(t)$, and generate elements $s \in \mathcal{N}(t)$ by switching any of the remaining ${k/2}$ components in $\supp(t)$ to any of the possible $p-{k/2}$ values. This gives us at most $\binom{p-{k/2}}{{k/2} }$ such elements $s \in \mathcal{N}(t)$. Next let us repeat this procedure for all other combinations of initial ${k/2}$ components of $\supp(t)$, where the number of such combinations is bounded by $\binom{ k}{{k/2} }$.  In this way we generate every element $s \in \mathcal{N}(t)$.   From the construction we conclude that $|\mathcal{N}(t)| \leq \binom{ k}{{k/2} } \binom{p-{k/2}}{{k/2}}$.

\text{Step 3}. The  case where $\Sigma \neq I$ follows similarly noting that the new metric, $ d(s,t) = \sqrt{\sigma^2( Z_t - Z_s )} =  \sqrt{\Ep\[ (Z_t - Z_s )^2 \]}$, satisfies $$ d(s,t) \geq \underline{\sigma}(2k) \|s-t\|/\sqrt{n} \ \mbox{ since} \ \  \|s-t\|_0 \leq 2k.$$

Step 4. Using  Borell's concentration inequality  (see van der Vaart and Wellner \cite{vdV-W} Lemma A.2.1)  for  the supremum of the Gaussian process defined in (\ref{suprmum representation}), we have
$
P(|\sqrt{\phi_{\En[x_ix_i']}(k)} - E[\sqrt{\phi_{\En[x_ix_i']}(k)}]| >r ) \leq 2 e^{ - n r^2/2},
$
which  proves the second claim of the lemma. \end{proof}

Next we combine the previous lemmas to control the empirical sparse eigenvalues of the following example.

\begin{example}[Correlated Normal Design]\label{Ex:Id}
Let us consider estimating the median ($u=1/2$) of the following regression model
$$ y = x'\beta_0 + \varepsilon, $$
where the covariate $x_1 =1$ is the intercept and the covariates $x_{-1} \sim N(0,\Sigma)$, where
$\Sigma_{ij} = \rho^{|i-j|}$ and $ -1< \rho < 1$ is fixed.
\end{example}

\begin{lemma}\label{Lemma:PhiNormal}
For $k \leq n$, under the design of Example 1 with $p \geq 2n$, we have
$$ \phi_{\En[x_ix_i']}(k) \lesssim_P \frac{1+|\rho|}{1-|\rho|} \(1 + \sqrt{\frac{k\log p}{n}}\) \ \ \mbox{and} \ \ \phi_{\En[x_ix_i']}(k) \gtrsim_P \frac{1-|\rho|}{1+|\rho|} \(1 + \sqrt{\frac{k\log p}{n}}\).$$
\end{lemma}
\begin{proof}
Consider first the design in Example 1 with $\rho = 0$.    Let $x_{i,-1}$ denote the ith observation without the first component. Write $$\En\[x_ix_i'\] = \[\begin{matrix} 1 & \En\[ x_{i,-1}'\] \\ \En\[ x_{i,-1}\] & { 0} \end{matrix} \]+ \En\[\begin{matrix} 0 & {0} \\ { 0} & \En\[ x_{i,-1}x_{i,-1}'\] \end{matrix}\] = M + N.$$

We first  bound $\phi_N(k)$. Letting $N_{-1,-1}= \En\[x_{i,-1}x_{i,-1}'\]$ we have $\phi_N(k) = \phi_{N_{-1,-1}}(k)$. Lemma \ref{Lemma:SparseEigvenNormal} implies that $ \phi_N(k) \lesssim_P 1+ \sqrt{k/n}\sqrt{\log p}$. Lemma \ref{Lemma:Sudakov} bounds $\phi_N(k)$ from below because $ \phi_{N_{-1,-1}}(k) \gtrsim_P \sqrt{{(k/2n)} \log(p-k)}$.

We then bound  $\phi_M(k)$. Since $M_{11} = 1$, we have $\phi_M(1) \geq 1$. To produce an upper bound let $w = (a,b')'$ achieve $\phi_M(k)$ where $a \in \RR$, $b \in \RR^{p-1}$. By definition we have $\|w\| =1$, $\|w\|_0 \leq k$. Note that $|a|\leq 1$, $\|b\| = \sqrt{1-|a|^2}\leq 1$, $\|b\|_1 \leq \sqrt{k}\|b\|$. Therefore
$$\begin{array}{rcl}
\phi_M(k) = w'Mw & = & a^2 + 2ab'\En\[ x_{i,-1}\] \leq  1 + 2b'\En\[ x_{i,-1}\]  \\
 &\leq& 1 + 2 \|b\|_1 \|\En\[ x_{i,-1}\]\|_\infty \leq 1 + 2\sqrt{k}\|b\|\|\En\[ x_{i,-1}\]\|_\infty.\end{array}$$
Next we bound $\|\En\[ x_{i,-1}\]\|_\infty = \max_{j=2,\ldots,p}| \En\[ x_{ij}\]|$. Since $ \En\[ x_{ij}\] \sim N(0,1/n)$ for $j=2,\ldots,p$, we have $\|\En\[ x_{i,-1}\]\|_\infty \lesssim_P \sqrt{(1/n)\log p}$. Therefore we have $ \phi_M(k) \lesssim_P 1 + 2\sqrt{k/n}\sqrt{\log p}$.

Finally, we bound  $\phi_{\En[x_ix_i']}$. Note that $\phi_{\En[x_ix_i']}(k) = \sup_{\alpha \in \mathbb{S}_p^k}\alpha'(M+N)\alpha = \sup_{\alpha \in \mathbb{S}_p^k} \alpha'M\alpha + \alpha'N\alpha \leq \phi_M(k) + \phi_N(k)$.
On the other hand, $\phi_{\En[x_ix_i']}(k) \geq 1 \vee \phi_{N_{-1,-1}}(k)$ since the covariates contain an intercept. The result follows by using the bounds derived above.

The proof for an arbitrary $\rho$ in Example 1 is similar. Since $-1<\rho<1$ is fixed, the bounds on the eigenvalues of the population design matrix $\Sigma$ are given by $\sigma^2(k)=\sup_{\alpha \in \mathbb{S}_p^k} \alpha'\Sigma\alpha \leq (1+|\rho|)/(1-|\rho|)$ and $\underline{\sigma}^2(k)=\inf_{\alpha \in \mathbb{S}_p^k} \alpha'\Sigma\alpha \geq \frac{1}{2}(1-|\rho|)/(1+|\rho|)$. So we can apply Lemmas \ref{Lemma:SparseEigvenNormal} and \ref{Lemma:Sudakov}. To bound $\phi_M(k)$ we use comparison theorems, e.g. Corollary 3.12 of \cite{LedouxTalagrandBook}, which allows for the same bound as for the uncorrelated design to hold.
\end{proof}

\section*{Acknowledgements}
We would like to thank Arun Chandrasekhar, Denis Chetverikov, Moshe
Cohen, Brigham Fradsen, Joonhwan Lee, Ye Luo, and Pierre-Andre Maugis
for thorough proof-reading of several versions of this paper and their
detailed comments that helped us considerably improve the paper.
We also would like to thank  Don Andrews, Alexandre Tsybakov, the editor
Susan Murphy, the Associate Editor, and three anonymous referees
for their comments that also helped us considerably improve the paper.
We would also like to thank the participants of seminars in Brown
University, CEMMAP Quantile Regression conference at UCL, Columbia
University, Cowles Foundation Lecture at the Econometric Society Summer
Meeting, Harvard-MIT, Latin American Meeting 2008 of the Econometric
Society, Winter 2007 North American Meeting of the Econometric Society,
London Business School, PUC-Rio, the Stats in the Chateau,  the Triangle
Econometrics Conference, and University College London.
%


\footnotesize \linespread{1}

\end{document}